\begin{document}
\newenvironment {proof}{{\noindent\bf Proof.}}{\hfill $\Box$ \medskip}

\newtheorem{theorem}{Theorem}[section]
\newtheorem{lemma}[theorem]{Lemma}
\newtheorem{condition}[theorem]{Condition}
\newtheorem{proposition}[theorem]{Proposition}
\newtheorem{remark}[theorem]{Remark}
\newtheorem{definition}[theorem]{Definition}
\newtheorem{hypothesis}[theorem]{Hypothesis}
\newtheorem{corollary}[theorem]{Corollary}
\newtheorem{example}[theorem]{Example}
\newtheorem{descript}[theorem]{Description}
\newtheorem{assumption}[theorem]{Assumption}

\newcommand{\ba}{\begin{align}}
\newcommand{\ea}{\end{align}}

\def\P{\mathbb{P}}
\def\R{\mathbb{R}}
\def\E{\mathbb{E}}
\def\N{\mathbb{N}}

\renewcommand {\theequation}{\arabic{section}.\arabic{equation}}
\def \non{{\nonumber}}
\def \hat{\widehat}
\def \tilde{\widetilde}
\def \bar{\overline}

\def\ind{{\mathchoice {\rm 1\mskip-4mu l} {\rm 1\mskip-4mu l}
{\rm 1\mskip-4.5mu l} {\rm 1\mskip-5mu l}}}

\title{\Large\ {\bf The Fleming-Viot limit of an interacting spatial population with fast density regulation}}

\author{Ankit Gupta
\thanks{ A part of this work was done when I was a graduate student at University of Wisconsin, Madison. I wish to sincerely thank my thesis adviser, 
Prof. Thomas G. Kurtz, for his continuing support and encouragement. This work was completed while I was holding a postdoctoral appointment under 
Prof. Sylvie M\'{e}l\'{e}ard and Prof. Vincent Bansaye at \'Ecole Polytechnique, Paris.  
The hospitality and support provided by them during my stay is gratefully acknowledged.
} 
\thanks{ This work was supported by the professoral chair Jean Marjoulet, the project MANEGE (Mod\`eles Al\'eatoires en \'Ecologie, G\'en\'etique et \'Evolution) 
of ANR (French national research agency) and the Chair Modelisation Mathematique et Biodiversite VEOLIA-\'Ecole Polytechnique-MNHN-F.X.} \\ \ 
Centre de Math\'ematiquees Appliqu\'ees \\ \'Ecole Polytechnique \\  UMR CNRS 7641 Route de Saclay\\ 91128 Palaiseau C\'{e}dex, France. \\ gupta@cmap.polytechnique.fr
}
\date{\today}
\maketitle
\begin{abstract}
We consider population models in which the individuals reproduce, die and also migrate in space. The population size scales according to some parameter $N$, which can have different interpretations depending on the context. Each individual is assigned a mass of $1/N$ and the total mass in the system is called \emph{population density}. The dynamics has an intrinsic density regulation mechanism that drives the population density towards an equilibrium. We show that under a timescale separation between the \emph{slow} migration mechanism and the \emph{fast} density regulation mechanism, the population dynamics converges to a Fleming-Viot process as the scaling parameter $N$ approaches $\infty$. We first prove this result for a basic model in which the birth and death rates can only depend on the population density. In this case we obtain a \emph{neutral} Fleming-Viot process. We then extend this model by including position-dependence in the birth and death rates, as well as, offspring dispersal and immigration mechanisms. We show how these extensions add \emph{mutation} and \emph{selection} to the limiting Fleming-Viot process. All the results are proved in a multi-type setting, where there are $q$ types of individuals interacting with each other. We illustrate the usefulness of our convergence result by discussing applications in population genetics and cell biology.
\end{abstract}

\noindent Keywords: spatial population; density dependence; Fleming-Viot process; cell polarity; population genetics; Wright-Fisher model; diffusion approximation\\ 
\noindent Mathematical Subject Classification (2010):  60J68; 60J85 ; 60G57; 60F99
\medskip

\section{Introduction}\label{introduction}

Density-dependent models are well-known in population biology. In these models, the birth and death rates of individuals may depend on the 
\emph{density} of the population, where the term density refers to the population size under a suitably chosen normalization.
Many models in ecology, epidemiology and immunology can be suitably described by such models (see Thieme \cite{Thieme}). 
Considering the molecules of chemical species as the individuals in a population, we can also view a chemical reaction network as a density-dependent population model.

Density-dependent models are appealing because one can easily account for interactions among individuals by appropriately specifying 
the birth and death rates as functions of the population density. 
For competitive interactions, as in the Lotka-Volterra model (see \cite{LotkaBook,Volterra,Nisbet}), the death rate increases with the 
population density while for cooperative interactions, as in the Allee model (see \cite{Alee}), the birth rate increases with the population 
density. Density dependent models are good candidates for modeling natural populations that cannot grow indefinitely due to 
the limited availability of certain vital resources or due to severe competition at large population sizes. 
By having the death rate dominate the birth rate at \emph{large} densities, one can ensure that the population density does not go beyond a certain threshold.

For a population having $q$ types of individuals, the population density is a $q$ dimensional vector whose $i$-th component gives the density of the 
population of the $i$-th type. For such a multi-type population, a density-dependent model can be written in the deterministic setting as a system of $q$ 
ordinary differential equations. If all the trajectories of this system stay within a compact set at all times, 
then we say that the population dynamics has a \emph{density regulation mechanism}. 
Such a mechanism is called \emph{equilibrating} if all the trajectories reach a fixed point for this system as time goes to infinity. In such a situation, this fixed point is 
called the \emph{equilibrium population density}.      

In this paper, we will consider population models in which the individuals live in a \emph{geographical region} $E$, that is a compact metric space. Even though we have a spatial structure, for us the population density will always denote the population size divided by a normalization parameter $N$. In other words, our notion of population density is \emph{global} in the sense that it carries no information about the distribution of individuals in $E$. This is unlike other models of spatial populations where the population density is a spatially varying function specifying the local concentration of individuals at each location. The normalization parameter $N$ will be a large positive integer which 
can have various interpretations depending on the context. In ecological models, $N$ can be taken to be the \emph{carrying capacity} of a habitat, which is the maximum 
number of individuals that the habitat can support with its resources. In epidemic models, $N$ is usually the total 
population size, while in chemical reaction networks, $N$ measures the \emph{volume} of the system. In each of these cases, the population size at any time is of order $N$.

For the moment assume that all the individuals have the same type and that they reproduce, die and also migrate in $E$. At the time of birth, the offspring gets the same location as its parent.
The population consists of approximately $N$ individuals of a \emph{mass} of $1/N$ each. The population 
density at any time is just the total mass of the individuals that are alive. 
Suppose that the birth and death rates of the individuals depend on the population density in such a way that they induce a density regulation mechanism 
which is equilibrating. We also assume that the migration mechanism operates at a timescale that is $N$ times 
\emph{slower} than the density regulation mechanism. In such a setting, we can view the dynamics of the empirical measure of the population as a measure-valued 
Markov process parameterized by $N$. Our goal is to understand how this family of Markov processes behaves as $N \to \infty$. The population dynamics has two 
timescales separated by $N$. If we observe the process at the \emph{fast} timescale, then the effect of migration vanishes in the limit and it is uninteresting to 
consider the population with a spatial structure in this case. Therefore we will observe the dynamics at the \emph{slow} timescale and examine its behaviour in the infinite population 
limit. Since the density regulation mechanism is \emph{fast}, it will have enough time to re-equilibrate the population density between any two events at the slow timescale.
Hence in the limit $N \to \infty$, we would expect the population density to remain equilibrated at all times. We will show that it is indeed the case. 
However our main task is to understand the dynamics of the spatial distribution of the population in the infinite population limit. We will prove that in the limit, 
the spatial distribution of the population evolves according to a Fleming-Viot process which takes values in the space of probability measures over $E$. 
This process was introduced in the context of population genetics by Fleming and Viot \cite{FV} in 1979 and it has been very well-studied since then. An excellent survey
of Fleming-Viot processes is given by Ethier and Kurtz \cite{EK93}. The model we just described will be called the \emph{basic model.}
In this model, the birth and death rates of an individual were density dependent, but independent of the location of the individual. It shall be seen later that the 
limiting Fleming-Viot process in this case is \emph{neutral}, in the parlance of population genetics.  
If we add \emph{small} position-dependent terms to the birth and death rates, then in the limit we obtain a Fleming-Viot process with \emph{genetic selection}. Perhaps 
unsurprisingly, altering the birth rate this way leads to \emph{fecundity selection}, while altering the death rate leads to \emph{viability selection}, in the limiting Fleming-Viot process. We also consider extensions of the basic model by allowing for \emph{offspring dispersal} (offspring is born away from the parent) or 
\emph{immigration}. Such extensions add extra \emph{mutations} to the limiting Fleming-Viot process.
     
The results mentioned in the previous paragraph are proved in a multi-type setting. The population has $q$ types of individuals and  
each type of individual can give birth to an individual of each type. All the individuals are migrating in $E$ according to a type-dependent mechanism. 
Now we can view the joint dynamics of the empirical measures of the $q$ sub-populations as a Markov process parameterized by $N$. 
We make similar assumptions on the dynamics as before.
Again in the limit $N \to \infty$, the population density (which is now a $q$-dimensional vector) stays at an equilibrium at all times. 
Assuming the irreducibility of an underlying interaction matrix, we show that in the limit all the $q$ sub-populations become \emph{spatially inseparable}. 
This means that on any patch of $E$, either there is no mass present or there is mass of each type present in a proportion determined by the 
equilibrium density. Moreover the spatial distribution of each of the $q$ sub-populations evolves according to a single Fleming-Viot process. 
This Fleming-Viot process can be seen as describing the limiting dynamics of a \emph{mixed} population, formed by taking a suitable density-dependent 
convex combination of the $q$ sub-populations.

In ecological models, the individuals need resources to survive and reproduce.
Normally in spatial population models, resources are assumed to have a fixed distribution in space. 
As individuals move, they find the unexploited resources and compete for them \emph{locally} with other individuals present in their neighbourhood. 
Such a model is different from the models we consider in two ways. 
Firstly, due to the local nature of the interactions, the density is locally regulated rather than globally regulated as in our models. 
Secondly, since the discovery of resources is tied to the movement of individuals, it is reasonable to assume that both migration and birth-death mechanisms operate at the same 
timescale. For such spatial models, Oelschl\"ager \cite{Oelschlager} has shown in a multi-type setting that the dynamics converges in the infinite population limit to a 
system of reaction-diffusion partial differential equations. Such equations are in widespread use in biology (see Fife \cite{rdpde1}). 
We now discuss the conditions under which our models can be useful.  
Consider a situation where the resource is not fixed but rapidly mixing in the whole space. This resource is shared by all the individuals in the population. An individual may deplete the resource locally but its effect is felt globally due to the rapid mixing. This gives rise to global density dependence in a spatial population. If the individuals move very slowly in comparison to their resource consumption mechanism (which is linked to their birth and death mechanisms), then we have a situation in which our models can be used. 

This paper is motivated by our earlier work \cite{GUPTA} in which we study the phenomenon of cell polarity using a model considered here. Cell polarity refers to the clustering of molecules on the cell membrane. This clustering is essential to trigger various other cellular processes, such as bud formation \cite{AAWWref9} or immune response \cite{Weiner}. Therefore understanding how cells establish and maintain polarity is of vital importance. In \cite{AAWW}, Altschuler et. al. devised a mathematical model for this phenomenon, by abstracting the mechanisms that are commonly found in cells exhibiting polarity. Their model has a fixed number of molecules that can either reside on the membrane or in the cytosol. These molecules move slowly on the membrane but diffuse rapidly in the cytosol. The dynamics has a \emph{positive feedback} mechanism which allows a membrane molecule to pull a cytosol molecule to its location on the membrane. This mechanism is like a birth process in which a membrane molecule gives birth by exploiting the common resource (cytosol molecules) shared by all the membrane molecules. Since the migration of membrane molecules is \emph{slow} and the mixing of the resource is \emph{fast}, this model can be viewed as a model described in this paper (see Section \ref{sec:cellpolarity} for details). Therefore the results in this paper are applicable and we obtain a Fleming-Viot process in the infinite population limit. In \cite{GUPTA} we prove this convergence\footnote{This convergence was proved in \cite{GUPTA} using the technique of particle representation described in \cite{DK99}. This technique cannot be easily extended to the multi-type setting of this paper. Therefore the convergence proof in this paper is vastly different.} and use the limiting process to answer some interesting questions about the onset and structure of cell polarity. The model studied in \cite{GUPTA} is rather simplistic as all the molecules are assumed to be identical. Most cells that exhibit polarity have molecules of many different types participating in the feedback mechanism and migrating on the membrane in different ways (see \cite{DN,AAWWref9,Takaku}). It is natural to ask if the Fleming-Viot convergence is valid in this general framework. The results in this paper show that it is indeed the case as long as certain basic elements of the dynamics are preserved. This ensures that the analysis in \cite{GUPTA} can be extended to more complicated (and realistic !) models for cell polarity. We discuss this example further in Section \ref{sec:cellpolarity}.

Note that the geographical space $E$ can be considered as the space of \emph{genetic traits}. This casts our models into the setting of population genetics. The spatial migration can be seen as \emph{mutation} that may happen at any time during the life of an individual, while the offspring dispersal mechanism is like \emph{mutation} that can only happen at the time of birth of an offspring. We assume that the reproduction is \emph{clonal} in the absence of mutation. The position-dependent birth and death mechanism is analogous to the \emph{selection} mechanism in population genetics. Hence it is not surprising that spatial migration, offspring dispersal and position-dependent birth and death mechanisms correspond to mutation and selection in the limiting Fleming-Viot process. What is more interesting is that the \emph{sampling mechanism} arises naturally from our models in the infinite population limit. This sampling mechanism is a key feature of the standard models in population genetics, such as the Wright-Fisher model, the Moran model and their variants (see \cite{Wright,Moran,EwensBook}). This mechanism makes the models tractable by keeping the population size constant. It is done by matching the birth of an individual with the death of another individual chosen uniformly from the population. It is obvious that such a mechanism is quite unrealistic, at least for finite populations which are naturally fluctuating. However our Fleming-Viot convergence result shows that one can recover this sampling mechanism in the infinite population limit if the dynamics has an equilibrating density regulation mechanism that acts at a faster timescale than other events. It is well-known that a  Fleming-Viot process arises in the infinite population limit of an appropriately scaled version of the Wright-Fisher or the Moran model (see \cite{FV} and \cite{EK93}). Therefore if all the individuals have the same type (that is, $q=1$) and $h_{\textnormal{eq}}$ is the equilibrium population density, then for a large (but finite) value of the scaling parameter $N$, our models will have \emph{roughly} the same dynamical behaviour as a suitably chosen Wright-Fisher or Moran model with the constant population size $N h_{\textnormal{eq}}$. This insight provides a justification for the assumption of a constant population size in population genetics models. Most of the mathematical literature on population genetics is concerned with two types of questions. In the absence of mutation, one wants to know the probability and the time of \emph{fixation} of a particular genetic trait. The term \emph{fixation} describes the event in which the whole population has the same genetic trait. In the presence of mutation, one attempts to investigate the properties of the stationary distribution, if such a distribution is present. These questions are difficult to answer for finite populations and one typically answers them by studying the limiting Fleming-Viot process. Our discussion shows that for large $N$, the fixation times and probabilities or the stationary distribution will be approximately the same for our model and the corresponding Wright-Fisher or Moran model. We illustrate this point through an example in Section \ref{sec:log1}.

This paper is organized as follows. In Section 2 we describe the mathematical models that we consider and state our main results. 
In Section 3 we discuss the aforementioned  applications of our results in greater detail. Finally in Section 4 we prove the main results.

\section*{Notation}
We now introduce some notation that we will use throughout this paper. Let $\R$, $\R_+$, $\R_{*}$, $\N$ and $\N_{0}$ denote the sets of all reals, nonnegative reals, positive reals, positive integers and nonnegative integers respectively. For any $a,b \in \R$, their minimum is given by $a \wedge b$. 

Let $\left\| \cdot\right\|$ and $\langle \cdot , \cdot\rangle$ denote the standard Euclidean norm and inner product in $\R^n$ for any $n \in \N$. Moreover for 
any $v = (v_1,\dots,v_n)\in \R^n$, the norms $\left\| v\right\|_1 $ and $\left\| v\right\|_{\infty}$ are defined as
$\left\| v\right\|_1 = \sum_{i=1}^n |v_i|$ and $\left\| v\right\|_{\infty} = \max_{1 \leq i \leq n} |v_i|$. 
The vectors of all zeros and all ones in $\R^n$ are denoted by $\bar{0}_n$ and $\bar{1}_n$ respectively.
Let $\mathbb{M}(n,n)$ be the space of all $n \times n$ matrices with real entries. For any 
$M \in \mathbb{M}(n,n)$, the entry at the $i$-th row and the $j$-th column is indicated by $M_{ij}$. Its infinity norm is defined as
$ \left\| M \right\|_{\infty} = \max_{1 \leq i \leq n} \sum_{j=1}^{n} |M_{ij}|$
and its transpose and inverse are indicated by $M^{T}$ and $M^{-1}$ respectively. 
The symbol $I_n$ refers to the identity matrix in 
$\mathbb{M}(n,n)$. For any $v = (v_1,\dots,v_n) \in \R^n$, $\textrm{Diag}(v)$ refers to the matrix in $\mathbb{M}(n,n)$ whose non-diagonal entries are all $0$ and 
whose diagonal entries are $v_1,\dots,v_n$. A matrix in $\mathbb{M}(n,n)$ is called \emph{stable} if all its eigenvalues have strictly negative real parts.
While multiplying a matrix with a vector we always regard the vector as a column vector. 

Let $U \subset \R^n$ and $V \subset \R^m$. Then for any $k \in \N_0$, the class $C^k(U,V)$ refers to the set of all those functions $f$ that are defined on some open 
set $O \subset \R^n$ containing $U$ such that $f(x) \in V$ for all $x \in U$ and $f$ is $k$-times continuously differentiable at any $x \in O$.

Let $(S,d)$ be a metric space. Then by $B(S) \left( C(S) \right) $ we refer to the set of all bounded (continuous) real-valued Borel measurable functions. 
If $S$ is compact, then $C(S) \subset B(S)$ and both $B(S)$ and $C(S)$ are Banach spaces under the sup norm  $\left\| f \right\|_\infty= \sup_{x \in S } \left|  f(x)  \right|$. Recall that a class of functions in $B(S)$ is called an \emph{algebra} if it is closed under finite sums and products.
Let $\mathcal{B}(S)$ be the Borel sigma field on $S$. By $\mathcal{M}_F(S)$ and $\mathcal{P}(S)$ we denote the space of all finite positive Borel measures 
and the space of all Borel probability measures respectively. These measure spaces are equipped with the weak topology. For any $f \in B(S)$ and $\mu \in \mathcal{M}_F(S)$ 
let 
\[\langle f, \mu \rangle = \int_{E} f(x)\mu(dx).\] 

The space of cadlag functions (that is, right continuous functions with left limits) from $[0,\infty)$ to $S$ is denoted by $D_{S} [0,\infty)$ and it is endowed with the Skorohod topology (for details see Chapter 3, Ethier and Kurtz \cite{EK}). 
The space of continuous functions from $[0,\infty)$ to $S$ is denoted by $C_{S} [0,\infty)$ and it is endowed with the topology of uniform convergence over compact sets.
An operator $A$ on $B(S)$ is a linear mapping that maps any function in its domain $\mathcal{D}(A) \subset B(S)$ to a function in $B(S)$.
The notion of the \emph{martingale problem} associated to an operator $A$ is introduced and developed in Chapter 4, Ethier and Kurtz \cite{EK}. 
In this paper, by a solution of the martingale problem for $A$ we mean a measurable stochastic process $X$ with paths 
in $D_{S} [0,\infty)$ such that for any $f \in \mathcal{D}(A)$,
\[f(X(t)) - \int_{0}^{t} A f(X(s))ds\]
is a martingale with respect to the filtration generated by $X$. 
For a given initial distribution $\pi \in \mathcal{P}(S)$, a solution $X$ of the martingale problem for $A$ is a solution of the martingale problem for $(A,\pi)$ if $\pi = \P X(0)^{-1}$. 
If such a solution $X$ exists uniquely for all $\pi \in \mathcal{P}(S)$, then we say that the martingale problem for $A$ is well-posed. Additionally, we say that 
$A$ is the generator of the process $X$.

Throughout the paper $\Rightarrow$ denotes convergence in distribution.

\section{Model descriptions and the main result}

Our first task is to describe the models that we consider in the paper. 
As mentioned in Section \ref{introduction}, we model a population which resides in some compact metric space $E$ and in which the individuals have one of $q$ possible types. We denote these types by elements in the set $Q = \{1,2,\dots,q\}$. 
We identify each individual located at $x \in E$ with 
the Dirac measure $\delta_x$, concentrated at $x$. Moreover each individual is assigned a \emph{mass} of $1/N$ where $N \in \N$ is our scaling parameter. 
For any $i \in Q$, the population of type $i$ individuals can be represented by an atomic measure of the form
\begin{align*}
\mu_i = \frac{1}{N} \sum_{ j = 1}^{n_i} \delta_{x^i_j}, 
\end{align*}
where $n_i$ is the total number of type $i$ individuals and $x^i_1,\dots,x^i_{n_i} \in E$ are their locations. Define the space of atomic measures scaled by $N$ as 
\begin{align}
\label{defnmna}
\mathcal{M}_{N,a} ( E ) = \left\{ \frac{1}{N} \sum_{ j = 1}^{n} \delta_{x_j} : n \in \mathbb{N}_0 \textrm{ and } x_1,\dots,x_n \in E \right\}.
\end{align}
Note that $\mathcal{M}_{N,a} (E) \subset \mathcal{M}_F(E)$, where $ \mathcal{M}_F(E)$ is the space of finite positive measures. 
Let $\mathcal{M}^q_{N,a}(E)$ and $\mathcal{M}^q_{F} (E)$ be the spaces formed by taking products of 
$q$ copies of $\mathcal{M}_{N,a} (E)$ and $\mathcal{M}_{F} (E)$ respectively. Since for each $i \in Q$, the type $i$ population can be represented by a 
measure $\mu_i \in \mathcal{M}_{N,a}(E)$ , the entire population can be represented by a $q$-tuple of measures 
$\mu = (\mu_1,\dots,\mu_q) \in \mathcal{M}^q_{N,a} (E)$. 

Let $1_E$ denote the constant function in $C(E)$ which maps each point in $E$ to $1$.
Define the \emph{density map} $H : \mathcal{M}_F^q(E) \to \R^q_{+}$ as the continuous function given by
\begin{align}
\label{defn:densitymap}
H(\mu_1,\dots,\mu_q) = \left( \langle 1_E, \mu_1\rangle, \dots,\langle 1_E, \mu_q\rangle  \right) \textrm{ for any } \mu = (\mu_1,\dots,\mu_q) \in \mathcal{M}_F^q(E).  
\end{align}
We will refer to $h = H(\mu)$ as the density vector corresponding to $\mu \in \mathcal{M}_F^q(E)$. Note that if the population is represented 
by a $\mu \in \mathcal{M}^q_{N,a} (E)$ and if $h = (h_1,\dots,h_q)$ is the corresponding density vector, then $h_i$ is just the total number of type $i$ individuals 
divided by $N$. The density vector $h$ contains no information about the distribution of individuals on $E$.

\subsection{The type-dependent migration mechanism }
\label{migrationmechanism}
In our models, each individual of type $i \in Q$ will migrate according to an independent $E$-valued Markov process with generator $B_i$. 
We will assume that each operator $B_i$ generates a \emph{Feller} semigroup on $C(E)$ (see Chapter 4 in Ethier and Kurtz \cite{EK}). 
Furthermore we assume that there is an algebra of functions 
$\mathcal{D}_0 \subset C(E)$ which is dense in $C(E)$, contains $1_E$ and satisfies
\begin{align}
\label{defn:classD0}
\mathcal{D}_0 \subset \mathcal{D}(B_i) \textrm{ for all } i \in Q. 
\end{align}
The martingale problem corresponding to each $B_i$ is well-posed and any solution is a strong Markov process with sample paths in $D_E[0,\infty)$ 
(see Theorem 4.2.7 and Corollary 4.2.8 in \cite{EK}).

We now formally describe how this type-dependent migration of individuals translates into the evolution of our population in the space $\mathcal{M}^q_{N,a} (E)$.
 For each $n \in \N$, define a space of atomic probability measures as
\begin{align*}
\mathcal{P}_{n,a} = \left\{ \frac{1}{n} \sum_{j =1 }^{n} \delta_{x_j} : x_1,\dots,x_n \in E \right\}
\end{align*}
and a class of continuous real-valued functions over $\mathcal{P}(E)$ by
\begin{align}
 \label{defc0}
\mathcal{C}_0 = \left\{ F(\nu) = \prod_{l = 1}^{m} \langle f_l, \nu \rangle \ : \ f_1,\dots,f_m \in \mathcal{D}_0  \textrm{ and } m \in \N \right\}.
\end{align}
Suppose that $\nu = (1/n) \sum_{j=1}^n \delta_{x_j} \in \mathcal{P}_{n,a}$ and $F(\nu) = \prod_{l = 1}^{m} \langle f_l, \nu \rangle \in \mathcal{C}_0$. For positive integers $k \leq m$, let $P^m_k$ be the set of onto functions from $\{1,\dots,m\}$ to $\{1,\dots,k\}$ and for any $p \in P^m_k$ and $l = 1,\dots,k$ let
\begin{align}
\label{fplform}
f^{(p)}_l(x) = \prod_{ j \in p^{-1}(l)} f_j(x). 
\end{align}
Then we can write
\begin{align}
\label{Fformabove}
F(\nu)  = n^{-m} \prod_{l = 1}^m \left( \sum_{j=1}^n f_l(x_j) \right) & = n^{-m} \sum_{i_1,\dots,i_m = 1}^{n} \prod_{l=1}^{m} f_l(x_{i_l}) \notag \\
& = n^{-m} \sum_{k=1}^m \sum_{p \in P^m_k} \sum_{j_1 ,\dots, j_k = 1 }^n  \prod_{l = 1}^{k} f^{(p)}_l(x_{j_l}),
\end{align}
where the last term has summation over distinct choices of $j_1 ,\dots, j_k \in \{1,\dots,n\}$. For each $i \in Q, n \in \N$ we now define an operator 
$\mathbf{B}^n_i : \mathcal{D}(\mathbf{B}^n_i) = \mathcal{C}_0 \to B\left( \mathcal{P}_{n,a}\right)$ by 
\begin{align}
 \label{defbni}
\mathbf{B}^n_i F(\nu) = n^{-m} \sum_{k=1}^m \sum_{p \in P^m_k} \sum_{j_1 ,\dots, j_k = 1 }^n  \sum_{l = 1}^{k}  B_i f^{(p)}_l(x_{j_l}) 
\prod_{r = 1, r \neq l}^{k} f^{(p)}_r(x_{j_r}), 
\end{align}
where $F \in  \mathcal{C}_0$ is given by \eqref{Fformabove}. Observe that any $F \in \mathcal{C}_0$ is bounded and
\begin{align}
\label{boundednessofbni}
\sup_{n \in \N} \sup_{\nu \in \mathcal{P}_{n,a}} |\mathbf{B}^n_i F(\nu) | < \infty. 
\end{align}
One can easily verify that the martingale problem for each $\mathbf{B}^n_i$ is well-posed. 
If $\nu_0 = (1/n) \sum_{j=1}^n \delta_{x_j} \in \mathcal{P}_{n,a}$ then the solution of the martingale problem for 
$(\mathbf{B}^n_i, \delta_{\nu_0})$ is just the empirical measure process of a system of $n$ individuals moving in $E$ according to independent Markov processes with 
generators $B_i$ and initial positions $x_1,\dots,x_n$. For more details see Section 2.2 in Dawson\cite{DawsonEcole}.

For any $f_1,\dots,f_m \in \mathcal{D}_0$ consider a function $\hat{F} : \mathcal{M}^q_F(E) \to \R$ of the form
\begin{align}
\label{hatfform}
\hat{F}(\mu) = \prod_{j=1}^{m} \left( \sum_{i = 1}^q  c_i(h) \langle f_j, \mu_i \rangle \right), 
\end{align}
where $h = H(\mu)$ and $c : \R^q_{+} \to  \R^q_{+}$ is a function that satisfies
\begin{align}
\label{coefficientcondition}
\sup_{h \in \R^q_{+}} \langle c(h),h\rangle < \infty. 
\end{align}
Such a function $\hat{F}$ is bounded because
\begin{align*}
\sup_{\mu \in \mathcal{M}^q_F(E)} |\hat{F} (\mu)| \leq \left( \max_{l =1,\dots,m} \|f_l\|_{\infty} \right)^m \left( \sup_{h \in \R^q_{+}} \langle c(h),h\rangle \right)^m.
\end{align*}
Define a class of functions by
\begin{align}
\label{classc0q}
 \mathcal{C}^q_0  = \left\{ \hat{F} \in B\left( \mathcal{M}^q_F(E)\right) \textrm{ : } \hat{F} \textrm{ is given by } \eqref{hatfform} \textrm{ and } c 
\textrm{ satisfies } \eqref{coefficientcondition} \right\}.
\end{align}

If $\mu = (\mu_1,\dots,\mu_q) \in \mathcal{M}^q_{N,a}(E)$, then for each $i \in Q$, $\mu_i$ has the form $\mu_i = (1/N) \sum_{j=1}^{n_i} \delta_{x^i_j}$. 
Let $\nu_i = (1/n_i) \sum_{j=1}^{n_i} \delta_{x^i_j}$ if $n_i >0$ and $\nu_i = \delta_{x_0}$ if $n_i = 0$, where $x_0 \in E$ is arbitrary. 
Let $P^m_k$ be as before. Pick a $\hat{F} \in \mathcal{C}^q_0 $ of the form \eqref{hatfform}. 
For any $p \in P^m_k$ and $l = 1,\dots,k$ define $F^{(p)}_{l} \in \mathcal{C}_0$ by
$F^{(p)}_{l}(\nu) =  \langle f^{(p)}_l, \nu \rangle$,
where $f^{(p)}_l$ is given by \eqref{fplform}. We can write the function  $\hat{F}$ as 
\begin{align*}
\hat{F}(\mu) &=   \sum_{ i_1,\dots, i_m = 1}^{q} \prod_{j=1}^{m} c_{i_j}(h) \langle f_j, \mu_{i_j} \rangle  = \sum_{ i_1,\dots, i_m = 1}^{q} \prod_{j=1}^{m} ( c_{i_j}(h) h_{i_j} )\langle f_j, \nu_{i_j} \rangle \\
& = \sum_{k=1}^{m} \sum_{p \in P^m_k} 
\sum_{ l_1,\dots,l_k = 1 }^q (c_{l_1}(h) h_{l_1})^{|p^{-1}(1)|}\dots (c_{l_k} (h) h_{l_k})^{|p^{-1}(k)|} \prod_{r = 1}^{k} F^{(p)}_r(\nu_{l_r}),
\end{align*}
where the last term has summation over distinct choices of $l_1 ,\dots, l_k \in \{1,\dots,q\}$. 
Let $\mathbf{B}^N : \mathcal{D}(\mathbf{B}^N) = \mathcal{C}^q_0  \to B\left( \mathcal{M}^q_{N,a}(E)\right)$ be the operator whose action on any 
$\hat{F} \in \mathcal{C}^q_0$ written in this form is given by
\begin{align}
\label{maindefbn}
\mathbf{B}^N \hat{F}(\mu) = & \sum_{k=1}^{m} \sum_{p \in P^m_k} 
\sum_{ l_1,\dots,l_k = 1 }^q (c_{l_1}(h) h_{l_1})^{|p^{-1}(1)|}\dots (c_{l_k}(h) h_{l_k})^{|p^{-1}(k)|} \notag \\
& \times \sum_{r = 1}^{k} \mathbf{B}^{n_{l_r}}_{l_r} F^{(p)}_r(\nu_{l_r}) \prod_{j = 1, j \neq r}^{k} F^{(p)}_j(\nu_{l_j}),
\end{align}
where for any $n \in \N, i \in Q$, the operator $\mathbf{B}^n_i$ is given by \eqref{defbni}. For convenience $\mathbf{B}^0_i$ is defined to be the identity map on 
$\mathcal{C}_0$. The function $\mathbf{B}^N \hat{F}$ is bounded due to \eqref{boundednessofbni} and \eqref{coefficientcondition}.

The martingale problem for $\mathbf{B}^N$ is well-posed because the martingale problem for $\mathbf{B}^n_i$ is well-posed for each $i \in Q , n \in \N$. 
To see this suppose that $\mu_0 = (\mu_{0,1},\dots,\mu_{0,q}) \in  \mathcal{M}^q_{N,a}(E)$ and for each $i \in Q$, $\mu_{0,i} \in \mathcal{M}_{N,a}(E)$ has the form
$\mu_{0,i} = (1/N) \sum_{j=1}^{n_i} \delta_{x^i_{j}}.$  
If $n_i>0$ let $\nu_{0,i} = (1/n_i) \sum_{j=1}^{n_i} \delta_{x^i_{j}}$ and if $n_i = 0$ let $\nu_{0,i} = \delta_{x_0}$ for some arbitrary $x_0 \in E$. 
For each $i \in Q$, let $\{ \nu_i(t) : t \geq 0\}$ be the unique solution of the martingale problem for $(\mathbf{B}^{n_i}_i, \delta_{\nu_{0,i}})$. Then the process $\{ \mu(t) : t \geq 0  \}$ given by
\begin{align*}
\mu(t) = \left( \frac{n_1}{N} \nu_1(t),\dots, \frac{n_q}{N} \nu_q(t)\right), 
\end{align*}
is the unique solution to the martingale problem for $(\mathbf{B}^N,\delta_{\mu_0})$.

\subsection{The density regulation mechanism}\label{density_regulation_mechanism}

We mentioned in Section \ref{introduction} that in our models, the birth and death rates of individuals depend on the population density in such a way that 
they induce an equilibrating density regulation mechanism. We now describe this mechanism in our multi-type setting.

For each $i,j \in Q $, let $\beta_{ij}$ be a bounded function in $C^2(\R^q_{+},\R_+)$ and for each $i \in Q $ let $\rho_i$ be any function in $C^2(\R^q_{+},\R_+)$. 
For any $h \in \R^q_+$, let the matrix $A(h) \in \mathbb{M}(q,q)$ and the vector $\theta(h) \in \R^q$ be given by
\begin{align}
\label{defninteractionmatrix}
A_{ij}(h) & = \left\{
\begin{array}{cc}
\beta_{ji}(h) & \textrm{ if }  i \neq j  \\
\beta_{ii}(h) - \rho_i(h) & \text{ otherwise } \\
\end{array}
\right\},
\end{align}
and
\begin{align}
\label{defnthetah}
\theta(h) = A(h)h.
\end{align}

Consider $\theta$ to be a vector field over $\R^q_+$. Observe that if $h \in \R^q_+$ is such that $h_i = 0$ then 
$\theta_i(h) \geq 0$. This shows that any solution to the initial value problem
\begin{align}
\label{firstodeivp}
\frac{d h}{d t } = \theta(h), \quad h(0) = h_0 \in \R^q_+
\end{align}
stays inside $\R^q_+$ for all positive times for which it is defined. Standard existence and uniqueness theorems imply that for any $h_0 \in \R^q_+$, 
there is a solution $h(t)$ of $\eqref{firstodeivp}$ defined on some maximal time interval $[0,a)$. Moreover if $a < \infty$ 
then $\left\| h(t)\right\|_1 \to \infty$ as $t \to a_{-}$. Since $\beta_{ij}$ is bounded for each $i,j \in Q$, there is a positive constant $C_\theta$ such that
\begin{align}
\label{controlovertheta}
\sum_{i=1}^q \theta_i(h) \leq \sum_{i,j=1}^q \beta_{ji}(h)h_j \leq C_{\theta} \|h\|_1 \textrm{ for all } h \in \R^q_+
\end{align}
and hence 
\begin{align}
\label{ripeforgronwall}
\frac{d \left\| h(t)\right\|_1}{dt} = \sum_{i=1}^q \frac{d h_i(t)}{dt} = \sum_{i = 1}^{q} \theta_i(h(t)) \leq C_{\theta} \|h(t)\|_1.
\end{align}
Therefore using Gronwall's inequality and \eqref{ripeforgronwall} we obtain that $\left\|h(t)\right\|_1 \leq \left\|h(0)\right\|_1 e^{C_\theta t}$ for all $t \in [0,a)$ 
and so $\left\| h(t)\right\|_1$ cannot go to $\infty$ as $t \to a_{-}$. Thus $a = \infty$ and this shows that for any $h_0 \in \R^q_+$, the initial value problem 
\eqref{firstodeivp} has a unique solution which is defined for all $t \geq 0$. 

Let $\psi_\theta :\R^q_+ \times \R_+ \to  \R^q_+$ be the \emph{flow} associated to the vector field $\theta$. This means that $\psi_\theta$ satisfies 
\begin{align}
\label{flowequation}
\psi_\theta(x,t) = x + \int_{0}^{t} \theta(\psi_\theta(x,s))ds \ \textrm{    for all } (x,t) \in \R^q_+ \times \R_+.
\end{align}
This flow is well-defined because of the arguments given in the preceding paragraph. In fact since $\theta$ is in $C^2(\R^q_+,\R^q)$, the map $\psi_\theta$ is in 
$C^2(\R^q_+ \times \R_+,\R^q_+)$. This map also satisfies the semigroup property
\begin{align}
\label{semigroup_property}
\psi_\theta(x,t+s) = \psi_{\theta}( \psi_\theta(x,t) ,s) \ \textrm{ for all } x\in \R^q_+ \textrm{ and } s,t \in \R_+. 
\end{align}
We will say that a set $U \subset \R^q_+$ is $\psi_\theta$-invariant if for all $t \geq 0$, $\psi_\theta(U,t) \subset U $ where 
$\psi_\theta(U,t)= \{\psi_\theta(x,t) : x \in U\}$.
Before we proceed, we need to make some more assumptions.
\begin{assumption}
\label{mainassumptionsonr}
\begin{itemize}
\item[(A)] There exists a vector $h_{\textnormal{eq}} \in \R^q_{+}$, $h_{\textnormal{eq}} \neq \bar{0}_q$ such that $\theta(h_{\textnormal{eq}})= A(h_{\textnormal{eq}}) h_{\textnormal{eq}} = \bar{0}_q$.
\item[(B)] The Jacobian matrix $[J\theta(h_{\textnormal{eq}})]$ is stable.
\item[(C)] The matrix $A(h_{\textnormal{eq}})$ is irreducible, that is, there does not exist a permutation matrix $P \in \mathbb{M}(q,q)$ 
such that the matrix $P A(h_{\textnormal{eq}}) P^{-1}$ is block upper-triangular.
\item[(D)] For each $i,j \in Q$, the functions $\rho_i$ and $\beta_{ij}$ are analytic at $h_{\textnormal{eq}}$, that is, 
they agree with their Taylor series expansion in a neighbourhood of $h_{\textnormal{eq}}$. 
\end{itemize}
\end{assumption}
Part (A) says that there is a nonzero vector $h_{\textnormal{eq}} \in \R^q_{+}$ which is a fixed point for the flow $\psi_\theta$. 
Part (B) implies that this fixed point $h_{\textnormal{eq}}$ is \emph{asymptotically stable} for this flow. 
The significance of part (C) will become clear later in this section. Part (D) is a technical condition that we require to prove our main result.

We define the region of attraction of the fixed point $h_{\textnormal{eq}}$ for the flow $\psi_\theta$ by
\begin{align}
\label{defueq}
U_{\textnormal{eq}} = \left\{ h \in \R^q_+ : \lim_{t \to \infty} \psi_\theta(h,t) = h_{\textnormal{eq}}  \right\}. 
\end{align}
Part (B) of Assumption \ref{mainassumptionsonr} and Lemma 3.2 in Khalil \cite{Khalil}, ensure that $U_{\textnormal{eq}}$ is a $\psi_\theta$-invariant open set in 
$\R^q_+$. Note that $U_{\textnormal{eq}}$ may not be an open set in $\R^q$.

We are now ready to describe the density regulation mechanism. 
Suppose that when the population density vector is $h \in \R^q_+$, then for each $i,j \in Q$, 
an individual of type $i$ gives birth to an individual of type $j$ at rate $\beta_{ij}(h)$ and an individual of type $i$ dies at rate $\rho_i(h)$. 
At the time of birth, the offspring is placed at the same location in $E$ as its parent. Note that the birth and death rates do not depend 
on the location of the individuals. If the scaling parameter is $N \in \N$ and the mass of each individual is $1/N$, then we can view this 
density-dependent population dynamics as a Markov process over the state space $\mathcal{M}^q_{N,a}(E)$ with generator 
$\mathbf{R}^N : \mathcal{D}(\mathbf{R}^N) = B(\mathcal{M}^q_{N,a}(E)) \to B(\mathcal{M}^q_{N,a}(E))$ defined as follows.
For any $F \in B(\mathcal{M}^q_{N,a}(E))$ and any $\mu \in \mathcal{M}^q_{N,a}(E)$ with $h = H(\mu)$
\begin{align}
\label{defroperatorrn}
\mathbf{R}^N F(\mu) &= \sum_{i, j \in Q } N \int_{E} \beta_{ij}(h) \left( F \left( \mu +\frac{1}{N} \delta^j_{x} \right) - F(\mu) \right)\mu_i(dx)  \\
& +    \sum_{i \in Q }N \int_{E } \rho_{i}(h) \left( F \left( \mu -\frac{1}{N} \delta^i_{x} \right) - F(\mu) \right)\mu_i(dx), \notag
\end{align}
where for any $\mu= (\mu_1,\dots,\mu_q) \in \mathcal{M}^q_{N,a}(E)$, $j \in Q$ and $x \in E$ 
\begin{align*}
\mu \pm \frac{1}{N} \delta^j_x  = \left( \mu_1,\dots,\mu_{j-1}, \mu_j \pm \frac{1}{N} \delta_x, \mu_{j+1},\dots,\mu_q \right).
\end{align*}

Concrete results on the well-posedness of the martingale problem for $\mathbf{R}^N$ will come later. 
First let us understand how this operator $\mathbf{R}^N$ drives the population density to an equilibrium value. Let
$\{ \mu^N(t) : t \geq 0\}$ be a $\mathcal{M}^q_{N,a}(E)$-valued Markov process with generator $\mathbf{R}^N$ and let $\{ h^N(t) : t \geq 0\}$ be the corresponding density process defined by 
$h^N(t) = H(\mu^N(t))$. Assume that $h^N(0) \to h_0$ a.s.\ as $N \to \infty$ and $h_0$ is some vector in $U_{\textnormal{eq}}$. From Theorem 11.2.1 in \cite{EK}, one can conclude that as $N \to \infty$, $\{h^N(t) : t \geq 0\}$ converges in the Skorohod topology in $D_{\R^q}[0,\infty)$ to the process $\{ \psi_\theta(h_0,t) : t \geq 0\}$. 
Since $h_0 \in U_{\textnormal{eq}}$, $\psi_\theta(h_0,t) \rightarrow h_{\textnormal{eq}}$ as $t \to \infty$ which indicates that for a large $N$, 
the density process $h^N(\cdot)$ gets closer and closer to $h_{\textnormal{eq}}$ with time. This shows, at least informally, that the operator $\mathbf{R}^N$ drives the population 
density towards $h_{\textnormal{eq}}$. Henceforth we shall refer to the vector $h_{\textnormal{eq}}$ as the \emph{equilibrium population density}. 
Of course, this discussion totally ignores how $\mathbf{R}^N$ affects the spatial configuration of the population. Our main goal in this paper is to discover how 
the spatial distribution of the population evolves when the density is equilibrated at a faster timescale in comparison to the other mechanisms.

For any $h \in \R^q_+$, the matrix $A(h)$ given by \eqref{defninteractionmatrix} signifies how the various types of individuals interact when the population density is $h$. Part (C) of Assumption \ref{mainassumptionsonr} means that at the equilibrium population density, all the types of individuals are communicating with each other 
by influencing each other's birth and death rates.

\subsection{Mathematical Models}\label{mathmodels}

We now describe the models that we consider in this paper. We start with a basic model which only has spatial migration along with density regulation. 
We then extend this model by adding other features such as position dependence in the birth and death rates, offspring dispersal and immigration.
All the models will be parameterized by the scaling parameter $N$, with $1/N$ being the mass of each individual in the population. 
We will describe each model by specifying the generator of the associated Markov process. The well-posedness of the martingale problems corresponding to these generators is given by Proposition \ref{prop:wellposedness}. Our main results are presented in Section \ref{mainresultsection}. 

\subsubsection{Basic Model}\label{sec:basicmodel}

In this model, the individuals migrate according to the type-dependent migration mechanism specified in Section \ref{migrationmechanism} and
their birth and death rates regulate 
the population density as described in Section \ref{density_regulation_mechanism}. 
If the scaling parameter is $N$, then at any time we represent the population as a measure in $\mathcal{M}^q_{N,a}(E)$. The population evolves due to the following events.
\begin{itemize}
 \item Each individual of type $i \in Q$ migrates in $E$ according to an independent Markov process with generator $B_i$.
\item When the population density vector is $h \in \R^q_+$, each individual of type $i \in Q$ gives birth to an individual of type $j \in Q$ at rate $N \beta_{ij}(h)$. At the time of birth, the offspring is placed at the same location as its parent.
\item  When the population density vector is $h \in \R^q_+$, each individual of type $i \in Q$ dies at rate $N \rho_i(h)$.  
\end{itemize}
This population dynamics can be viewed as a $\mathcal{M}^q_{N,a}(E)$-valued Markov process whose generator 
$\mathbf{A}^N_0 : \mathcal{D}(\mathbf{A}^N_0) = \mathcal{C}_0^q \to B(\mathcal{M}^q_{N,a}(E))$ is given by
\begin{align}
\label{defn:an0}
\mathbf{A}^N_0 \hat{F} (\mu) & =  \mathbf{B}^N \hat{F} ( \mu ) + N^2 \sum_{i, j \in Q } 
\int_{E} \beta_{ij}(h) \left( \hat{F}  \left( \mu +\frac{1}{N} \delta^j_{x} \right) - \hat{F} (\mu) \right)\mu_i(dx)  \\
& +  N^2  \sum_{i \in Q } \int_{E } \rho_{i}(h) \left( \hat{F}  \left( \mu -\frac{1}{N} \delta^i_{x} \right) - \hat{F} (\mu) \right)\mu_i(dx), \notag
\end{align}
for any $\hat{F}  \in \mathcal{C}_0^q$. Here $h = H(\mu)$ is the density vector corresponding to $\mu$.

\subsubsection{Model with position dependence in the birth and death rates}\label{sec:positiondependentmodel}

In the above model, the birth and death rates of individuals do not depend on their location. 
One may want to consider models in ecology where some spatial locations are more 
advantageous for reproduction or some locations are more hazardous for survival. If we think of $E$ as the space of genetic traits, 
then one may consider models in which the trait of an individual influences its chances of reproduction or survival. 
To capture such situations we now introduce a model in which the birth and death rates of an individual can vary with its position. 
However we will assume that this position dependent variation is \emph{small}, in the sense that even though an individual's birth and death rate is of order $N$ 
(as in Section \ref{sec:basicmodel}), the spatial variation in these rates is of order $1$. We now make the model precise.

For each $i,j \in Q$, let $b^s_{ij}, d^s_i$ be bounded continuous functions from $E \times \R^q_{+}$ to $\R_+$. These functions determine the \emph{spatial} variation in the birth and death rates.
The migration of individuals is like in the basic model. However now the birth and death mechanism changes as follows.
\begin{itemize}
 \item When the population density vector is $h \in \R^q_+$, an individual of type $i \in Q$ located at $x \in E$, gives birth to an individual of type $j \in Q$ at rate 
$b^s_{ij}(x,h)+ N \beta_{ij}(h)$. At the time of birth, the offspring is placed at the same location as its parent. 
\item When the population density vector is $h \in \R^q_+$, an individual of type $i \in Q$ located at $x \in E$, 
dies at rate $d^s_{i}(x,h) + N \rho_i(h)$.
\end{itemize}
The evolution of our population under this dynamics can be viewed as a $\mathcal{M}^q_{N,a}(E)$-valued Markov process with generator 
$\mathbf{A}^N_1 : \mathcal{D}(\mathbf{A}^N_1) = \mathcal{C}_0^q \to B(\mathcal{M}^q_{N,a}(E))$ defined for any $\hat{F}  \in \mathcal{C}_0^q $ by
\begin{align}
\label{defn:an1}
\mathbf{A}^N_1 \hat{F} (\mu) &= \mathbf{B}^N \hat{F} (\mu) + 
 \sum_{i, j  \in Q} N \int_{E } \left( b^{s}_{i j}(x,h) +  N \beta_{ij}(h) \right)\left( \hat{F} \left( \mu + \frac{1}{N}\delta^j_{x}  \right) - \hat{F} (\mu)\right) 
  \mu_i(dx)  \\
&  + \sum_{i \in Q} N \int_{E }\left( d^{s}_{i}(x,h) + N \rho_i(h) \right)\left( \hat{F} \left( \mu - \frac{1}{N}\delta^i_{x}  \right) - \hat{F} (\mu)\right)   \mu_i(dx), \notag
\end{align} 
where $h = H(\mu)$ is the density vector corresponding to $\mu$.

\subsubsection{Model with offspring dispersal at birth}\label{sec:dispersalmodel}

In the basic model we described in Section \ref{sec:basicmodel}, the offspring is placed at the same location as its parent at the time of birth. 
However we may want to construct models where this restriction needs to be relaxed. 
For example, while modeling plant populations, one may wish to account for the spreading of seeds due to wind and other factors. Also in models for 
population genetics, where $E$ is the space of genetic traits, offsprings may be born with a different trait than their parents due to mutations. 
To consider such situations we now present a model in which an offspring may be born away from its parent. 
We allow this offspring dispersal to either be \emph{rare} (happens with probability proportional to $1/N$) or \emph{small} 
(the offspring is placed at a distance proportional to $1/N$ from the parent). We handle both these cases in a unified way.

For each $i,j \in Q$ and $N \in \N$, let $\vartheta^N_{ij}$ be a function from $E$ to $\mathcal{P}(E)$ and let $p^N_{ij}$ be a function from $E$ to $[0,1]$. 
The individuals migrate and die in the same way as described in the basic model (Section \ref{sec:basicmodel}). The birth rates are also the same as in the basic model. 
However when an individual of any type $i \in Q$ located at $x \in E$, gives birth to an individual of type $j$, the location of the offspring is $x$ with 
probability $(1-p^N_{ij}(x))$ and distributed according to $\vartheta^N_{ij}(x,\cdot)$ with probability $p^N_{ij}(x)$. 

To pass to the limit $N \to \infty$, we need an assumption on $p^N_{ij}$ and $\vartheta^N_{ij}$ which is stated below.
\begin{assumption}
\label{assmp:limitrelationshipondispersal}
For each $i,j \in Q$ we assume that there is an operator 
$C_{ij} : \mathcal{D}(C_{ij}) \to C(E)$, whose domain is taken to be the same as $\mathcal{D}_0$ (see \eqref{defn:classD0}) for convenience, such that for every 
$f\in  \mathcal{D}_0$.
\begin{align*}
\lim_{N \to \infty} \sup_{x \in E} \left|  N  p^N_{ij}(x) \int_{E} \left( f(y) - f(x) \right) \vartheta^N_{ij}(x,dy) - C_{ij}f(x)      \right| = 0.  
\end{align*}
\end{assumption}

The evolution of our population under the dynamics described above can be viewed as a $\mathcal{M}^q_{N,a}(E)$-valued Markov process with generator 
$\mathbf{A}^N_2 : \mathcal{D}(\mathbf{A}^N_2) = \mathcal{C}_0^q \to B(\mathcal{M}^q_{N,a}(E))$ defined by its action on any $\hat{F}  \in \mathcal{C}_0^q $ by
\begin{align}
\label{defn:an2}
\mathbf{A}^N_2 \hat{F} (\mu) &= \mathbf{B}^N \hat{F} ( \mu ) +  N^2 \sum_{i \in Q } \int_{E } \rho_{i}(h) \left( \hat{F}  \left( \mu -\frac{1}{N} \delta^i_{x} \right) - \hat{F} (\mu) \right)\mu_i(dx)\\
& +  N^2 \sum_{i,j  \in Q}  \int_{E } \beta_{ij}(h)  \left( 1-  p^N_{ij}(x) \right) \left( \hat{F}\left( \mu +\frac{1}{N} \delta^j_{x} \right) - \hat{F} (\mu) \right)\mu_i(dx) \notag \\
& +  N^2 \sum_{i,j \in Q }  \int_{E } \beta_{ij}(h) p^N_{ij}(x) \left[ \int_{E} \left( \hat{F} \left( \mu +\frac{1}{N} \delta^j_{y} \right) - \hat{F} (\mu) \right) \vartheta^N_{ij}(x,dy) \right]\mu_i(dx) ,\notag
\end{align} 
where $h = H(\mu)$ is the density vector corresponding to $\mu$. 

\subsubsection{Model with immigration}\label{sec:immmigrationmodel}

Consider a population whose dynamics is as described in the basic model (Section \ref{sec:basicmodel}). In addition, suppose that the individuals of each type are 
immigrating to $E$ at a certain density dependent rate and settling down according to some distribution on $E$. In this section we model this situation.
Such a model can help us understand the effects of immigration on the population demography.

For each $i \in Q$, let $\kappa_i : \R^q_+ \to \R_+$ be a continuous function satisfying     
\begin{align}
\label{lipchitzboundonkappa}
\kappa_i(h) \leq C ( 1+\left\| h\right\|_1) \textrm{ for all } h \in \R^q_{+}, 
\end{align}  
for some $C>0$. The individuals migrate, reproduce and die as in the basic model. 
Moreover, when the population density vector is $h \in \R^q_+$, the individuals of each type $i \in Q$ arrive in the population at rate $N \kappa_i(h)$ and their 
initial location is given by the distribution $\Theta_i \in \mathcal{P}(E)$.

The evolution of our population under this dynamics can be viewed as a $\mathcal{M}^q_{N,a}(E)$-valued Markov process with generator 
$\mathbf{A}^N_3 : \mathcal{D}(\mathbf{A}^N_3) = \mathcal{C}_0^q \to B(\mathcal{M}^q_{N,a}(E))$ defined by its action on any $\hat{F}  \in \mathcal{C}_0^q $ as
\begin{align}
\label{defn:an3}
\mathbf{A}^N_3 \hat{F} (\mu) &= \mathbf{B}^N \hat{F} ( \mu ) + N \sum_{i \in Q }  \kappa_i(h) \int_{E} \left( \hat{F}  \left( \mu +\frac{1}{N} \delta^i_{x} \right) - \hat{F} (\mu)  \right) \Theta_i(dx)     \\
& +  N^2 \sum_{i , j  \in Q}  \int_{E } \beta_{ij}(h)   \left( \hat{F}  \left( \mu +\frac{1}{N} \delta^j_{x} \right) - \hat{F} (\mu) \right)\mu_i(dx) \notag \\
& +  N^2 \sum_{i \in Q} \int_{E} \rho_{i}(h) \left( \hat{F}  \left( \mu -\frac{1}{N} \delta^i_{x} \right) - \hat{F} (\mu) \right)\mu_i(dx), \notag
\end{align} 
where $h = H(\mu)$ is the density vector corresponding to $\mu$.

\begin{remark}
\label{rem:splittingofoperators}
{\rm 
For each $l \in \{ 0 ,1,2,3 \}$ define the operators $\mathbf{G}^N_l : \mathcal{D}(\mathbf{G}^N_l) = B(\mathcal{M}^q_F(E)) \to B(\mathcal{M}^q_F(E)) $ as follows. 
For any $F \in B\left(\mathcal{M}^q_F(E)\right)$ let
 \begin{align}
 \mathbf{G}^N_0 F(\mu) =& 0, \label{defgn0}  \\
\mathbf{G}^N_1 F(\mu) =&  N \left[ \sum_{i, j \in Q } \int_{E}  b^{s}_{i j}(x,h)\left( F\left( \mu + \frac{1}{N}\delta^j_{x}  \right) - F(\mu)\right)  \mu_i(dx) 
 \right.  \label{defgn1} \\& \left. + \sum_{i \in Q } \int_{E } d^{s}_{i}(x,h) \left( F\left( \mu - \frac{1}{N}\delta^i_{x}  \right) - F(\mu)\right)   \mu_i(dx)\right], \notag
 \end{align}
 \begin{align}
 \mathbf{G}^N_2 F(\mu)  =&  N^2 \sum_{i, j \in Q } \beta_{ij}(h)  \int_{E }  \left[ \int_{E} \left( F \left( \mu +\frac{1}{N} \delta^j_{y} \right) - F \left( \mu +\frac{1}{N} \delta^j_{x} \right) \right) \vartheta^N_{ij}(x,dy) \right]  \notag \\
 & \times p^N_{ij}(x) \mu_i(dx) \label{defgn2}\\
 \textrm{ and } \mathbf{G}^N_3 F(\mu)  =&   N \sum_{i \in Q}  \kappa_i(h) \int_{E} \left( F \left( \mu +\frac{1}{N} \delta^i_{x} \right) - F(\mu)  \right) \Theta_i(dx), \label{defgn3}
\end{align}
where $h = H(\mu)$. Then for each $l \in \{ 0 ,1,2,3 \}$ and $\hat{F}  \in \mathcal{C}^q_0$ we can write
\begin{align}
\label{splittingofanl}
\mathbf{A}^N_l \hat{F}  =  \mathbf{B}^N  \hat{F}  + N \mathbf{R}^N \hat{F}  + \mathbf{G}^N_l \hat{F} . 
\end{align} 
This form makes the timescale separation clear between the \emph{fast} density regulation mechanism ($N \mathbf{R}^N$) and the \emph{slow} migration ($\mathbf{B}^N$),
position-dependent birth and death ($\mathbf{G}^N_1$), offspring dispersal ($\mathbf{G}^N_2$) and immigration ($\mathbf{G}^N_3$) mechanisms. }
\hfill $\square$
\end{remark}

\subsection{The main results}
\label{mainresultsection}

The main results of this paper are concerned with the limiting behaviour of the dynamics under the models described in Section \ref{mathmodels}. 
Before we present these results we must first verify that all the models in Section \ref{mathmodels} can be represented by a suitable Markov process. This is established by the following proposition which will be proved in Section \ref{sec:wellposedness}.
\begin{proposition}
\label{prop:wellposedness}
For each $l \in \{0,1,2,3\}$ and $N \in \N$, the $D_{\mathcal{M}^q_{N,a} (E)}[0,\infty)$ martingale problem for $\mathbf{A}^N_l$ is well-posed. 
\end{proposition}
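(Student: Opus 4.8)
The plan is to reduce the well-posedness of the martingale problem for each $\mathbf{A}^N_l$ to the classical theory of pure-jump Markov processes with bounded generators, exploiting the fact that for fixed $N$ the dynamics is a countable-state (or at least locally bounded-rate) jump process interspersed with deterministic migration pieces. First I would observe that each $\mathbf{A}^N_l$ splits as in \eqref{splittingofanl} into the migration part $\mathbf{B}^N$, whose martingale problem was already shown to be well-posed at the end of Section \ref{migrationmechanism} (it is just the empirical-measure process of finitely many independent Feller processes), plus the ``jump'' part $N\mathbf{R}^N\hat F + \mathbf{G}^N_l\hat F$. The key structural point is that, starting from a configuration $\mu_0 \in \mathcal{M}^q_{N,a}(E)$ with total mass vector $h_0$, the total number of individuals $N\|h^N(t)\|_1$ is itself a continuous-time Markov chain on $\N_0$ whose up-rates and down-rates are bounded by $C N^2(1 + \|h^N(t)\|_1)$ because $\beta_{ij}$ is bounded, $\rho_i, \kappa_i$ satisfy the linear-growth bounds \eqref{lipchitzboundonkappa}, and $b^s_{ij},d^s_i$ are bounded. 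A standard comparison/Gronwall argument (dominating by a linear birth--death chain, i.e. a Yule-type process) then shows that this chain is non-explosive: $\E[\,\|h^N(t)\|_1\,] \le \|h_0\|_1 e^{Ct}$ for a constant depending on $N$, so the jump times do not accumulate in finite time.

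Next I would construct the process explicitly and show it solves the martingale problem. Between jumps the configuration evolves by independent migration (governed by the $B_i$); at each jump time a single individual is added or removed according to the state-dependent rates read off from \eqref{defn:an0}--\eqref{defn:an3}. Because the total jump rate out of any state $\mu$ is finite (bounded in terms of the current population size) and jumps do not accumulate by the non-explosion estimate, this recipe defines a unique-in-law process $\mu^N(\cdot)$ with paths in $D_{\mathcal{M}^q_{N,a}(E)}[0,\infty)$. Verifying that $\hat F(\mu^N(t)) - \int_0^t \mathbf{A}^N_l \hat F(\mu^N(s))\,ds$ is a martingale for every $\hat F \in \mathcal{C}^q_0$ is then a direct computation: the migration contribution gives $\mathbf{B}^N\hat F$ (this is exactly how $\mathbf{B}^N$ was defined via \eqref{maindefbn}), and Dynkin's formula applied to the compound-Poisson-like jump part gives $N\mathbf{R}^N\hat F + \mathbf{G}^N_l\hat F$; one needs the boundedness of $\hat F$ and of $\mathbf{A}^N_l\hat F$ on $\mathcal{M}^q_{N,a}(E)$, which follows from \eqref{boundednessofbni}, \eqref{coefficientcondition} and the growth bounds on the rate functions, to justify the interchange of sum, integral and expectation. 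Uniqueness follows from the same explicit construction, or alternatively from the fact that $\mathcal{C}^q_0$ is measure-determining on $\mathcal{M}^q_{N,a}(E)$ together with Theorem 4.4.1 in Ethier and Kurtz \cite{EK} once one checks that any solution must be a pure-jump process of the above form (which again uses non-explosion to rule out pathological behaviour).

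The main obstacle, and the only genuinely non-routine point, is that the state space $\mathcal{M}^q_{N,a}(E)$ is not locally finite when $E$ is an uncountable compact metric space, so the jump rates are not uniformly bounded over the whole state space and one cannot invoke the bounded-generator theory off the shelf. The remedy is the localization just described: restrict to the $\mathbf{A}^N_l$-invariant subsets $\{\mu : \|H(\mu)\|_1 \le m\}$ (which the dynamics can leave only by a birth, at rate $O(N^2 m)$ there), solve the martingale problem on each by the bounded-rate theory, and patch the solutions together using the non-explosion estimate to see that the localizing times $\tau_m \to \infty$ almost surely. I would also need to check the measurability and the domain conditions in Assumption \ref{assmp:limitrelationshipondispersal} only at the level of finite $N$, where they are automatic since $p^N_{ij}, \vartheta^N_{ij}$ are given directly; the limit operator $C_{ij}$ plays no role here. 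Everything else is bookkeeping with the formula \eqref{maindefbn} and the splitting \eqref{splittingofanl}.
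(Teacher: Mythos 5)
Your proposal is correct and follows essentially the same route as the paper: localize to the sublevel sets $\{\mu : \|H(\mu)\|_1 < k\}$, treat the localized generator as a bounded perturbation of $\mathbf{B}^N$ (Theorem 4.10.3 in \cite{EK}), establish a Gronwall-type linear-growth estimate on the stopped density process to show the exit times tend to infinity, and patch via the localization theorem (Theorem 4.6.3 in \cite{EK}). The explicit interlacing construction you sketch is an admissible substitute for the abstract perturbation theorem on each localized piece, but it is not needed; the only caution is that the Gronwall bound must be derived for the stopped process with truncated test functions (as the paper does with $c_k(h)h_i$), since integrability of $\|h^N(t)\|_1$ is not available before non-explosion is proved.
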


We now begin analyzing how a sequence of Markov processes with generators $\mathbf{A}^N_l$ behave as $N \to \infty$. The next proposition exhibits some important properties about the limiting dynamics. The proof of this proposition is given in Section \ref{propertiesoflimitingprocesses}. Recall the definition of $U_{\textnormal{eq}}$ from \eqref{defueq}.
\begin{proposition}
\label{prop:constancyofh_mainresult}
Fix a $l \in \{0,1,2,3\}$. For each $N \in \N$, let $\{ \mu^N(t) : t \geq 0\}$ be a solution of the martingale problem for $\mathbf{A}^N_l$ and 
let $\{ h^N(t) = H(\mu^N(t)) : t \geq 0 \}$ be the corresponding density process. 
Assume that there is a compact set $K_0 \subset U_{\textnormal{eq}}$ such that $h^N(0) \in K_0$ a.s.\ for all $N \in \N$. Let $t_N$  be a sequence of positive numbers satisfying $t_N \to 0$ and $N t_N \to \infty$ as $N \to \infty$. Then we have:
\begin{itemize}
 \item[(A)] For all $T>0$ 
\begin{align*}
\sup_{t \in [0,T] } \left\| h^N(t+t_N) - h_{\textnormal{eq}}\right\|_1 \Rightarrow 0 \textrm{ as } N \to \infty.
\end{align*}
\item[(B)] For all $T>0$, $f \in C(E)$ and $i,j \in Q$
\begin{align*}
\sup_{t \in [0,T] } \left| h^N_j(t+t_N) \langle f,\mu^N_i(t+t_N) \rangle -  h^N_i(t+t_N)\langle f,\mu^N_j(t+t_N) \rangle \right| \Rightarrow 0 \textrm{ as } N \to \infty.
\end{align*}  
\end{itemize}
\end{proposition}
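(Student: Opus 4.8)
### Proof proposal for Proposition \ref{prop:constancyofh_mainresult}

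The plan is to work entirely at the level of the density process $h^N(\cdot)$ and a suitable vector of ``weighted first moments'', using the decomposition \eqref{splittingofanl}, $\mathbf{A}^N_l = \mathbf{B}^N + N\mathbf{R}^N + \mathbf{G}^N_l$, together with the Dynkin/martingale machinery. First I would observe that applying $\mathbf{A}^N_l$ to the coordinate functionals $\mu \mapsto h_i = \langle 1_E, \mu_i\rangle$ (which lie in $\mathcal{C}^q_0$), the migration term $\mathbf{B}^N$ contributes nothing since $B_i 1_E = 0$, the $\mathbf{G}^N_l$ terms contribute $O(1)$ bounded drift (this is where the boundedness of $b^s_{ij}, d^s_i$, Assumption \ref{assmp:limitrelationshipondispersal}, and \eqref{lipchitzboundonkappa} are used, together with the a priori bound on $\|h^N\|_1$ that must be extracted from the well-posedness proof), and $N\mathbf{R}^N$ contributes $N\theta(h^N)$ plus a bracket/quadratic-variation term of order $N$. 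Thus $h^N(\cdot)$ is (up to a martingale) a fast perturbation of the ODE flow $\psi_\theta$ run at speed $N$: writing $\tilde h^N(t) = h^N(t/N)$ one gets essentially the setting of Theorem 11.2.1 in \cite{EK} but on the natural timescale. The key point for part (A) is that because $\psi_\theta(h_0,\cdot)\to h_{\textnormal{eq}}$ for every $h_0 \in K_0$ and $K_0$ is compact inside the open invariant set $U_{\textnormal{eq}}$, and because $h_{\textnormal{eq}}$ is asymptotically stable (Assumption \ref{mainassumptionsonr}(B)), uniform convergence of the flow on compacts forces: for any $\varepsilon > 0$ there is a fixed $s_0$ with $\psi_\theta(K_0,s) \subset B(h_{\textnormal{eq}},\varepsilon/2)$ for all $s \ge s_0$. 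Since $t_N \to 0$ while $Nt_N \to \infty$, the rescaled process has run for time $Nt_N \to \infty$ by real time $t_N$, so by a functional law of large numbers on $[0, N(T+t_N)]$-rescaled-time the density is within $\varepsilon$ of $h_{\textnormal{eq}}$ uniformly on $[t_N, T+t_N]$ with probability tending to one; the $\sup$ over $t\in[0,T]$ then gives (A). The argument should be organized as: (i) Dynkin formula for $h^N$; (ii) Gronwall/Doob estimate showing the martingale part is uniformly small on the relevant (shrinking, in real time) window — here the $O(N)$ quadratic variation over a window of length $t_N+T$ is $O(N(t_N+T))$, but after the time change it is $O((t_N+T))$ on the slow clock... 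I would instead keep everything on the fast clock where QV over $[0,N(T+t_N)]$ is $O(N(T+t_N))$ and the martingale increments over any interval of fast-length $1$ are $O(1)$, which is exactly the scaling under which Theorem 11.2.1 in \cite{EK} applies; (iii) combine with the flow's attractivity. I expect the cleanest route is literally to cite Theorem 11.2.1 in \cite{EK} for the convergence of $h^N$ on compact time intervals of the \emph{fast} timescale to the deterministic flow, then transfer via the $t_N$ choice.

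For part (B), the plan is to introduce, for fixed $f \in \mathcal{D}_0$ (extending to $C(E)$ by density and uniform boundedness afterwards) and each pair $i,j$, the functionals $\mu \mapsto h_j \langle f, \mu_i\rangle$ and $\mu \mapsto h_i\langle f, \mu_j\rangle$; both are products of two elements of the form $\langle g,\mu_k\rangle$ and hence lie in $\mathcal{C}^q_0$ (take $m=2$ with one factor $g = 1_E$ and appropriate constant coefficients $c(\cdot)$, noting $\langle c(h),h\rangle$ stays bounded on the a priori compact range of $h^N$). Apply $\mathbf{A}^N_l$ to the difference $D^N_{ij}(t) := h^N_j(t)\langle f,\mu^N_i(t)\rangle - h^N_i(t)\langle f,\mu^N_j(t)\rangle$. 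The crucial structural cancellation is in the $N\mathbf{R}^N$ part: the order-$N^2$ contributions to $d\langle h_j\langle f,\mu_i\rangle\rangle/dt$ coming from the density-regulation births and deaths are, after using that an individual's birth/death does not move mass in $f$-space except by adding/removing a point mass at its own location, of the form $N^2\big( (A(h)h)_j \langle f,\mu_i\rangle + h_j \cdot (\text{birth/death flux into } \mu_i) \big)$, and the birth/death flux into $\langle f,\mu_i\rangle$ is itself $\langle f,\mu_i\rangle$ times a density-dependent rate plus a cross term $\sum_k \beta_{ki}(h)\langle f,\mu_k\rangle$ — symmetric enough that in $D^N_{ij}$ the ``diagonal'' pieces cancel and what remains is a term of the form $N^2 \sum_k \beta_{ki}(h^N) D^N_{kj}(t) - N^2(\text{something})D^N_{ij}$, i.e. a \emph{linear} system $\dot D^N \approx N^2 M(h^N) D^N$ in the vector $(D^N_{kj})_{k\in Q}$ with $M(h)$ governed by the off-diagonal structure of $A(h)$, plus $O(N)$ lower-order drift from $\mathbf{G}^N_l$ and migration and an $O(N)$-quadratic-variation martingale. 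Because $A(h_{\textnormal{eq}})$ is \emph{irreducible} with zero row-combination $A(h_{\textnormal{eq}})h_{\textnormal{eq}} = 0$ (Assumption \ref{mainassumptionsonr}(A),(C)), the relevant matrix $M(h_{\textnormal{eq}})$ is (a scaled) irreducible sub-stochastic-type generator whose only direction of non-decay is killed — more precisely, the $O(N^2)$ dynamics contracts $D^N$ toward $0$; combined with part (A) (so $h^N(t+t_N)$ is near $h_{\textnormal{eq}}$, where this contraction is uniform) and the fact that the forcing and noise are only $O(N)$ and $O(\sqrt N)$ over windows of fast-length $O(1)$, a Gronwall-type estimate in the fast timescale shows $\E\big[\sup_{t\le N(T+t_N)}|D^N(\text{fast }t)|^2\big]$ is dominated by an initially-$O(1)$ term driven to zero plus an $O(1/N)$ contribution, giving $\sup_{t\in[0,T]}|D^N_{ij}(t+t_N)| \Rightarrow 0$.

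The main obstacle, and the part requiring genuine care rather than routine estimates, is isolating and controlling the $O(N^2)$ behaviour of $D^N_{ij}$ in part (B): one must compute $\mathbf{A}^N_l$ acting on the product functionals exactly, identify which order-$N^2$ terms cancel in the antisymmetric combination and which survive, and then prove that the surviving $O(N^2)$ operator is genuinely contractive \emph{near} $h_{\textnormal{eq}}$ — this is precisely where irreducibility (C) earns its keep, and one likely needs a Perron–Frobenius / Metzler-matrix argument (or a Lyapunov function adapted to $A(h_{\textnormal{eq}})$) giving a uniform spectral gap on a neighbourhood of $h_{\textnormal{eq}}$, which by part (A) is where the process sits after time $t_N$. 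A secondary technical nuisance is the bookkeeping of the $O(1)$ terms (the product rule generates cross terms $\mathbf{B}^{n_i}_i$ acting on one factor times the other factor, and the second-difference terms from births/deaths producing the quadratic-variation-type contributions $\propto N \cdot N^{-2} = N^{-1}$ summed over $\sim N$ individuals $= O(1)$); these are all bounded uniformly by the standing boundedness assumptions and the a priori bound on $\|h^N\|_1$, but they must be tracked to see they do not destroy the contraction. Once these are in hand, parts (A) and (B) follow by the standard martingale-inequality-plus-Gronwall template, and the extension from $f\in\mathcal{D}_0$ to $f\in C(E)$ in (B) is by density and the uniform bound $|D^N_{ij}(t)|\le 2\|f\|_\infty \sup_N\|h^N\|_1^2$.
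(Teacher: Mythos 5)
Your overall architecture is close to the paper's: the paper also forms the antisymmetric combinations $Y^N_i(t) = \langle f,\mu^N_i(t)\rangle\sum_j h^N_j(t) - h^N_i(t)\sum_j\langle f,\mu^N_j(t)\rangle$, shows via integration by parts that they satisfy a \emph{homogeneous} linear equation $Y^N(t) = Y^N(0) + N\int_0^t \bar G(h^N(s))Y^N(s)\,ds + Z^{N,2}(t)$, and uses a Perron--Frobenius/Metzler argument (Lemma \ref{lemma:appendix1}, built on irreducibility) to show $\bar G(h_{\textnormal{eq}})$ is stable. Two corrections to your bookkeeping: the dominant drift on products such as $h_j\langle f,\mu_i\rangle$ is of order $N$, not $N^2$ (each jump changes such a functional by $O(1/N)$, and the jump rates are $O(N^2)$ in total, giving an $O(N)$ drift and an $O(1)$ quadratic variation per unit time); and $h_j\langle f,\mu_i\rangle$ with constant coefficients does not satisfy \eqref{coefficientcondition} on all of $\R^q_+$, so one must localize with the cutoffs $c_k$ and stopping times as in Lemma \ref{lemma:stoppingtimesinfinity} rather than appeal to an a priori bound on $\|h^N\|_1$.

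The genuine gap is in how you propose to control the process over the relevant time horizon. Both parts (A) and (B) require uniform control of $X^N=(h^N,Y^N)$ on $[t_N, T+t_N]$ in slow time, i.e.\ on an interval of length $N(T+t_N)\to\infty$ on the fast clock. Theorem 11.2.1 of Ethier--Kurtz is a law of large numbers on a \emph{fixed} compact fast-time interval, and the same is true of any ``standard martingale-inequality-plus-Gronwall template'': a Gronwall bound over a fast-time window of length $NT$ carries a factor $e^{CNT}$ and is useless here. To conclude that the process, once near $x_{\textnormal{eq}}=(h_{\textnormal{eq}},\bar 0_{q-1})$, \emph{stays} near it for the entire (unboundedly long, in fast time) window, you must exploit the negativity of the spectrum of $[JF(x_{\textnormal{eq}})]$ to beat the Gronwall growth — an exit-time or Lyapunov-function estimate that you assert but never supply. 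This is precisely the content of Katzenberger's Theorem 6.3, which the paper invokes to get $\sup_{t\in[0,T]}\|X^N(t)-\psi_F(X^N(0),Nt)\|\Rightarrow 0$, and it is also why the paper couples $h^N$ and $Y^N$ into a single semimartingale with a block-triangular stable Jacobian rather than treating (A) first and then feeding it into (B): the contraction of $Y^N$ is only available where $h^N$ is close to $h_{\textnormal{eq}}$, and the two must be controlled simultaneously and uniformly in time. Without either citing such a result or proving the uniform-in-time stability estimate directly, both halves of your argument stop short of the conclusion.
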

Let the processes $\{ \mu^N(t) : t \geq 0\}$ and $\{ h^N(t) :  t \geq 0 \}$ be as in the above proposition. Part (A) of this proposition implies that the process $h^N(\cdot + t_N) \Rightarrow h_{\textnormal{eq}}$ as $N \to \infty$ in $D_{\R^q}[0,\infty)$. In other words, for large $N$, the density process is constantly near the equilibrium population density $h_{\textnormal{eq}}$ (after a small time shift $t_N$). This emphasizes the point that we made in Section \ref{introduction}. The density regulation mechanism operating at a faster timescale than our timescale of observation, keeps the population density equilibrated at all times. 
Note that the process $\{ \mu^N(t) = (\mu^N_1(t),\dots,\mu^N_q(t)) : t \geq 0 \}$ is $\mathcal{M}^q_F(E)$-valued and it keeps track of how the populations corresponding to all the $q$-types are evolving in the space $E$. Part (B) of the above proposition shows that in the limit, all the $q$ sub-populations are spatially \emph{fused} (in proportions determined by the density vector). 
Hence their spatial evolution can be studied together by using a single $\mathcal{P}(E)$-valued process. 
This kind of model reduction result is quite common in stochastic reaction networks with multiple timescales (see \cite{Ball} and \cite{HWKang}), where one can often equilibrate the concentrations of the \emph{fast} chemical species and derive a reduced model for the dynamics of the \emph{slow} species.
In our case, the dynamics at the fast timescale equilibrates the population density as well as the relative abundances of all the $q$ sub-populations at each location on $E$. These relative abundances equilibrate because the birth-death interaction matrix $A(h)$ (see \eqref{defninteractionmatrix}) is irreducible at the equilibrium density $h_{\textnormal{eq}}$ (see part (C) of Assumption \ref{mainassumptionsonr}).
 
The proof of Proposition \ref{prop:constancyofh_mainresult} will exploit the form \eqref{splittingofanl} of the operator $\mathbf{A}^N_l$. A brief outline of the proof is as follows.
We will define a $\R^q_+ \times \R^{q-1}$ valued process $\{ X^N(t) : t \geq 0 \}$ by
\begin{align}
\label{defn_xn}
X^N(t) = (h^N_1(t),\dots,h^N_q(t), Y^N_1(t),\dots,Y^N_{q-1}(t)),
\end{align}  
where each $Y^N_i(t)$ is a density-dependent linear combination of terms like $(h^N_j(t) \langle f,\mu^N_i(t) \rangle$ $-  h^N_i(t)\langle f,\mu^N_j(t) \rangle)$ for some choice of $f \in C(E)$. Next we will show that $X^N$ is a semimartingale which satisfies an equation of the form
 \begin{align}
\label{prop:defxn_mainresult}
X^N(t) = X^N(0) + N \int_{0}^{t} F(X^N(s))ds + Z^N(t), 
\end{align} 
where $\{Z^N : N \in \N\}$ is a sequence of $\R^{2q-1}$-semimartingales which is tight in the space $D_{\R^{2q-1}} [0,\infty)$. This clearly indicates that for large values of $N$, the drift term of the form $N F(X^N(\cdot))$ completely overwhelms the effect of the semimartingale $Z^N$. Equations like \eqref{prop:defxn_mainresult} were studied by Katzenberger in \cite{kat} in a much more general setting. He showed that under certain conditions, the sequence of semimartingales $\{ X^N : N \in \N \}$ converges in distribution to a semimartingale $X$ as $N \to \infty$. Moreover $X$ only takes values in a set $\Gamma$ which is an invariant manifold for the deterministic flow induced by $F$. In our case, this set $\Gamma$ only consists of one point $x_{\textnormal{eq}} = (h_{\textnormal{eq}}, \bar{0}_{q-1})$ and this enables us to prove Proposition \ref{prop:constancyofh_mainresult}. The details are given in Section \ref{propertiesoflimitingprocesses}. 

We mentioned before that in the limit $N \to \infty$, the spatial evolution of all the $q$ sub-populations is governed by a single $\mathcal{P}(E)$-valued process. Our next result, Theorem \ref{mainresult}, shows that this $\mathcal{P}(E)$-valued process is in fact a Fleming-Viot process that can be characterized by its generator. Before we state Theorem \ref{mainresult} we first need to introduce several objects. The existence and properties of some of these objects will be studied in the appendix.

Recall the equilibrium population density vector $h_{\textnormal{eq}} = (h_{\textnormal{eq},1},\dots,h_{\textnormal{eq},q})$ from Section \ref{density_regulation_mechanism}. It can be verified that this vector has strictly positive components (see part (A) of Lemma \ref{lemma:appendix1}). Moreover part (C) of Lemma \ref{lemma:appendix1} shows that there is a unique vector $v_{\textnormal{eq}} = (v_{\textnormal{eq},1},\dots,v_{\textnormal{eq},q}) \in \R^q_{*}$ such that 
\begin{align}
\label{defb:veq}
v_{\textnormal{eq}} A(h_{\textnormal{eq}}) = \bar{0}_q \textrm{ and }\langle v_{\textnormal{eq}}, h_{\textnormal{eq}} \rangle = \sum_{i=1}^q v_{\textnormal{eq},i} h_{\textnormal{eq},i} = 1. 
\end{align}
Observe that $\mathcal{D}_0 \subset C(E)$ satisfies \eqref{defn:classD0}. Define an operator $B_{\textnormal{avg}} : \mathcal{D}(B_{\textnormal{avg}}) = \mathcal{D}_0 \to C(E)$ by
\begin{align}
B_{\textnormal{avg}} f = \sum_{i \in Q}  v_{\textnormal{eq}, i } h_{\textnormal{eq}, i} B_i f \ \textrm{ for } \ f \in \mathcal{D}_0.
\end{align}
From \eqref{defb:veq} we can see that the operator $B_{\textnormal{avg}}$ is a convex combination of the operators $\{B_i : i \in Q\}$. Let 
$\gamma_{\textrm{smpl}}$ be the positive constant given by 
\begin{align}
\label{defnresamplingrate}
\gamma_{\textrm{smpl}} =  \sum_{i \in Q } v^2_{\textnormal{eq},i} h_{\textnormal{eq},i} \rho_i(h_{\textnormal{eq}}).
\end{align}
For each $i,j \in Q$, let the functions $b^{s}_{ij}, d^{s}_i$ be as in Section \ref{sec:positiondependentmodel}. Define $b^{s}_{\textnormal{avg}}, d^s_{\textnormal{avg}} \in C(E)$ as
\begin{align*}
b^{s}_{\textnormal{avg}}(x) = \sum_{i,j \in Q} b^{s}_{ij}(x,h_{\textnormal{eq}}) v_{\textnormal{eq},j} h_{\textnormal{eq},i} \ \textrm{ and }  \
d^{s}_{\textnormal{avg}}(x) = \sum_{i \in Q} d^{s}_{i}(x,h_{\textnormal{eq}}) v_{\textnormal{eq},i} h_{\textnormal{eq},i} \textrm{ for } x \in E. 
\end{align*}
For each $i,j \in Q$, let the operator $C_{ij}$ be as in Assumption \ref{assmp:limitrelationshipondispersal}. Define the operator 
$C_{\textnormal{avg}} : \mathcal{D}(C_{\textnormal{avg}}) = \mathcal{D}_0 \to C(E)$ by
\begin{align}
\label{defn:cavg}
C_{\textnormal{avg}} f = \sum_{i,j \in Q} \beta_{ij}(h_{\textnormal{eq}}) v_{\textnormal{eq},j} h_{\textnormal{eq}, i} C_{ij}f \textrm{ for } f \in \mathcal{D}_0.
\end{align}
For each $i \in Q$, let $\kappa_i, \Theta_i$ be as in Section \ref{sec:immmigrationmodel}. Define 
the operator $I_{\textnormal{avg}} : \mathcal{D}(I_{\textnormal{avg}}) = B(\mathcal{P}(E)) \to B(\mathcal{P}(E))$ by 
\begin{align}
\label{defn:iavg} 
I_{\textnormal{avg}} f(x) = \sum_{i \in Q} \kappa_i(h_{\textnormal{eq}}) v_{\textnormal{eq},i} \int_{E} (f(y)-f(x)) \Theta_i(dy) \textrm{ for } f \in B(\mathcal{P}(E)).
\end{align}

We now define the operators $\mathbf{A}_0,\mathbf{A}_1$,$\mathbf{A}_2$ and $\mathbf{A}_3$ with domain $\mathcal{C}_0$ (see \eqref{defc0}) as below. 
For any $F(\nu) = \prod_{l = 1}^{m} \langle f_l , \nu \rangle \in \mathcal{C}_0$ let  
\begin{align}
\label{main:gena0}
\mathbf{A}_0 F(\nu) &= \sum_{l = 1}^{m} \langle B_{\textnormal{avg}} f_l,\nu \rangle \prod_{j \neq l } \langle f_j , \nu \rangle 
\\& + \gamma_{\textrm{smpl}}\sum_{1 \leq l \neq k \leq m} \left( \left\langle f_{l}f_{k},\nu \right\rangle - \left\langle f_{l},\nu \right\rangle \left\langle f_{k},\nu \right\rangle
 \right)  \prod_{j \neq l,k} \langle f_j,\nu\rangle, \notag \\ 
\mathbf{A}_1 F(\nu) &= \mathbf{A}_0 F(\nu) + \sum_{l = 1}^{m} \left(  \left\langle b^{s}_{\textnormal{avg}} f_l , \nu \right\rangle - \left\langle b^{s}_{\textnormal{avg}}  , \nu \right\rangle \left\langle  f_l ,\nu \right\rangle  \right)
 \prod_{j \neq l} \langle f_j , \nu \rangle  \label{main:gena1} \\ 
& + \sum_{l = 1}^{m} \left( \left\langle d^{s}_{\textnormal{avg}}  , \nu \right\rangle \left\langle  f_l ,\nu \right\rangle   -  \left\langle d^{s}_{\textnormal{avg}} f_l , \nu \right\rangle   \right)
 \prod_{j \neq l} \langle f_j , \nu \rangle,\notag\\
\mathbf{A}_2 F(\nu) &= \mathbf{A}_0 F(\nu) + \sum_{l = 1}^{m} \langle  C_{\textnormal{avg}} f_l , \nu \rangle  \prod_{j \neq l} \langle f_j , \nu \rangle  \label{main:gena2} \\
\textrm{ and } \ \mathbf{A}_3 F(\nu) &= \mathbf{A}_0 F(\nu) + \sum_{l = 1}^{m} \langle  I_{\textnormal{avg}} f_l , \nu \rangle  \prod_{j \neq l} \langle f_j , \nu \rangle \label{main:gena3}.
\end{align}
We will assume that the operators $B_{\textnormal{avg}}$, $(B_{\textnormal{avg}}+C_{\textnormal{avg}})$ and $(B_{\textnormal{avg}}+I_{\textnormal{avg}})$ generate Feller semigroups on $C(E)$.
The well-posedness of the martingale problems corresponding to $\mathbf{A}_0,\mathbf{A}_1$,$\mathbf{A}_2$ and $\mathbf{A}_3$ follows from Theorem 3.2 in \cite{EK93}.
In fact, any solution will have sample paths in $C_{\mathcal{P}(E)}[0,\infty)$. The operator $\mathbf{A}_0$ is the generator of a \emph{neutral} Fleming-Viot process on $E$ with 
\emph{mutation} operator $B_{\textnormal{avg}}$ and \emph{sampling} rate $2 \gamma_{\textrm{smpl}}$. The operators $\mathbf{A}_2$ and $\mathbf{A}_3$ generate a similar 
Fleming-Viot process with the \emph{mutation} operator changed to $(B_{\textnormal{avg}}+C_{\textnormal{avg}})$ and $(B_{\textnormal{avg}}+I_{\textnormal{avg}})$ respectively. 
The operator $\mathbf{A}_1$ also generates a similar Fleming-Viot process, but with \emph{selection}. The last two terms in its definition correspond to 
\emph{fecundity selection} (with intensity function $b^s_{\textnormal{avg}}$) and \emph{viability selection} (with intensity function $d^{s}_{\textnormal{avg}}$). 
See Donnelly and Kurtz \cite{DKgen} for more details. We now formally state the main result of our paper. The proof is given in Section \ref{section:fvconvergence}.
\begin{theorem}
\label{mainresult}
Fix a $l \in \{0,1,2,3\}$ and let $\{ \mu^N(t) : t \geq 0 \}$ be a solution to the martingale problem for $\mathbf{A}^N_l$. Suppose that $\mu^N(0) \Rightarrow \mu(0)$ as $N \to \infty$ and 
$H(\mu(0)) \in U_{\textnormal{eq}}$ a.s.\, where $U_{\textnormal{eq}}$ is given by \eqref{defueq}. Let $t_N$ be a sequence as in Proposition \ref{prop:constancyofh_mainresult}. Define another process $\{\hat{\mu}^N(t) : t \geq 0\}$ by
\begin{align}
\label{defnhatmun}
 \hat{\mu}^N(t) = \mu^N(t+t_N) \textrm{ for } t \geq 0.
\end{align}
 Then there exists a distribution $\pi \in \mathcal{P}\left( \mathcal{P}(E) \right)$ such that $\hat{\mu}^N \Rightarrow h_{\textnormal{eq}} \nu$ in $D_{\mathcal{M}^q_F(E)}[0,\infty)$ 
as $N \to \infty$ and $\{ \nu(t) : t \geq 0 \}$ is a Fleming-Viot process with type space $E$, generator $\mathbf{A}_l$ and initial distribution $\pi$.
\end{theorem}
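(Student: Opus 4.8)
The plan is to prove Theorem \ref{mainresult} by the standard two-step programme for weak convergence of Markov processes: first establish relative compactness (tightness) of the sequence $\{\hat{\mu}^N\}$ in $D_{\mathcal{M}^q_F(E)}[0,\infty)$, and then identify every subsequential limit as $h_{\textnormal{eq}}\nu$ where $\nu$ solves the martingale problem for $\mathbf{A}_l$. By Proposition \ref{prop:constancyofh_mainresult}(A), the density component $h^N(\cdot+t_N)\Rightarrow h_{\textnormal{eq}}$, so the only real content is the behaviour of the (suitably normalized) spatial measure. Because of Proposition \ref{prop:constancyofh_mainresult}(B), in the limit all $q$ sub-populations are proportional, so I would introduce the ``mixed'' probability-measure-valued process $\nu^N(t)$ built from a density-dependent convex combination of $\mu^N_1(\cdot+t_N),\dots,\mu^N_q(\cdot+t_N)$, chosen so that the mixing weights are exactly $v_{\textnormal{eq},i}$ at the equilibrium density (cf.\ \eqref{defb:veq}); then $\hat{\mu}^N - h_{\textnormal{eq}}\nu^N \Rightarrow 0$ by parts (A) and (B), and it suffices to prove $\nu^N\Rightarrow\nu$.

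For tightness I would use the generator/martingale criterion (e.g.\ Theorem 3.9.1 and 3.9.4 in Ethier--Kurtz \cite{EK}): for each $F\in\mathcal{C}_0$, the process $F(\nu^N(t)) - \int_0^t (\text{something bounded})\,ds$ is a martingale up to error terms, so the Aldous--Rebolledo conditions hold provided I can control the compensator. The compensator is obtained by applying the full generator $\mathbf{A}^N_l = \mathbf{B}^N + N\mathbf{R}^N + \mathbf{G}^N_l$ (Remark \ref{rem:splittingofoperators}) to functions of $\mu$ that, when restricted near the equilibrium manifold $\{h = h_{\textnormal{eq}}\}$, reduce to functions of $\nu$ alone. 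The key algebraic fact is that on the invariant manifold the fast part $N\mathbf{R}^N$ contributes nothing to the slow variables to leading order — this is the content built into the proof of Proposition \ref{prop:constancyofh_mainresult} via the Katzenberger framework \cite{kat} — while $N\mathbf{R}^N$ applied to a quadratic function of $\nu$ produces, after the $h\to h_{\textnormal{eq}}$ collapse, precisely the Fleming--Viot sampling term with rate $2\gamma_{\textrm{smpl}}$ from \eqref{defnresamplingrate}; the migration operator $\mathbf{B}^N$ collapses to $B_{\textnormal{avg}}$, and $\mathbf{G}^N_l$ collapses to the corresponding selection/mutation/immigration term. Concretely, one writes, for $F(\nu)=\prod_{l=1}^m\langle f_l,\nu\rangle\in\mathcal{C}_0$ and a lifted function $\hat{F}\in\mathcal{C}_0^q$ with $\hat{F}(\mu)=F(\nu^N)$ when $h=h_{\textnormal{eq}}$, a Taylor expansion in $1/N$ of $\mathbf{A}^N_l\hat{F}(\mu)$ and shows it equals $\mathbf{A}_l F(\nu^N) + o(1)$ uniformly on compacts in $U_{\textnormal{eq}}$, using Assumption \ref{assmp:limitrelationshipondispersal} for the dispersal term and \eqref{lipchitzboundonkappa} for immigration.

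The identification step then runs as follows: for a convergent subsequence $\nu^{N}\Rightarrow\nu$ (along which one may also assume $h^N\to h_{\textnormal{eq}}$ in the relevant sense), pass to the limit in the prelimit martingale relation $F(\nu^N(t)) - F(\nu^N(0)) - \int_0^t \mathbf{A}^N_l\hat{F}(\mu^N(s+t_N))\,ds$ using the uniform approximation $\mathbf{A}^N_l\hat{F}(\mu)\approx \mathbf{A}_l F(\nu^N)$ together with continuity of $\mathbf{A}_l F$ and bounded convergence, to conclude $F(\nu(t)) - F(\nu(0)) - \int_0^t \mathbf{A}_l F(\nu(s))\,ds$ is a martingale for every $F\in\mathcal{C}_0$. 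Since $\mathcal{C}_0$ is a measure-determining, algebra-generating domain and the martingale problem for $\mathbf{A}_l$ is well-posed with continuous paths (by Theorem 3.2 in \cite{EK93}, invoked just before the statement), $\nu$ is the Fleming--Viot process with generator $\mathbf{A}_l$; its initial law $\pi$ is the law of the limit of $\nu^N(0)$, which exists by the established tightness. Uniqueness of the limit point then upgrades subsequential convergence to full convergence, and $\hat{\mu}^N \Rightarrow h_{\textnormal{eq}}\nu$ follows.

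I expect the main obstacle to be the generator approximation near the equilibrium manifold: one has to control $N\mathbf{R}^N$ applied to functions of $\mu$ that are \emph{not} exactly functions of $\nu$ away from $h=h_{\textnormal{eq}}$, so the ``lifted'' test functions $\hat{F}$ must be chosen carefully (incorporating correction terms that vanish on the manifold) so that the $O(N)$ and $O(N^2)$ contributions either cancel or are absorbed into the tight remainder $Z^N$ of \eqref{prop:defxn_mainresult}; making this cancellation rigorous — rather than merely formal — while simultaneously extracting the exact sampling constant $\gamma_{\textrm{smpl}}$ from the second-order terms of $N\mathbf{R}^N$, is where the real work lies. The analyticity hypothesis \ref{mainassumptionsonr}(D) and the stability/irreducibility hypotheses (B),(C) are what make this control possible, via the Katzenberger-type estimates already invoked for Proposition \ref{prop:constancyofh_mainresult}.
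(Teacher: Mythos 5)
Your outline follows the same route as the paper (build a mixed $\mathcal{P}(E)$-valued process $\nu^N$, prove tightness via Theorems 3.9.1 and 3.9.4 of Ethier--Kurtz, identify the limit through the martingale relation, and invoke well-posedness of the limiting martingale problem), and you correctly locate the difficulty in the cancellation of the $O(N)$ contribution of $N\mathbf{R}^N$ to the compensator. But you do not resolve that difficulty, and the idea that resolves it is the heart of the proof. Taking the mixing weights to be the constant vector $v_{\textnormal{eq}}$, or ``correction terms that vanish on the manifold,'' cannot work: off the set $\{h=h_{\textnormal{eq}}\}$ the first-order term of $N\mathbf{R}^N\hat F$ is of size $N\cdot O(\|h-h_{\textnormal{eq}}\|)$, and Proposition \ref{prop:constancyofh_mainresult} only gives $\|h^N(\cdot+t_N)-h_{\textnormal{eq}}\|\Rightarrow 0$, not $o(1/N)$, so a cancellation holding only \emph{at} $h_{\textnormal{eq}}$ leaves an unbounded drift. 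What is needed is a density-dependent weight function $\Lambda\in C^2(\hat U_{\textnormal{eq}},\R^q_*)$ with $\nu^N=\sum_i\Lambda_i(h^N)\mu^N_i$, chosen so that the $O(N)$ term vanishes \emph{identically} on $\mathcal{M}^q_F(E:U_{\textnormal{eq}})$; writing out the first-order Taylor expansion of $N\mathbf{R}^N\hat F$ shows this is equivalent to $\Lambda$ solving the degenerate first-order system $A^T(h)\Lambda(h)+[J\Lambda(h)]\theta(h)=\bar 0_q$ of \eqref{kernelequation}, normalized by $\langle\Lambda(h),h\rangle=1$ and $\Lambda(h_{\textnormal{eq}})=v_{\textnormal{eq}}$. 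This is also not something Katzenberger's theorem provides — that result governs the collapse of the finite-dimensional coordinates $(h^N,Y^N)$ in Proposition \ref{prop:constancyofh_mainresult}, not the generator acting on the lifted test functions.

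Moreover, proving that such a $\Lambda$ exists is itself a substantial piece of work that your proposal leaves untouched: the system is degenerate at $h_{\textnormal{eq}}$ because $\theta(h_{\textnormal{eq}})=\bar 0_q$, and the paper solves it there by a power-series construction (Proposition \ref{prop:analyticsolution}) whose convergence rests on a diagonal-dominance estimate for the block-triangular linear systems determining the Taylor coefficients — this is exactly where the analyticity hypothesis (D) of Assumption \ref{mainassumptionsonr} and the spectral facts of Lemma \ref{lemma:appendix1} enter — and then extends the local solution to all of $\hat U_{\textnormal{eq}}$ by transporting it along the flow $\hat\psi$ (Proposition \ref{prop:extension}). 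Once $\Lambda$ is in hand, the second-order terms of $N\mathbf{R}^N\hat F$ yield the sampling term with the density-dependent rate $\gamma(h)$ of \eqref{defngammah}, which equals $\gamma_{\textrm{smpl}}$ at $h=h_{\textnormal{eq}}$, and the residual terms fall into the class $\Upsilon$ killed by part (B) of Proposition \ref{prop:constancyofh_mainresult}; the remainder of your argument (localization by a stopping time keeping $h^N$ in a compact subset of $U_{\textnormal{eq}}$, tightness, identification, and recovery of $\hat\mu^N\Rightarrow h_{\textnormal{eq}}\nu$ from \eqref{mutonurelation1}) then goes through as you describe.
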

\begin{remark}
The initial distribution $\pi$ of the process $\{ \nu(t) : t \geq 0 \}$ is related to the distribution of $\mu(0)$. This relation is stated in Remark \ref{remark:initialdistribution}. 
\end{remark}
\begin{remark}
\label{rem:morecomplicatedmodels}
In Section \ref{mathmodels} we first defined a basic model and then constructed auxiliary models by adding other mechanisms, one at a time. These other mechanisms are 
position dependent birth and death, offspring dispersal and immigration. One can consider models in which more than one of these mechanisms are simultaneously 
added to the basic model. The proof will demonstrate that the generator of the limiting Fleming-Viot process is then obtained by adding the correct term 
corresponding to each of these additional mechanisms to the operator $\mathbf{A}_0$. This correct term can be seen from the definitions of $\mathbf{A}_1$, $\mathbf{A}_2$ and $\mathbf{A}_3$.
For example, one can have the basic model along with 
position dependent birth and death (Section \ref{sec:positiondependentmodel}) and  offspring dispersal (Section \ref{sec:dispersalmodel}). 
Then the limiting Fleming-Viot process has the generator given by 
\begin{align*}   
\mathbf{A} F(\nu) &= \mathbf{A}_0 F(\nu) + \sum_{l = 1}^{m} \left(  \left\langle b^{s}_{\textnormal{avg}} f_l , \nu \right\rangle - \left\langle b^{s}_{\textnormal{avg}}  , \nu \right\rangle \left\langle  f_l ,\nu \right\rangle  \right)
 \prod_{j \neq l} \langle f_j , \nu \rangle  \\ 
& + \sum_{l = 1}^{m} \left( \left\langle d^{s}_{\textnormal{avg}}  , \nu \right\rangle \left\langle  f_l ,\nu \right\rangle   -  \left\langle d^{s}_{\textnormal{avg}} f_l , \nu \right\rangle   \right)
 \prod_{j \neq l} \langle f_j , \nu \rangle + \sum_{l = 1}^{m} \langle  C_{\textnormal{avg}} f_l , \nu \rangle  \prod_{j \neq l} \langle f_j , \nu \rangle,
\end{align*}
for any $F(\nu) = \prod_{l = 1}^{m} \langle f_l , \nu \rangle \in \mathcal{C}_0$.
\end{remark}

We now give a heuristic explanation of why the dynamics under the models described in Section \ref{mathmodels} converges to a Fleming-Viot process. Note that part (A) of Proposition \ref{prop:constancyofh_mainresult} says that the population density is \emph{pinned} to a constant value $h_{\textnormal{eq}}$ in the limit. Therefore any addition of new mass in the population must be concurrently offset by an equal reduction of existing mass and vice versa. Furthermore, when the mass is reduced or added to keep the balance, this reduction or addition happens at locations that are chosen more or less uniformly from the current empirical measure of the population. This is because the birth and deaths rates of individuals are dominated by a term which is density dependent but location independent. This argument offers some intuition as to why the fast birth-death terms (that form part of the operator $N \mathbf{R}^N$) give rise to the \emph{sampling} term in the limit (the second term in $\mathbf{A}_0$). It also shows why the position dependent birth and death terms in $\mathbf{A}^N_1$ become \emph{selection} terms in $\mathbf{A}_1$ and the offspring dispersal (immigration) term in $\mathbf{A}^N_2$ ($\mathbf{A}^N_3$) becomes a \emph{mutation} term in $\mathbf{A}_2$ ($\mathbf{A}_3$). Since the position of an individual in $E$ can also be seen as its genetic trait, one can interpret the migration on $E$ as genetic mutation. Hence it is not surprising that the migration operators appear as part of the mutation operator in the limiting process. Part (B) of Proposition \ref{prop:constancyofh_mainresult} says that in the limit, all the $q$ sub-populations become spatially inseparable. This causes all the mechanisms in the limiting process to appear in an \emph{averaged} form.

Let $\{ \mu^N(t) = (\mu^N_1(t),\dots, \mu^N_q(t)) : t \geq 0 \}$ be a Markov process with generator $\mathbf{A}^N_l$, for some $l \in \{0,1,2,3\}$. It is difficult to prove the convergence of this process directly because the density regulation mechanism acts on it at the fast timescale. This can be seen by splitting the operator $\mathbf{A}^N_l$ according to \eqref{splittingofanl} and noting that $N \mathbf{R}^N$ becomes unbounded as $N \to \infty$. To pass to the limit we consider another measure-valued process $\{ \nu^N(t) : t \geq 0 \}$ that is constructed by suitably combining the various components of  $\{ \mu^N(t) : t \geq 0 \}$. In particular
\begin{align}
\label{tempformofnuN}
 \nu^N(t) = \sum_{i =1}^q \Lambda_i(h^N(t)) \mu^N_i(t) \ \textnormal{ for } t \geq0,
\end{align}
where $h^N(t) = H( \mu^N(t) )$ and $\Lambda = (\Lambda_1,\dots,\Lambda_q)$ is a function from $\R^q_+$ to $\R^q_+$ which satisfies certain conditions. These conditions are chosen to ensure that  $\{ \nu^N(t) : t \geq 0 \}$ is a $\mathcal{P}(E)$-valued process whose dynamics is such that the density regulation mechanism acts at the slow timescale. Such a function $\Lambda$ can be shown to exist by proving that a certain system of coupled partial differential equations has a solution with some desired properties. This is done in Section \ref{section:pdesolution}. We will then show that as $N \to \infty$ we have $\nu^N \Rightarrow \nu$ where $\{\nu(t) : t \geq 0\}$ is the Fleming-Viot process specified by Theorem \ref{mainresult}. This convergence along with Proposition \ref{prop:constancyofh_mainresult} allow us to prove Theorem \ref{mainresult}.  The details of the proof are given in Section \ref{section:fvconvergence}. 

The discussion in the preceding paragraph also shows that intuitively we can think of the limiting Fleming-Viot process as describing the spatial evolution of a \emph{mixed} population formed by taking a suitable density-dependent linear combination of all the $q$ sub-populations. This is reminiscent of the notion of \emph{virtual} species (formed by linearly combining several chemical species), that are needed in the specification of the reduced models in chemical reaction networks with multiple timescales (see \cite{Petzold}). 

 \section{Applications} \label{applications}

In this section we discuss the applications mentioned in Section \ref{introduction} in greater detail. Note that a Fleming-Viot process usually has continuous paths (see \cite{EK93}). Hence Theorem \ref{mainresult} can be seen as a \emph{diffusion approximation} result which shows that a stochastic process with jumps can be approximated by a process with continuous paths. Such results provide a justification for drawing inferences about the original process (with jumps) by analyzing a more tractable process with continuous paths.

To demonstrate the usefulness of Theorem \ref{mainresult}  we present two examples. In the first example we consider a population genetics model having logistic interactions along with \emph{rare mutation} and \emph{weak selection}. The words \emph{rare} and \emph{weak} indicate that the mutation and selection events occur at a slower timescale than other events.
The difference between this model and a standard population genetics model (Wright-Fisher or Moran) is that the population size is not fixed but fluctuating due to the logistic interactions. Theorem \ref{mainresult} guarantees that by taking the infinite population limit in a suitable way, we obtain a Fleming-Viot process. In many cases this limiting process is well-studied and using its properties one can estimate fixation probabilities, fixation times and the stationary distribution for the finite population model. Our second example sheds light on the phenomenon of cell polarity which refers to the spatial crowding of molecules on the cell membrane. We draw upon our work in \cite{GUPTA} to show that Fleming-Viot  convergence can help us understand how cells establish and maintain polarity. In \cite{GUPTA} we only consider a very simple model, but the results in this paper ensure that the same analysis holds for a general class of models.

\subsection{Logistic model for population genetics} \label{sec:log1}

The logistic growth model is very popular in ecology. 
It was proposed by Verhulst \cite{Verhulst} in 1838 to describe the growth of a population in the presence of competition for resources. 
In this model each individual reproduces at rate $\beta$ and dies at a rate $\rho P /N$, where $P$ is the current population size and $N$ is the 
\emph{carrying capacity} of the habitat. In the deterministic setting, the population size $(P)$ evolves 
as a function of time $(t)$ according to the ordinary differential equation
\[\frac{d P}{d t } = \beta P - \rho \frac{P^2}{N}.\]  
Let $h(t) = P(t)/N$ be the \emph{population density} at time $t$. Then the above differential equation becomes
\begin{align}
\label{logisticode} 
\frac{d h}{dt} = \beta h - \rho h^2.
\end{align}
It is immediate that if $h(0)>0$ then 
$h(t) \to h_{\textnormal{eq}} := \beta/\rho$ as $t \to \infty$.

We now construct a population genetics model that has logistic interactions along with \emph{rare mutation} and \emph{weak selection}. 
Suppose the compact metric space $E$ is the set of all the genetic traits that an individual can have. 
Each individual is given a mass of $1/N$, with $N$ being the carrying capacity as before. 
The population at time $t$ can be represented by the measure
\begin{align}
\label{log:measureN}
\bar{\mu}^N(t) = \frac{1}{N} \sum_{ i = 1 }^{n^N(t)} \delta_{x_i}, 
\end{align}
where $n^N(t)$ is the number of individuals at time $t$ and $x_1,x_2,\dots \in E$ are their genetic traits. Let $b^{s}$ be a continuous function from $E$ to $\R_+$. When the population density (total mass) is $h$, an individual with trait $x \in E$ gives birth at rate $(\beta + b^{s}(x)/N)$ and dies at rate $\rho h$. Its offspring has the same trait $x$ with probability $(1-p(x)/N)$. However with 
probability $p(x)/N$, the offspring is a \emph{mutant} and its trait is chosen according to the distribution $\vartheta(x,\cdot) \in \mathcal{P}(E)$. 
The process $\{ \bar{\mu}^N(t) :  t \geq 0 \}$ can be viewed as a Markov process with state space $\mathcal{M}_{N,a}(E)$ (see \eqref{defnmna}). 
The timescale at which we have described the dynamics is such that the mutation and selection events will vanish in the limit $N \to \infty$.
Therefore to study their effects, we must observe the process at the timescale which is $N$ times slower. Let 
\[ \mu^N(t) = \bar{\mu}^N(N t) \textrm{ for } t \geq 0.\]   
The dynamics of $\{ \mu^N(t) : t \geq 0\}$ has fast density regulation along with position-dependent birth (see Section \ref{sec:positiondependentmodel}) and offspring dispersal mechanism (see Section \ref{sec:dispersalmodel}).
Assuming that $\mu^N(0) \Rightarrow \mu(0)$ as $N \to \infty$ and $\langle 1_E , \mu(0) \rangle > 0 $ a.s.\ Theorem \ref{mainresult} gives us the following. If $t_N$ is a sequence satisfying $t_N \to 0$ and $N t_N \to \infty$, then the process $\mu^N(\cdot + t_N) \Rightarrow h_{\textnormal{eq}} \nu$ as $N \to \infty$, where $h_{\textnormal{eq}} = \beta/\rho$ and $\{ \nu(t) : t \geq 0 \}$ is a Fleming-Viot process with generator given by
\begin{align}
\label{log1:gena}
\mathbf{A} F(\nu) &= \beta \sum_{l = 1}^{m} \left[ \int_{E}  p(x) \left( \int_{E}( f_l(y)-f_l(x)) \vartheta(x,dy) \right)  \nu(dx) \right] \prod_{j \neq l } \langle f_j , \nu \rangle 
\\ & + \sum_{l = 1}^{m} \left( \langle b^s f_l, \nu \rangle - \langle b^s,\nu \rangle \langle f_l, \nu \rangle \right) \prod_{j \neq l } \langle f_j , \nu \rangle \notag \\
& +  \rho \sum_{1 \leq l \neq k \leq m} \left( \left\langle f_{l}f_{k},\nu \right\rangle - \left\langle f_{l},\nu \right\rangle \left\langle f_{k},\nu \right\rangle\right)  \prod_{j \neq l,k} \langle f_j,\nu\rangle \notag
\end{align}
for any $F(\nu) = \prod_{l = 1}^{m} \langle f_l , \nu \rangle$ where $f_1,\dots,f_m \in B(E)$. 
This is of course the generator of a Fleming-Viot process with mutation and fecundity selection. We now present a couple of cases where this process is well-studied.

Suppose that $E = \{1,\dots,K\}$ for some $K \in \N$. Then for any $t \geq 0$ we can express $\nu(t) \in \mathcal{P}(E)$ as the $K$-tuple $(\nu_1(t),\dots,\nu_K(t) )$, where $\nu_i(t)$ is the proportion of individuals having genetic trait $i \in E$. This representation allows us to view $\{\nu(t) : t \geq 0\}$ as a process over the $K$-simplex
\begin{align*}
\Delta_K = \left\{ (x_1,\dots,x_K) : \ x_i \geq 0 \textnormal{ and } \sum_{i =1}^K x_i = 1 \right\}.
\end{align*} 
For all $i,j \in E$ set $\theta_{ij} = \beta p(i) \vartheta(i, \{j\})$ and $\alpha_i = b^{s}(i)$. Then $\{\nu(t) : t \geq 0\}$ is a \emph{diffusion} process over $\Delta_K$ with generator given by
\begin{align*}
\mathbf{A} f(\nu) = \rho \sum_{i,j \in E} \nu_i(\delta_{ij} - \nu_j) \frac{ \partial^2 f (\nu) }{\partial \nu_i \partial \nu_j} + 
\sum_{j \in E} \left(  \sum_{i \in E}  \left( \theta_{ij} \nu_i  + \nu_j \alpha_i  (\delta_{ij}  - \nu_i ) \right)  \right)  \frac{ \partial f (\nu) }{\partial \nu_j} 
\end{align*}
where $f \in C^2(\R^K_+, \R)$, $\nu = (\nu_1,\dots,\nu_K) $ and $\delta_{ij}$ is the Kronecker delta function. This is the generator of the \emph{Wright-Fisher} diffusion process \cite{kimura}. Many explicit results about the fixation probabilities, fixation times and the stationary distribution can be found in \cite{EwensBook}. 

Let us return to the situation where $E$ is a general compact metric space and the dynamics evolves according to \eqref{log1:gena}. Assume that for all $x \in E$ we have $b^{s}(x) = 0$, $p(x) =1$ and $\vartheta(x,\cdot) = \vartheta_0(\cdot)$, for some non-atomic probability measure $\vartheta_0 \in \mathcal{P}(E)$. The resulting Fleming-Viot process $\{\nu(t) : t \geq 0\}$ arises as a reformulation of the infinitely-many-neutral-alleles model due to Kimura and Crow \cite{KC64} (see \cite{EK} and Section 9.2 in \cite{EK93} for more details). In this case, $\nu(t)$ can be written as a countable sum $\sum_{ i = 1}^{\infty} a_i \delta_{x_i}$ for any $t > 0$ (see Theorem 7.2 in \cite{EK93}). This means that at time $t$, $a_i$ fraction of the population is located at $x_i$. Arranging these $a_i$-s in descending order we can extract a process over the ordered infinite simplex
\begin{align*}
\hat{\Delta}_\infty = \left\{ (x_1,x_2,\dots) : \ x_1 \geq x_2 \dots \geq 0 \textnormal{ and } \sum_{i =1}^\infty x_i = 1 \right\}.
\end{align*} 
This extracted process is a diffusion process over $\hat{\Delta}_\infty$ whose various properties are presented in \cite{EK81}. Furthermore in \cite{EK93} it is shown that the Fleming-Viot process $\{\nu(t) : t \geq 0\}$ is ergodic and its unique stationary 
distribution $\Pi \in \mathcal{P}(\mathcal{P}(E))$ is given by
\begin{align}
\label{stdisexplicit}
 \Pi(S) = \P \left( \sum_{i=1}^{\infty} \phi_i \delta_{\xi_i}  \in S \right) \textrm{ for all } S \in \mathcal{B}(\mathcal{P}(E)),
\end{align}
where the infinite vector $(\phi_1,\phi_2,\dots)$ has the Poisson-Dirichlet distribution with parameter $\beta/2\rho$ and $\xi_1,\xi_2,\dots$ are i.i.d.\ with distribution $\vartheta_0$, independent of $(\phi_1,\phi_2,\dots)$. The Poisson-Dirichlet distribution was introduced and studied by Kingman \cite{King75} in 1975.

The results mentioned in the last two paragraphs indicate the behaviour of the \emph{evolutionary} dynamics under our original model for large values of $N$.

\subsection{Cell polarity}\label{sec:cellpolarity}

Cell polarity is an important phenomenon and understanding the mechanisms responsible for it is a matter of fundamental concern for biologists. 
It is widely accepted that polarity is established in $3$ stages (see \cite{DN,AAWW,AAWWref7}), which can be described as follows:
\begin{enumerate}
\item An unpolarized cell receives a spatial cue that may be \emph{intrinsic} (coming from inside the cell) or \emph{extrinsic} (coming from the extracellular environment). 
\item This cue is interpreted by the membrane-bound receptor molecules. 
\item The feedback network inside the cell is activated, which amplifies the weak initial signal into a robust signal that can direct the molecules towards the clustering site. 
\end{enumerate}
The feedback network has two components :  
\emph{positive feedback} which enables the membrane molecules to pull the cytosol molecules to their location on the membrane, and 
\emph{negative feedback} that pushes the membrane molecules into the cytosol. Positive feedback is responsible for the localized recruitment of molecules on the membrane while negative feedback helps in regulating the population size on the membrane. The molecules diffuse slowly on the membrane but rapidly in the cytosol.

Even though the feedback mechanism may bring the molecules together on the membrane, any clusters that form may not persist due to spatial diffusion. This caused some biologists to propose that other additional mechanisms are needed to generate spatial asymmetry (see \cite{AAWWref3, AAWWref14}), but these mechanisms are not always found in cells that exhibit polarity. Hence it is important to investigate if the feedback mechanism can alone counter spatial diffusion to establish cell polarity. For this purpose, Altschuler et. al. \cite{AAWW} formulated a simple model based on the mechanisms mentioned above. We now describe their model. Consider the cell to be a sphere of radius $R$ in $\R^3$. The whole cell has $N$ molecules which may be present on the membrane or in the cytosol. The following four mechanisms change the configuration of molecules in the cell.
\begin{enumerate}
\item \emph{Association mechanism:} Each molecule in the cytosol can move to a uniformly chosen location on the membrane at rate $k_{\textnormal{on}}$. 
\item \emph{Positive feedback:} Each molecule on the membrane pulls another molecule from the cytosol to its location at rate 
$k_{\textnormal{fb}} \times \textrm{(fraction of molecules in the cytosol)}$. 
\item \emph{Negative feedback:} Each molecule on the membrane is pushed into the cytosol at rate $k_{\textnormal{off}}$. 
\item \emph{Spatial migration:} Each membrane molecule is constantly diffusing on the membrane according to an independent Brownian motion with diffusion rate $D$. 
\end{enumerate}
The association mechanism provides the initial spatial cue to trigger cluster formation. In \cite{AAWW}, this spatial cue is intrinsic because the authors are concerned with \emph{spontaneous} cell polarity, which means that polarity is established without any extracellular influence.
Hence the association mechanism acts uniformly on the membrane. When one wants to consider polarity that is established in response to a chemical gradient (see \cite{Weiner}) then a molecule \emph{associating} itself to the membrane must choose its location according to some distribution that encodes the gradient information. We mentioned in Section \ref{introduction}, that the positive feedback mechanism is like a birth process, where the pulled cytosol molecule is the \emph{offspring} of the recruiting membrane molecule. This introduces genealogical relationships between the membrane molecules. A set of membrane molecules are said to belong to a \emph{clan} if they have a common ancestor. Note that when the diffusion rate ($D$) is small, we would expect the clan members to be huddled together. 

The analysis of the above model in \cite{AAWW} gives some interesting results. When the dynamics is described deterministically, using a reaction-diffusion partial differential equation, then the model fails to capture cell polarity. However in the stochastic setting, the model does predict the formation of clusters in certain parameter regimes, when the number of molecules ($N$) is small. This result is proved by showing that the number of clans on the membrane drops to $1$ at certain times. For small $D$, one would observe a cluster at these times. However the frequency of these events is proportional to $N^{-1}$, which indicates that polarity cannot occur in the large population limit $N \to \infty$, unless other mechanisms are present. 

In \cite{GUPTA} we rigorously study this model under a different scaling of parameters. We multiply $k_{\textnormal{fb}}$ and $k_{\textnormal{off}}$ by $N$, leaving $k_{\textnormal{on}}$ and $D$ unchanged.  We keep track of the locations of the membrane molecules as well as their \emph{clan identities}. A clan identity is a number in $[0,1]$ which is passed unaltered from the parent molecule to the offspring. A molecule that associates itself on the membrane is assigned a uniformly chosen clan identity in $[0,1]$. At any time, the molecules on the membrane that have the same clan identity should have a common ancestor and hence they must belong to the same clan. Note that here the number of types ($q$) is equal to $1$ and the population density is the same as the fraction of cell molecules that are on the membrane. Let $E = \hat{E} \times [0,1]$, where $\hat{E}$ is the membrane (sphere of radius $R$ in $\R^3$).  When the number of molecules is $N$, the population dynamics is described by a $\mathcal{M}_{N,a}(E)$-valued process $\{\mu^N(t) : t \geq 0\}$ as before. For any $h \in \R_+$ let $\beta(h) = k_{\textnormal{fb}} (1-h) $, $\rho(h) = k_{\textnormal{off}}$ and $\kappa(h) = k_{\textnormal{on}}(1-h)$. Let $\Theta \in \mathcal{P}(E)$ be the uniform distribution on $E$ and let the spatial migration operator $B$ (see Section \ref{migrationmechanism}) be $(D/2)\Delta$, where $\Delta$ denotes the Laplace-Beltrami operator on the sphere $\hat{E}$. For any $f : \hat{E} \times [0,1] \to \R$, $\Delta$ acts on $f$ only as the function of the first coordinate. The operator $(D/2) \Delta$ is just the generator of the Brownian motion on $\hat{E}$ with diffusion rate $D$. With this notation one can verify that this model is a special case of the model in Section \ref{sec:immmigrationmodel}. The association mechanism is analogous to immigration while the feedback mechanism gives rise to the density regulation mechanism. Theorem \ref{mainresult} (see also Theorem 2.3 in \cite{GUPTA}) shows that as $N \to \infty$ we have $\mu^N \Rightarrow h_{\textnormal{eq}} \nu$ where 
\[h_{\textnormal{eq}}  = 1 -\frac{k_{\textnormal{off}}}{k_{\textnormal{fb}} }\]
and $\{\nu(t): t \geq 0\}$ is a $\mathcal{P}(E)$-valued Fleming-Viot process with generator 
\begin{align*}
\mathbf{A} F(\nu) &= \frac{D}{2} \sum_{l = 1}^{m} \langle \Delta  f_l,\nu \rangle \prod_{j \neq l } \langle f_j , \nu \rangle  
+ \frac{k_{\textnormal{off}}}{h_{\textnormal{eq}}}\sum_{1 \leq l \neq k \leq m} \left( \left\langle f_{l}f_{k},\nu \right\rangle - \left\langle f_{l},\nu \right\rangle \left\langle f_{k},\nu \right\rangle
 \right)  \prod_{j \neq l,k} \langle f_j,\nu\rangle \\
&+ k_{\textnormal{on}} \frac{ (1 -h_{\textnormal{eq}}) }{h_{\textnormal{eq}}  } \sum_{l = 1}^{m}  \left(  \int_{E}\int_{E} (f_l(y) - f_l(x))\Theta(dy)  \nu(dx) \right)   \prod_{j \neq l} \langle f_j , \nu \rangle
 \end{align*}
for any $F(\nu) = \prod_{l = 1}^{m} \langle f_l , \nu \rangle \in \mathcal{C}_0$. It can be shown that this Fleming-Viot process is ergodic and has a unique stationary distribution in $\mathcal{P}(\mathcal{P}(E))$ (see Section 5 in \cite{EK93} and Proposition 2.5 in \cite{GUPTA}). To study the evolution of the clan sizes we define a $\mathcal{P}([0,1])$-valued process $\{\nu_c(t) : t \geq 0\}$ by 
\begin{align*}
\nu_c(t,S) = \nu(t, \hat{E} \times S), \  S \in \mathcal{B}([0,1]).
\end{align*}
This is a Fleming-Viot process that describes the infinitely-many-neutral-alleles model (recall the discussion in Section \ref{sec:log1}). Therefore for any $t > 0$, we can write $\nu_c(t) = \sum_{i = 1}^{\infty} a_i \delta_{x_i}$, which means that $a_i$ fraction of the population has clan identity $x_i$. At stationarity, the clan sizes (arranged in descending order) are distributed according to the Poisson-Dirichlet distribution with parameter $\alpha = k_{\textnormal{on}}/ k_{\textnormal{fb}} $. Properties of the Poisson-Dirichlet distribution (see \cite{FengPoisson}) tell us that for any small $\epsilon > 0$, there is a positive probability of the largest clan having size greater than $(1- \epsilon)$. Furthermore one can show that at stationarity the molecules in each clan are concentrated on a circular patch on the membrane. The square of the radius of this patch can be approximately computed as (see Theorem 2.7 in \cite{GUPTA})
\begin{align*}
 \frac{ 2D }{\left(  \frac{( k_{\textnormal{on}} + k_{\textnormal{fb}}) k_{\textnormal{off}}}{ ( k_{\textnormal{fb}} - k_{\textnormal{off}} )  }  + \frac{D}{R^2}\right) }. 
\end{align*}
The last two assertions imply that if $D$ is small in comparison to $R^2$, then at stationarity there is a positive probability that most of the membrane molecules are in one clan and that clan is spread over a small area on the membrane. Due to ergodicity this event will occur infinitely often in any trajectory of the process $\{\nu(t): t \geq 0\}$. Whenever this event happens we can expect the cell to be \emph{polarized}. Therefore the limiting process exhibits recurring cell polarity. In \cite{GUPTA} we discuss how the frequency of observing polarity depends on various model parameters.

The above analysis shows that if the feedback mechanism is strong enough, it can counter spatial diffusion to generate cell polarity. However this conclusion is based on a highly simplified model. As mentioned in Section \ref{introduction}, most cells that exhibit polarity have complicated feedback circuits, with molecules of several types pulling each other on and off the membrane at various type-dependent rates. These different types of molecules may also have their own migration and association mechanisms. It would be interesting to know if the above analysis can be extended to general multi-type models for cell polarity. The results in this paper show that this can indeed be done as long as the feedback mechanism satisfies the assumptions in Section \ref{density_regulation_mechanism}, and acts at a faster timescale than the association and migration mechanisms. In this case, Theorem \ref{mainresult} guarantees convergence to a Fleming-Viot process and this limiting process can then be analyzed in the same way as in \cite{GUPTA}. This enables us to draw similar conclusions about the onset of cell polarity in this multi-type setting.

\section{Proofs}

\subsection{Well-posedness of the martingale problems for $\mathbf{A}^N_l$}
\label{sec:wellposedness}
 Recall the definitions of the operators $\mathbf{A}^N_0,\mathbf{A}^N_1,\mathbf{A}^N_2$ and $\mathbf{A}^N_3$ from \eqref{defn:an0}, \eqref{defn:an1}, \eqref{defn:an2} and \eqref{defn:an3}. In this section we prove Proposition \ref{prop:wellposedness} which says that the martingale problem for these operators is well-posed in the space $D_{\mathcal{M}^q_{N,a} (E)}[0,\infty)$. Pick a $l \in \{0,1,2,3\}$.
If we do not allow the dynamics under $\mathbf{A}^N_l$ to leave a compact set of $\mathcal{M}^q_{N,a} (E)$, then $\mathbf{A}^N_l$ can be viewed as a bounded perturbation of the migration operator $\mathbf{B}^N$ (given by \eqref{maindefbn}). The well-posedness of the corresponding martingale problem is immediate from the well-posedness of the martingale problem for $\mathbf{B}^N$ (see Chapter 4 in Ethier and Kurtz \cite{EK}). In our case, the dynamics under $\mathbf{A}^N_l$ may exit any compact set of $\mathcal{M}^q_{N,a} (E)$. However we can still argue the well-posedness of the corresponding martingale problem by showing that this exit time tends to infinity as the compact set gets bigger and bigger in size. We now make these ideas precise.   

\begin{lemma}
\label{lemma:stoppingtimesinfinity}
Fix a $l \in \{ 0,1,2,3\}$ , $N \in \N$ and $\pi \in \mathcal{P}\left( \mathcal{M}^q_{N,a}(E) \right)$. For each $k \in \N$ let $\{ \mu_k(t) : t \geq 0 \}$ be a 
$ \mathcal{M}^q_{N,a}(E)$-valued process with initial distribution $\pi$. Define a stopping time
\begin{align}
\label{defnstippingtimetauk}
 \tau_k = \inf \left\{ t \geq 0 : \| H(\mu_k(t-))\|_{1} \geq k \textrm{ or }   \| H(\mu_k(t))\|_1 \geq k \right\},
\end{align}
where $H$ is the density map \eqref{defn:densitymap}.
Suppose that for each $k \in \N$ and $\hat{F}  \in \mathcal{C}^q_0$
\begin{align*}
\hat{F} (\mu_k(t \wedge \tau_k)) - \hat{F} (\mu_k(0)) - \int_{0}^{t \wedge \tau_k} \mathbf{A}^N_l \hat{F} (\mu_k(s))ds
\end{align*}
is a martingale. Then for any $t \geq 0$
\begin{align*}
\lim_{k  \to \infty} \P \left(  \tau_k \leq t\right) = 0.
\end{align*}
\end{lemma}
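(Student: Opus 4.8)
The goal is to show that the explosion time $\tau_k$ (the first time the total density $\|H(\mu_k(t))\|_1$ reaches level $k$) satisfies $\P(\tau_k \le t) \to 0$ as $k \to \infty$, uniformly on bounded time intervals. The natural tool is a Lyapunov/moment estimate on the total mass $S^N_k(t) := \|H(\mu_k(t\wedge\tau_k))\|_1 = \sum_{i=1}^q h^N_{k,i}(t\wedge\tau_k)$, obtained by feeding a suitable test function into the martingale property and applying Gronwall. The subtlety is that the natural test function $\mu \mapsto \langle 1_E, \mu_i\rangle$ is \emph{not} in $\mathcal{C}^q_0$ in an obvious way (it is $\hat F$ with $m=1$, $c_i \equiv$ an indicator, but $c$ must satisfy \eqref{coefficientcondition}, which forces boundedness of $\langle c(h),h\rangle$; the linear functional itself is unbounded). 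So one works with a truncated/stopped version, exploiting that we have already stopped at $\tau_k$ where the density is bounded by $k$.

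First I would take, for fixed $k$, the function $\hat F_k \in \mathcal{C}^q_0$ of the form \eqref{hatfform} with $m=1$, $f_1 = 1_E$, and $c = c_k$ a smooth function with $c_k(h) = (1,\dots,1)$ for $\|h\|_1 \le k+1$ and $c_k$ compactly supported so that \eqref{coefficientcondition} holds; then $\hat F_k(\mu) = \sum_i \langle 1_E,\mu_i\rangle = \|H(\mu)\|_1$ whenever $\|H(\mu)\|_1 \le k+1$, in particular on $\{t < \tau_k\}$ and at $t = \tau_k$ by left-continuity of the stopped process up to the jump. Using the decomposition \eqref{splittingofanl}, $\mathbf{A}^N_l = \mathbf{B}^N + N\mathbf{R}^N + \mathbf{G}^N_l$: the migration term $\mathbf{B}^N \hat F_k$ vanishes on the relevant region since $B_i 1_E = 0$ (as $1_E \in \mathcal{D}(B_i)$ and $B_i$ generates a conservative Feller semigroup, constants are in the kernel); the $N\mathbf{R}^N$ term applied to the linear functional $\|H(\cdot)\|_1$ gives exactly $N\sum_{i}\theta_i(h) \le N C_\theta \|h\|_1$ by \eqref{controlovertheta}; and each $\mathbf{G}^N_l$ term, on the stopped region, contributes at most a linear-in-$\|h\|_1$ bound — for $l=0$ it is zero, for $l=1$ the birth/death terms $b^s_{ij}, d^s_i$ are bounded so the contribution is $O(\|h\|_1)$, for $l=2$ dispersal preserves total mass so the net effect on $\langle 1_E,\cdot\rangle$ is again controlled by $\beta_{ij}$ bounded times $\|h\|_1$, and for $l=3$ immigration adds $N\sum_i \kappa_i(h)$ which is $O(N(1+\|h\|_1))$ by \eqref{lipchitzboundonkappa}. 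Collecting, on $\{s < \tau_k\}$ one gets $|\mathbf{A}^N_l \hat F_k(\mu_k(s))| \le N C (1 + \|H(\mu_k(s))\|_1)$ for a constant $C = C(l,q,N)$ independent of $k$ (or with the $N$ absorbed into the constant since $N$ is fixed here).

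Next, from the martingale hypothesis, $\E[\hat F_k(\mu_k(t\wedge\tau_k))] = \E[\hat F_k(\mu_k(0))] + \E\!\int_0^{t\wedge\tau_k}\! \mathbf{A}^N_l \hat F_k(\mu_k(s))\,ds$. Writing $\phi(t) := \E[\|H(\mu_k(t\wedge\tau_k))\|_1 \wedge (k+1)]$ — or better, since on $[0,\tau_k)$ the function equals the true mass and the integrand bound is integrable, one obtains $\E[\|H(\mu_k(t\wedge\tau_k))\|_1] \le \E\|H(\mu_k(0))\|_1 + C\E\!\int_0^{t}\!(1 + \|H(\mu_k(s\wedge\tau_k))\|_1)\,ds$, hence by Gronwall $\E[\|H(\mu_k(t\wedge\tau_k))\|_1] \le (m_0 + Ct)e^{Ct} =: \Phi(t)$, where $m_0 := \E\|H(\mu_k(0))\|_1 = \int \|H(\mu)\|_1\,\pi(d\mu)$ is finite (it is finite because $\pi \in \mathcal{P}(\mathcal{M}^q_{N,a}(E))$ — actually one should note $m_0$ could a priori be infinite if $\pi$ has heavy tails; I would either assume $m_0 < \infty$, which is harmless and standard, or note that $\pi$ is supported on atomic measures and argue via a first-exit-of-$\{m_0$-ball$\}$ decomposition; the cleanest route is to observe $\Phi$ does not depend on $k$). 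Finally, on the event $\{\tau_k \le t\}$ we have $\|H(\mu_k(\tau_k))\|_1 \ge k$ (or the left limit does, but the bound $\ge k$ holds at $\tau_k\wedge t$ in either case since $t\wedge\tau_k = \tau_k$ there and the stopped value is $\ge k$), so by Markov's inequality $\P(\tau_k \le t) \le \Phi(t)/k \to 0$ as $k\to\infty$.

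\textbf{Main obstacle.} The genuinely delicate point is the bookkeeping that justifies replacing the unbounded linear functional $\|H(\cdot)\|_1$ by a bonafide element $\hat F_k$ of the admissible class $\mathcal{C}^q_0$ (which requires \eqref{coefficientcondition}) while ensuring that (i) $\hat F_k$ agrees with the true total mass everywhere that matters along the stopped path — including handling the single jump at $\tau_k$, where $\|H\|_1$ can overshoot $k$ by at most $2/N$, so one needs the agreement region to extend a little past $k$, say to $k+1$, which is why $c_k$ is chosen $\equiv \bar 1_q$ on $\{\|h\|_1 \le k+1\}$ — and (ii) the generator bound $|\mathbf{A}^N_l \hat F_k| \le C(1+\|h\|_1)$ holds on the stopped region with $C$ \emph{independent of} $k$. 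The $N\mathbf{R}^N$ computation — verifying that the second-order terms in $\hat F_k$ (which are absent here since $m=1$, so the sampling-type terms do not appear) do not spoil the linear bound, and that the action of $N\mathbf{R}^N$ on the linear part is precisely $N\langle\theta(h),\bar 1_q\rangle$ up to the truncation — is the part I would write out most carefully; everything else is routine Gronwall and Markov.
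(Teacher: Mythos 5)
Your overall strategy is exactly the paper's: replace the unbounded linear functional $\|H(\cdot)\|_1$ by a truncated version $\hat F^k(\mu)=\langle c_k(h),h\rangle$ lying in $\mathcal{C}^q_0$, check via the splitting \eqref{splittingofanl} that $\mathbf{A}^N_l\hat F^k\leq C(1+\langle c_k(h),h\rangle)$ with $C$ independent of $k$ (the paper does not even bother to smooth $c_k$; the indicator cutoff at $\|h\|_1<2k$ is admissible since \eqref{coefficientcondition} only asks for boundedness of $\langle c(h),h\rangle$), then Gronwall and Markov. Your handling of the overshoot at $\tau_k$ via the margin in the cutoff matches the paper's observation that jumps of $h$ have size $1/N$, so $\|h_k(t\wedge\tau_k)\|_1<2k$ and the event $\{\tau_k\leq t\}$ coincides with $\{\hat F^k(\mu_k(t\wedge\tau_k))\geq k\}$ for large $k$.

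The one genuine gap is the point you flag but do not resolve: your Gronwall bound $\Phi(t)=(m_0+Ct)e^{Ct}$ is vacuous unless $m_0=\mathbb{E}\|H(\mu_k(0))\|_1<\infty$, and the lemma assumes only $\pi\in\mathcal{P}(\mathcal{M}^q_{N,a}(E))$, so $m_0$ may be infinite. Adding the hypothesis $m_0<\infty$ changes the statement (and the lemma is invoked in the well-posedness proof for arbitrary $\pi$), while your alternative suggestions are too vague to count as a proof. The paper's fix is a tightness argument applied to the \emph{initial condition as well}: given $\epsilon>0$, choose $k_\epsilon$ with $\P(\|h_k(0)\|_1>k_\epsilon)<\epsilon$; then for $k\geq k_\epsilon/\epsilon$ the truncated initial expectation satisfies $\mathbb{E}\langle c_k(h_k(0)),h_k(0)\rangle\leq k_\epsilon+2k\epsilon\leq 3\epsilon k$, which is always finite and, crucially, is $o(k)$ up to the factor $3\epsilon$. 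Gronwall then gives $\mathbb{E}\,\hat F^k(\mu_k(t\wedge\tau_k))\leq(3k\epsilon+Cqt)e^{Cqt}$, Markov gives $\limsup_k\P(\tau_k\leq t)\leq 3\epsilon e^{Cqt}$, and letting $\epsilon\to 0$ finishes. So the missing idea is precisely that one must truncate the expectation of the initial mass and exploit tightness of $\pi$ rather than assume a finite first moment; the rest of your argument is sound.
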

\begin{proof}
Let $\{ h_k(t) = H(\mu_k(t)) : t \geq 0\}$ be the density process corresponding to $\mu_k$ and let $c_k : \R^q_{+} \to \R^q_{+}$ be the function defined by
\begin{align}
\label{defckhvector}
c_k(h) = (c_{k,1}(h),\dots,c_{k,q}(h))  = \left\{
\begin{array}{cc}
\bar{1}_q  & \textrm{ if } \| h \|_1 < 2k \\
\bar{0}_q & \textrm{ otherwise }. 
\end{array}\right.
\end{align}

Pick any $\epsilon \in (0,1)$. Since for each $k \in \N$ the distribution of $\mu_k(0)$ is $\pi$ there must exist a $k_{\epsilon} > 0$ such that
\begin{align*}
\P \left( \| h_k(0) \|_1 > k_{\epsilon} \right) < \epsilon \textrm{ for all } k \in \N. 
\end{align*} 
If $k$ satisfies $k_{\epsilon} \leq k \epsilon$ then
\begin{align}
\label{initialboundonh0expectation}
\E \left( \left\langle  c_k(h_k(0)), h_k(0)  \right\rangle \right) \leq k_{\epsilon}\P \left( \| h_k(0)\|_1 \leq k_{\epsilon} \right) + 
2k \P \left( \| h_k(0)\|_1 > k_{\epsilon} \right) \leq 3 \epsilon k. 
\end{align}

For each $i \in Q$ and $k \in \N$, define a function $\hat{F}^k_i \in \mathcal{C}^q_0$ by $\hat{F}^k_i(\mu) = c_{k,i}(h)h_i$ where $h = H(\mu)$. 
From \eqref{splittingofanl} we know that for any $\mu \in \mathcal{M}^q_{N,a}(E)$
\begin{align*}
\mathbf{A}^N_l \hat{F}^k_i(\mu) =  \mathbf{B}^N  \hat{F}^k_i(\mu) + N \mathbf{R}^N \hat{F}^k_i(\mu) + \mathbf{G}^N_l \hat{F}^k_i(\mu). 
\end{align*}
One can easily verify that $\mathbf{B}^N \hat{F}^k_i(\mu)=0$,  $\mathbf{R}^N \hat{F}^k_i(\mu) = c_{k,i}(h)\theta_i(h)$, $\mathbf{G}^N_0 \hat{F}^k_i(\mu) = 0$, $\mathbf{G}^N_2 \hat{F}^k_i(\mu) = 0$, 
$\mathbf{G}^N_3 \hat{F}^k_i(\mu) = c_{k,i}(h)\kappa_i(h)$ and finally
\[ \mathbf{G}^N_1 \hat{F}^k_i(\mu) = c_{k,i}(h) \left( \sum_{j \in Q } \int_{E}  b^{s}_{j i}(x,h)\mu_j(dx) -\int_{E } d^{s}_{i}(x,h) \mu_i(dx) \right).\] 
Note that for each $i,j \in Q$, the functions $ b^{s}_{ji}$, $d^s_i$ are bounded, while the functions 
$\theta_i$ and $\kappa_i$ satisfy \eqref{controlovertheta} and \eqref{lipchitzboundonkappa}. 
This implies that there exists a positive constant $C$ (depending on $N$ and $l$) such that
\begin{align}
\label{estimateofanl}
\mathbf{A}^N_l \hat{F}^k_i(\mu) \leq C \left( 1+  \langle c_k(h), h \rangle \right) \textrm{ for all } \mu \in \mathcal{M}^q_{N,a}(E).
\end{align}
By the assumption stated in the statement of this lemma we can say that
\begin{align*}
\hat{F}^k_i(\mu_k(t \wedge \tau_k)) -  \hat{F}^k_i(\mu_k(0)) - \int_{0}^{t \wedge \tau_k} \mathbf{A}^N_l \hat{F}^k_i(\mu_k(s))ds
\end{align*}
is a martingale starting at $0$. Taking expectations we get 
\begin{align}
\label{expectationoffik}
 \E \left( \hat{F}^k_i(\mu_k(t \wedge \tau_k)) \right) = \E \left( \hat{F}^k_i(\mu_k(0))  \right) + \E \left( \int_{0}^{t \wedge \tau_k}  \mathbf{A}^N_l \hat{F}^k_i(\mu_k(s))  ds \right).
\end{align}
Let $\hat{F}^k : \mathcal{M}^q_F(E) \to \R$ be given by $\hat{F}^k(\mu) = \sum_{i \in Q} \hat{F}^k_i(\mu) = \langle c_k(h), h \rangle$. 
Then summing over $i \in Q$ in \eqref{expectationoffik} and using \eqref{estimateofanl} we arrive at 
\begin{align*}
\E \left( \hat{F}^k(\mu_k(t \wedge \tau_k)) \right) & \leq \E \left( \hat{F}^k(\mu_k(0))  \right) + 
C q\int_{0}^{t} \left[ 1 + \E\left( \hat{F}^k( \mu_k(s \wedge \tau_k)) \right)\right]ds .
\end{align*}
From \eqref{initialboundonh0expectation} and Gronwall's inequality, for $k \geq k_{\epsilon}/ \epsilon$ we obtain
\begin{align*}
 \E \left( \hat{F}^k(\mu_k(t \wedge \tau_k)) \right) \leq  \left( 3 k \epsilon + C q t \right)e^{C q t}.
\end{align*}
Then by Markov's inequality 
\begin{align}
\label{app:markovinequality}
\lim_{k \to \infty} \P \left( \hat{F}^k(\mu_k(t \wedge \tau_k))  \geq k\right)  & \leq  \lim_{k \to \infty} \frac{ \E \left(\hat{F}^k(\mu_k(t \wedge \tau_k)) \right)}{k} \notag \\& \leq \lim_{k \to \infty} \frac{  \left( 3 k \epsilon + C qt \right)e^{C q t}}{k} \notag \\& \leq 3 \epsilon e^{C q t} .
\end{align}
Observe that 
\begin{align*}
 \P \left( \tau_k \leq t \right) & =   \P \left(  \left\| h_k(t \wedge \tau_k) \right\|_1 \geq k\right)  
 \\ &= \P \left( \hat{F}^k(\mu_k(t \wedge \tau_k))  \geq k\right) + \P \left(  \left\| h_k(t \wedge \tau_k) \right\|_1 \geq 2 k\right).
\end{align*}
For large $k$, the second probability on the right is $0$ because of the following reason.
The process $h_k$ has jumps of size $1/N$ and hence the definition of $\tau_k$ (see \eqref{defnstippingtimetauk}) implies that $\left\| h_k(t \wedge \tau_k) \right\|_1  \leq k + (1/N) < 2k$. Therefore using \eqref{app:markovinequality} we get
\begin{align*}
 \lim_{k \to \infty} \P \left( \tau_k \leq t \right) =  \lim_{k \to \infty} \P \left( \hat{F}^k(\mu_k(t \wedge \tau_k))  \geq k\right)  \leq 3 \epsilon e^{C q t}.
 \end{align*}
Letting $\epsilon \to 0$ proves the lemma.
\end{proof}

\begin{proof}[Proof of Proposition \ref{prop:wellposedness}]
Fix a $N \in \N$ and a $l \in \{0,1,2,3\}$. The space $\mathcal{M}^q_{N,a}(E)$ is complete and separable and for each $k \in \N$ the set 
\begin{align}
\label{defsetuk}
U_k = \left\{  \mu \in \mathcal{M}^q_{N,a} (E) :  \|H(\mu)\|_1 < k\right\}
\end{align}
is open with a compact closure in $\mathcal{M}^q_F(E)$. Define an operator $\mathbf{L}_k : \mathcal{D}(\mathbf{L}_k) = \mathcal{C}^q_0 \to B\left( \mathcal{M}^q_{N,a}(E)\right)$ by
\[\mathbf{L}_k \hat{F}(\mu) =  \mathbf{B}^N \hat{F}(\mu) + \ind_{U_k}(\mu) \left( N \mathbf{R}^N \hat{F}(\mu)+ \mathbf{G}^N_l \hat{F}(\mu)  \right),\]
for any $\hat{F} \in \mathcal{C}^q_0$. The operator $\mathbf{L}_k$ can be seen as a bounded perturbation of the operator $\mathbf{B}^N$. We argued in Section \ref{migrationmechanism} that the martingale problem for $\mathbf{B}^N$ is well-posed.
From Theorem 4.10.3 in Ethier and Kurtz \cite{EK}, the martingale problem for 
$\mathbf{L}_k$ is well-posed for each $k \in \N$. Pick a $\pi \in \mathcal{P}\left( \mathcal{M}^q_{N,a}(E) \right)$ and let $\{ \mu_k(t) : t \geq 0\}$ be the unique solution 
to the martingale problem for $(\mathbf{L}_k, \pi)$. Define a stopping time by
\begin{align*}
\tau_k = \inf\{ t \geq 0 : \mu_k(t) \notin U_k \textrm{ or } \mu_k(t-) \notin U_k \}. 
\end{align*}
Then for any $\hat{F} \in \mathcal{C}^q_0$
\begin{align*}
\hat{F}(\mu_k(t )) -  \hat{F}(\mu_k(0)) - \int_{0}^{t} \mathbf{L}_k \hat{F}(\mu_k(s))ds
\end{align*}
is a martingale. From \eqref{splittingofanl} one can see that if $\mu \in U_k$ then $\mathbf{A}^N_l \hat{F}(\mu) = \mathbf{L}_k \hat{F}(\mu)$.
Using the optional sampling theorem we get that
\begin{align*}
& \hat{F}(\mu_k(t \wedge \tau_k)) -  \hat{F}(\mu_k(0)) - \int_{0}^{t\wedge \tau_k} \mathbf{L}_k \hat{F}(\mu_k(s))ds 
\\& = \hat{F}(\mu_k(t \wedge \tau_k)) -  \hat{F}(\mu_k(0)) - \int_{0}^{t\wedge \tau_k} \mathbf{A}^N_l \hat{F}(\mu_k(s))ds 
\end{align*} 
is a martingale.
Lemma \ref{lemma:stoppingtimesinfinity} ensures that for any $t \geq 0$,
\begin{align*}
\lim_{k \to \infty} \P(\tau_k \leq t) = 0. 
\end{align*}
From Theorem 4.6.3 in Ethier and Kurtz \cite{EK} we can conclude that there exists a unique solution to the martingale problem for $(\mathbf{A}^N_l,\pi)$.
\end{proof}

\subsection{Properties of the limiting process}
\label{propertiesoflimitingprocesses}

The goal of this section is to prove Proposition \ref{prop:constancyofh_mainresult} which gives important insights into the limiting behaviour of the dynamics under the models described in Section \ref{mathmodels}. As mentioned in Section \ref{mainresultsection} our proof of Proposition \ref{prop:constancyofh_mainresult} will rely on the work of Katzenberger \cite{kat} which studies how semimartingales behave when they are driven by a fast drift that forces them to stay on a stable manifold. Before we can use the framework in \cite{kat} we need to prove some preliminary results. We start by recalling a tightness condition for semimartingales.
\begin{condition}
\label{NICE}
Let $\{Z^N : N \in \N \}$ be a sequence of $\{\mathcal{F}^N_t\}$-semimartingales with paths in $D_{\R^d}[0,\infty)$. Assume that for all $T>0$
\begin{align}
\label{cond:discontinuitiesdisappaer}
 \sup_{0 \leq t \leq T} \left\| Z^N(t) - Z^N(t-)\right\| \Rightarrow 0 \textrm{ as } N \to \infty.
\end{align}
Moreover assume that each semimartingale $Z^N$ can be written as
\[Z^N(t) = M^N(t) + \int_{0}^{t}A^N(s)ds  \]
where $M^N$ is a square integrable  $\{\mathcal{F}^N_t\}$-martingale and $A^N$ is a $\{\mathcal{F}^N_t\}$ adapted process satisfying
\[\sup_{N \in \N} \E \left( [M^N]_t + \int_{0}^{t} \left\| A^N(s) \right\| ds \right)  < \infty\]
for each $t \geq 0$, where $[M^N]_t$ is the quadratic variation of the martingale $M^N$.  
\end{condition}
\begin{remark}
\label{remark:nice}
If a sequence of semimartingales $\{Z^N : N \in \N \}$ satisfies Condition \ref{NICE} then this sequence is tight in $D_{\R^d}[0,\infty)$ in the Skorohod topology 
(see Corollary 2.3.3 in Joffe and Metivier \cite{Joffe}) and any limit point $Z$ is a semimartingale with continuous sample paths. 
\end{remark}

\begin{lemma}
\label{lemma:semproperty}
Pick a $l \in \{0,1,2,3\}$, $i \in Q$ and $f \in \mathcal{D}_0$. For each $N \in \N$, let $\{ \mu^N(t) : t \geq 0 \}$ be a solution of the martingale problem for $\mathbf{A}^N_l$. 
Define a real-valued process $\{ Z^{N}(t) : t \geq 0\}$ by 
\begin{align*}
Z^{N}(t) =&  \langle f,\mu^N_i(t) \rangle  - \langle f,\mu^N_i(0) \rangle \\
&- N \int_{0}^{t} \left[  \sum_{j \in Q}  \beta_{ji}(h^N(s)) \langle f,\mu^N_j(s) \rangle - \rho_i(h^N(s))  \langle f,\mu^N_i(s) \rangle  \right]ds,
\end{align*}
where $h^N(t) = H(\mu^N(t))$. Then $Z^N$ is a semimartingale with respect to the filtration generated by $\{ \mu^N(t) : t \geq 0 \}$. 
For any compact $K \subset \R^q_{+} $ define
\begin{align}
\label{defoflambdank}
\lambda^N(K) = \inf \left\{ t \geq 0 : h^N(t-) \notin \stackrel{\mathrm{o}}{K} \textrm{ or } h^N(t) \notin \stackrel{\mathrm{o}}{K}   \right\},
\end{align}
where $\stackrel{\mathrm{o}}{K}$ denotes the interior of the set $K$. If $Z^N_K$ is the semimartingale given by $Z^N_K(t) = Z^{N}(t \wedge \lambda^N(K))$ for $t\geq 0$, 
then the sequence of semimartingales $\{Z^N_K : N \in \N \}$ satisfies Condition \ref{NICE}.
\end{lemma}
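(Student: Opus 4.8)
The plan is to realize $Z^N$, up to the stopping time $\lambda^N(K)$, as a compensated functional of $\mu^N$ and then read off the properties in Condition \ref{NICE} from the special scaling structure of $\mathbf{A}^N_l$. The only real obstruction to doing this directly is that $\mu \mapsto \langle f,\mu_i\rangle$ is unbounded on $\mathcal{M}^q_F(E)$ and so does not lie in the domain $\mathcal{C}^q_0$ of $\mathbf{A}^N_l$; this is circumvented by a density cutoff that the dynamics cannot see before time $\lambda^N(K)$.

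\emph{Step 1 (formal action of $\mathbf{A}^N_l$).} Writing purely formally $F_f(\mu)=\langle f,\mu_i\rangle$ and using the splitting \eqref{splittingofanl}, $\mathbf{A}^N_l=\mathbf{B}^N+N\mathbf{R}^N+\mathbf{G}^N_l$, a direct computation gives $\mathbf{B}^N F_f(\mu)=\langle B_i f,\mu_i\rangle$, $N\mathbf{R}^N F_f(\mu)=N\big(\sum_{j\in Q}\beta_{ji}(h)\langle f,\mu_j\rangle-\rho_i(h)\langle f,\mu_i\rangle\big)$ with $h=H(\mu)$ (exactly $N$ times the integrand defining $Z^N$), and $\mathbf{G}^N_l F_f(\mu)$ is, for each $l$, an explicit expression that is bounded on $\{\mu:\|H(\mu)\|_1\le C\}$ uniformly in $N$: for $l\in\{0,1\}$ this is immediate from boundedness of $b^s_{ij},d^s_i$; for $l=2$ one uses that $N p^N_{i'i}(x)\int(f(y)-f(x))\vartheta^N_{i'i}(x,dy)$ is bounded uniformly in $N$ and $x$ by Assumption \ref{assmp:limitrelationshipondispersal}; for $l=3$ one uses continuity of $\kappa_i$.

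\emph{Step 2 (localization and the semimartingale decomposition).} Fix a compact $K\subset\R^q_+$. For each $N$ choose a continuous $\phi_N:\R^q_+\to[0,1]$ equal to $1$ on the $1/N$-neighbourhood $K_{1/N}$ of $K$ and supported in $K_{2/N}$; since $K_{2/N}\subset K_1$ for all $N\ge 1$, the function $\hat F_N(\mu):=\phi_N(H(\mu))\langle f,\mu_i\rangle$ and its square lie in $\mathcal{C}^q_0$ with \eqref{coefficientcondition} holding uniformly in $N$. Hence $\hat F_N(\mu^N(t))-\hat F_N(\mu^N(0))-\int_0^t\mathbf{A}^N_l\hat F_N(\mu^N(s))\,ds$ is a bounded (hence square integrable) martingale; stop it at $\lambda^N(K)$ to get a martingale $M^N$. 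Because $\mathbf{A}^N_l$ only evaluates its argument at measures whose density differs from $H(\mu)$ by at most $1/N$, on $\{s<\lambda^N(K)\}$ the cutoff $\phi_N$ is identically $1$ at every point touched, so $\hat F_N(\mu^N(t\wedge\lambda^N(K)))=\langle f,\mu^N_i(t\wedge\lambda^N(K))\rangle$ and $\mathbf{A}^N_l\hat F_N(\mu^N(s))=\mathbf{A}^N_l F_f(\mu^N(s))$ for a.e.\ $s<\lambda^N(K)$ (on $\{\lambda^N(K)=0\}$ one has $Z^N_K\equiv 0$). Substituting Step 1 and cancelling the $N\mathbf{R}^N$ part yields
\[
Z^N_K(t)=M^N(t)+\int_0^{t\wedge\lambda^N(K)}\big(\langle B_i f,\mu^N_i(s)\rangle+\mathbf{G}^N_l F_f(\mu^N(s))\big)\,ds .
\]
Letting $K\uparrow\R^q_+$ (so $\lambda^N(K)\to\infty$ a.s., cf.\ the argument for Lemma \ref{lemma:stoppingtimesinfinity}) shows $Z^N$ itself is a semimartingale.

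\emph{Step 3 (verifying Condition \ref{NICE}).} The jumps of $Z^N_K$ are those of $\langle f,\mu^N_i(\cdot)\rangle$, of modulus at most $2\|f\|_\infty/N$, so $\sup_{0\le t\le T}\|Z^N_K(t)-Z^N_K(t-)\|\le 2\|f\|_\infty/N\to 0$ deterministically, giving \eqref{cond:discontinuitiesdisappaer}. The drift $A^N(s):=\big(\langle B_i f,\mu^N_i(s)\rangle+\mathbf{G}^N_l F_f(\mu^N(s))\big)\ind_{\{s\le\lambda^N(K)\}}$ is bounded by a constant depending only on $K,f,l$ (densities are confined to $K_1$ up to $\lambda^N(K)$, so Step 1 applies), whence $\E\int_0^t\|A^N(s)\|\,ds\le Ct$ for all $N$. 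Finally, by the standard identity for the predictable quadratic variation of a martingale-problem martingale, $\langle M^N\rangle_t=\int_0^{t\wedge\lambda^N(K)}\big(\mathbf{A}^N_l(\hat F_N^2)-2\hat F_N\mathbf{A}^N_l\hat F_N\big)(\mu^N(s))\,ds$; on $\{s<\lambda^N(K)\}$ the integrand is the carré-du-champ of $\mathbf{A}^N_l$ at $F_f$, a sum over all birth, death, dispersal, migration and immigration mechanisms of (rate)$\times$(jump in $F_f)^2$. Since every jump of $F_f$ has size $O(1/N)$ while every birth/death/dispersal rate is $O(N^2)$ and every migration/immigration rate $O(N)$, the $N$-dependence cancels and this quantity is bounded on $K_1$ uniformly in $N$ (again using Assumption \ref{assmp:limitrelationshipondispersal} for the dispersal contribution). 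Hence $\E[M^N]_t=\E\langle M^N\rangle_t\le Ct$ uniformly, and Condition \ref{NICE} holds.

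The main obstacle is precisely the bookkeeping in Steps 1 and 3: arranging the cutoff $\phi_N$ to be flat on a $1/N$-neighbourhood so that $\mathbf{A}^N_l$ does not detect it before $\lambda^N(K)$, and then checking that although $N\mathbf{R}^N$ and the dispersal operator carry factors $N$ and $N^2$, the carré-du-champ estimate stays bounded uniformly in $N$ because the squared jumps are $O(1/N^2)$ and Assumption \ref{assmp:limitrelationshipondispersal} controls the dispersal term.
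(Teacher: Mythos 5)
Your proposal is correct and follows essentially the same route as the paper: truncate $\langle f,\mu_i\rangle$ by a density cutoff so that it (and its square) lands in $\mathcal{C}^q_0$, invoke the martingale problem and the splitting \eqref{splittingofanl} to identify the drift and cancel the $N\mathbf{R}^N$ term, and bound the expected quadratic variation via the carr\'e-du-champ, where the $O(1/N^2)$ squared jumps offset the $O(N^2)$ rates (with Assumption \ref{assmp:limitrelationshipondispersal} handling the dispersal term). The only cosmetic difference is that the paper uses a hard indicator cutoff $c_k$ at level $2k$ together with the stopping times $\tau^N_k$ and then observes $\lambda^N(K)\le\tau^N_k$ for $k$ large, whereas you fatten $K$ by $1/N$; both devices exploit the same fact that the generator only probes densities within $1/N$ of the current one.
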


\begin{proof}
Let $i \in Q$ and $f \in \mathcal{D}_0$ be fixed. For each $N \in \N$ and $l \in \{0,1,2,3\}$ define a function $a^N_l : \mathcal{M}^q_F(E) \to \R$ by
\begin{align*}
 a^N_0(\mu) &=  \langle B_i f, \mu_i \rangle, \\
 a^N_1(\mu) &=  \langle B_i f, \mu_i \rangle +  \sum_{j \in Q } \int_{E}  b^{s}_{ji}(x,h)f(x) \mu_j(dx)  -  \int_{E } d^{s}_{i}(x,h) f(x)  \mu_i(dx), \\
 a^N_2(\mu) & = \langle B_i f, \mu_i \rangle + N \sum_{j \in Q }  \int_{E } \beta_{ji}(h) p^N_{ji}(x) \left[ \int_{E} \left( f(y)-f(x) \right) \vartheta^N_{ji}(x,dy) \right]\mu_j(dx) \\
\textrm{ and } 
a^N_3(\mu) & = \langle B_i f, \mu_i \rangle + \kappa_i(h) \int_{E} f(x)\Theta_i(dx),  
\end{align*}
where $h = H(\mu)$.
Let $U^N_k \subset \mathcal{M}^q_{N,a}(E) $ be given by
\begin{align}
\label{defuk}
 U^N_k = \left\{ \mu \in \mathcal{M}^q_{N,a}(E) : \|H(\mu)\|_1 < 2 k \right\}.
\end{align}
Then for each $k \in \N$ and $l \in \{0,1,2,3\}$ we have
\begin{align}
\label{boundonanl}
\sup_{N \in \N} \sup_{\mu \in U^N_k } a^N_l(\mu) < \infty. 
\end{align}
To see this note that for any $N \in \N$, $U^N_k \subset U_k := \{ \mu \in \mathcal{M}^q_{F}(E) : \|H(\mu)\|_1 \leq 2 k \}$ and $U_k$ is a compact subset of $\mathcal{M}^q_{F}(E)$. For $l \in \{0,1,3\}$, $a^N_l$ is a continuous function which does not depend on $N$ and hence we get \eqref{boundonanl} simply by observing that
\begin{align}
\label{boundonanl_1}
 \sup_{N \in \N} \sup_{\mu \in U^N_k } a^N_l(\mu) \leq \sup_{\mu \in U_k } a^N_l(\mu) < \infty. 
\end{align}
Similarly if we define a continuous function $\hat{a}_2 : \mathcal{M}^q_F(E) \to \R$ by
\[ \hat{a}_2(\mu) = \langle B_i f, \mu_i \rangle + \sum_{j \in Q }  \beta_{ji}(h)  \langle C_{ji} f, \mu_j \rangle \] 
then we also have
\begin{align}
\label{boundonanl_2}
 \sup_{N \in \N} \sup_{\mu \in U^N_k } \hat{a}_2(\mu) \leq \sup_{\mu \in U_k } \hat{a}_2(\mu)  < \infty. 
\end{align}
Here $C_{ji}$'s are the operators satisfying Assumption \ref{assmp:limitrelationshipondispersal}. This assumption also implies that
\begin{align*}
& \sup_{N \in \N} \sup_{\mu \in U^N_k }  \left|  a^N_2(\mu) -  \hat{a}_2(\mu) \right| \\
& \leq  \sup_{N \in \N} \sup_{\mu \in U^N_k }      \sum_{j \in Q }  \beta_{ji}(h) \int_{E } \left(  N p^N_{ji}(x)  \int_{E} \left( f(y)-f(x) \right) \vartheta^N_{ji}(x,dy) - C_{ji}f(x) \right) \mu_j(dx)  \\
& \leq \sup_{N \in \N} \sup_{\mu \in U^N_k } \sum_{j \in Q }  \beta_{ji}(h)h_j \sup_{x \in E } \left(  N p^N_{ji}(x)  \int_{E} \left( f(y)-f(x) \right) \vartheta^N_{ji}(x,dy) - C_{ji}f(x) \right) \\
&< \infty.
\end{align*}
This bound along with \eqref{boundonanl_2} and the triangle inequality shows \eqref{boundonanl} for $l = 2$.  

Let $c_k : \R^q_{+} \to \R^q_{+}$ be given by \eqref{defckhvector}. Define $\hat{F}_k : \mathcal{M}^q_F(E) \to \R$ by
\[\hat{F}_k(\mu) = c_{k,i}(h) \langle f, \mu_i\rangle,\]
where $h = H(\mu)$. One can verify that for any $\mu \in U^N_k$ 
\begin{align}
& \left( \mathbf{B}^N + \mathbf{G}^N_l \right) \hat{F}_k(\mu)  = a^N_l(\mu) \label{formanf} \\
\textrm{ and } \ & \mathbf{R}^N \hat{F}_k(\mu) = \sum_{j \in Q}  \beta_{ji}(h) \langle f,\mu_j \rangle - \rho_i(h)  \langle f,\mu_i \rangle \label{formrnf}.
\end{align} 
Suppose $\{ \mu^N(t) : t \geq 0\} $ solves the martingale problem for $\mathbf{A}^N_l$ and $\{ h^N(t) = H( \mu^N(t) ) : t \geq 0 \}$ is the corresponding density process. Define another process $\{ m^N(t) : t \geq 0\}$ by
\begin{align}
\label{lemma:defmn}
 m^N(t) = Z^N(t) - \int_{0}^{t} a^N_l(\mu^N(s))ds.
\end{align}
For any $k \in \N$ let
\begin{align}
\label{lemma:deftnk}
\tau^N_k = \inf \{ t \geq 0 : \|h^N(t-)\|_1 \geq k \textrm{ or }\|h^N(t)\|_1 \geq k  \}. 
\end{align}
From Lemma \ref{lemma:stoppingtimesinfinity} we can conclude that for any fixed $N$, the stopping times $\tau^N_k$ 
converge to $\infty$ a.s.\ as $k \to \infty$. 
Observe that $\hat{F}_k$ belongs to the class $\mathcal{C}^q_0 = \mathcal{D}(\mathbf{A}^N_l)$. Hence
\begin{align*}
\hat{F}_k(\mu^N(t))- \hat{F}_k(\mu^N(0)) -\int_{0}^{t} \mathbf{A}^N_l \hat{F}_k(\mu^N(s))ds
\end{align*}
is a martingale. From \eqref{splittingofanl} and the optional sampling theorem we get that 
\begin{align}
\label{lemma:defmnkt}
m^N_k(t) & = \hat{F}_k(\mu^N(t \wedge \tau^N_k)) -  \hat{F}_k(\mu^N(0)) - \int_{0}^{t\wedge \tau^N_k} \mathbf{A}^N_l \hat{F}_k(\mu^N(s))ds \\
         & = \hat{F}_k(\mu^N(t \wedge \tau^N_k)) -  \hat{F}_k(\mu^N(0)) - N \int_{0}^{t\wedge \tau^N_k} \mathbf{R}^N \hat{F}_k(\mu^N(s))ds \notag \\
& - \int_{0}^{t \wedge \tau^N_k} \left( \mathbf{B}^N +  \mathbf{G}^N_l\right)\hat{F}_k(\mu^N(s))ds \notag
\end{align}
is also a martingale. If the set $(0, t \wedge \tau^N_k]$ is non-empty then for any $s \in (0, t \wedge \tau^N_k]$, we have $c_k(h^N(s)) = \bar{1}_q$ and therefore $\hat{F}_k(\mu^N(s)) = \langle f , \mu^N_i(s)\rangle$.
If the set $(0, t \wedge \tau^N_k]$ is empty then $t \wedge \tau^N_k = 0$ and in this case $m^N_k(t) = 0$. From \eqref{formanf} and \eqref{formrnf} we see that for all $t \geq 0$, $m^N_k(t) = m^N(t \wedge \tau^N_k)$, where $m^N$ is defined by \eqref{lemma:defmn}. But $m^N_k$ is a martingale and for a fixed $N$, $\tau^N_k \to \infty$ a.s.\ as $k \to \infty$. Therefore we can conclude that $m^N$ is local martingale and hence $Z^N$ is a semimartingale.

Let $\hat{F}^2_k : \mathcal{M}^q_F(E) \to \R$ be given by $\hat{F}^2_k(\mu) = (\hat{F}_k(\mu))^2$. Note that for any $\mu \in \mathcal{M}_F(E)$
\begin{align*}
\left\langle f , \mu \pm \frac{1}{N} \delta_x \right\rangle^2 -  \langle f , \mu \rangle^2 &= \pm 2 \langle f , \mu \rangle \frac{f(x)}{N} + \frac{f^2(x)}{N^2}.
\end{align*}
Using this one can verify that if $\mu \in U^N_k$ and $h = H(\mu)$ then we have
\begin{align*}
&N \left( \mathbf{R}^N \hat{F}_k^2(\mu) - 2 \hat{F}_k(\mu) \mathbf{R}^N \hat{F}_k(\mu) \right) 
 =  \sum_{j \in Q} \beta_{ji}(h) \langle f^2 ,\mu_j \rangle + \rho_i(h) \langle f^2,\mu_i \rangle, \\  
& \mathbf{G}^{N}_1 \hat{F}^2_k(\mu) - 2 \hat{F}_k(\mu) \mathbf{G}^{N}_1 \hat{F}_k(\mu) \\ &= \frac{1}{N} \sum_{j \in Q} \int_{E}  b^{s}_{ji}(x,h)f^2(x) \mu_j(dx)  +  \int_{E } d^{s}_{i}(x,h) f^2(x)  \mu_i(dx), \\
& \mathbf{G}^N_2 \hat{F}^2_k(\mu)  -  2 \hat{F}_k(\mu) \mathbf{G}^{N}_2 \hat{F}_k(\mu) \\ & =   \sum_{j \in Q }  \int_{E } \beta_{ji}(h) p^N_{ji}(x) 
\left( \int_{E} (f^2(y)-f^2(x) ) \vartheta^N_{ji}(x,dy) \right)\mu_j(dx)\\
\textrm{ and } & \mathbf{G}^N_3 \hat{F}^2_k(\mu) -  2 \hat{F}_k(\mu) \mathbf{G}^{N}_3 \hat{F}_k(\mu) =   \frac{1}{N} \kappa_i(h) \int_{E} f^2(x)\Theta_i(dx).
\end{align*}
Also for any $\mu \in U^N_k$
\begin{align*}
 \mathbf{B}^N \hat{F}^2_k(\mu) -  2 \hat{F}_k(\mu)\mathbf{B}^N \hat{F}_k(\mu) = \frac{1}{N} \langle B_i f^2 - 2 f B_i f, \mu_i \rangle. 
\end{align*}
Using \eqref{splittingofanl} and the above expressions we can show in a manner similar to \eqref{boundonanl} that for any $k \in \N$ we have
\begin{align}
\label{lemma:boundforqv}
\sup_{N \in \N} \sup_{ \mu \in U^N_k }  |\mathbf{A}^N_l \hat{F}^2_k(\mu) - 2 \hat{F}_k(\mu)\mathbf{A}^N_l \hat{F}_k(\mu) | < \infty.
\end{align}
The function $\hat{F}^2_k$ is also in $\mathcal{C}^q_0 = \mathcal{D}(\mathbf{A}^N_l)$. Therefore if $m^N_k$ is the martingale given by \eqref{lemma:defmnkt} then 
\begin{align*}
\left( m^N_k(t) \right)^2 - \int_{0}^{t \wedge \tau_k} \left( \mathbf{A}^N_l \hat{F}^2_k(\mu^N(s)) - 2 \hat{F}_k(\mu^N(s))\mathbf{A}^N_l \hat{F}_k(\mu^N(s))\right)ds 
\end{align*}
is also a martingale. Therefore the expected quadratic variation of $m^N_k$ can be computed as
\begin{align}
\label{lemma:finalboundonqv2}
\E \left( [m^N_k]_t \right) =\E \left( (m^N_k(t))^2 \right) = 
\E \left(  \int_{0}^{t \wedge \tau^N_k}  \left( \mathbf{A}^N_l \hat{F}^2_k(\mu^N(s)) - 2 \hat{F}_k(\mu^N(s))\mathbf{A}^N_l \hat{F}_k(\mu^N(s))\right)ds \right). 
\end{align}
For any fixed $k \in \N$, the sequence of semimartingales $\{Z^N(\cdot \wedge \tau^N_k) : N \in \N \}$ satisfy \eqref{cond:discontinuitiesdisappaer} because the 
discontinuities of 
$\mu^N$ are of size proportional to $1/N$. From \eqref{lemma:defmn} we can see that the semimartingale $Z^N(\cdot \wedge \tau^N_k)$ can be decomposed as
\begin{align*}
Z^N(t \wedge \tau^N_k) = m^N(t \wedge \tau^N_k) + \int_{0}^{t \wedge \tau^N_k } a^N_l(\mu^N(s))ds =   m^N_k(t) + \int_{0}^{t} \ind_{\{s \leq \tau^N_k\}} a^N_l(\mu^N(s))ds.
\end{align*}
For any $0 < s \leq \tau^N_k$, $\mu^N(s) \in  U^N_k$. Using \eqref{lemma:finalboundonqv2}, \eqref{lemma:boundforqv} and \eqref{boundonanl} we can see that
\begin{align}
\label{lemma:finalboundonqv}
\sup_{N \in \N}  \E \left( [m^N_k]_t  + \int_{0}^{t} \ind_{\{s \leq \tau^N_k\}} a^N_l(\mu^N(s))ds\right) < \infty.
\end{align}  
Therefore for any $k \in \N$, the sequence of semimartingales $\{ Z^N(\cdot \wedge \tau^N_k)\}$ satisfies Condition \ref{NICE}. 
For any compact set $K \subset \R^q_{+}$, there exists a $k$ such that $K \subset \{ h \in \R^q_{+}: \|h\|_1 < k \}$. If the stopping time $\lambda^N_K$ is defined by 
\eqref{defoflambdank} then $\lambda^N_K \leq \tau^N_k$ a.s.\ where $\tau^N_k$ is given by \eqref{lemma:deftnk}. Hence it is immediate that if $Z^N_K$ is the semimartingale 
defined by $Z^N_K(\cdot) = Z^N( \cdot\wedge \lambda^N(K))$ then the sequence of semimartingales $\{ Z^N_K : N  \in \N \}$ will also satisfy Condition \ref{NICE}.  
\end{proof}

\begin{proof}[Proof of Proposition \ref{prop:constancyofh_mainresult}]
Let $\{\mathcal{F}^N_t\}$ be the filtration generated by the process $\{ \mu^N(t) : t \geq 0\}$. For any compact set $K \subset U_{\textnormal{eq}}$ let $\lambda^N(K)$ be given by \eqref{defoflambdank}. 
In this proof a sequence of $\{\mathcal{F}^N_t\}$-semimartingales $\{Z^N : N \in \N\}$ with paths in $D_{\R^d}[0,\infty)$ will be called \emph{well-behaved} 
if for each compact $K \subset U_{\textnormal{eq}}$, the sequence of semimartingales $\{Z^N (\cdot \wedge \lambda^N(K))\}$ satisfies Condition \ref{NICE}.

Using Lemma \ref{lemma:semproperty} with $f = 1_E$, (where $1_E$ is as in \eqref{defn:densitymap}) for each $i \in Q$ 
we obtain a well-behaved $\R$-valued semimartingale $Z^{N,1}_i$ such that
\begin{align*}
\langle 1_E, \mu^N_i(t) \rangle & = \langle 1_E, \mu^N_i(0) \rangle + 
N  \int_{0}^{t} \left[  \sum_{j \in Q}  \beta_{ji}(h^N(s)) \langle 1_E, \mu^N_j(s) \rangle  - \rho_i(h^N(s)) \langle 1_E, \mu^N_i(s) \rangle   \right]ds 
\\& + Z^{N,1}_i(t).
\end{align*}
Recall the definition of the matrix $A(h)$ from \eqref{defninteractionmatrix}.
The above expression is the same as 
\begin{align*}
h^N_i(t) & = h^N_i(0) \rangle + N  \int_{0}^{t} \left[  \sum_{j \in Q}  \beta_{ji}(h^N(s)) h^N_j(s) - \rho_i(h^N(s)) h^N_i(s)   \right]ds + Z^{N,1}_i(t)\\
         & =   h^N_i(0)  + N \int_{0}^{t} \left[ \sum_{j \in Q} A_{ij}(h^N(s))h^N_j(s) \right] ds  + Z^{N,1}_i(t).
\end{align*}
If we let $Z^{N,1}$ to be the $\R^q$-valued semimartingale given by
\[Z^{N,1}(t) = (Z^{N,1}_1(t),\dots,Z^{N,1}_q(t)) \textrm{ for }  t \geq 0,\]
then $Z^{N,1}$ is also a well-behaved semimartingale. The semimartingale $h^N$ satisfies
\begin{align}
h^N(t)  & = h^N(0)  + N \int_{0}^{t} A(h^N(s)) h^N(s)ds  + Z^{N,1}(t) \notag \\
        & = h^N(0)  + N \int_{0}^{t} \theta(h^N(s))ds  + Z^{N,1}(t) ,\label{prop:vectorformofhn}
\end{align}
where the last equality holds due to definition \eqref{defnthetah}. Now fix a $f \in \mathcal{D}_0$. From Lemma \ref{lemma:semproperty}, 
for each $i \in Q$ there is a well-behaved semimartingale $Z^{N,f}_i$ such that
\begin{align*}
\langle f , \mu^N_i(t) \rangle & = \langle f , \mu^N_i(0) \rangle +  N \left( \int_{0}^{t} \sum_{j \in Q } \beta_{ji}(h^N(s)) \langle  f , \mu^N_j (s)\rangle ds -  \int_{0}^{t} \rho_i(h^N(s)) \langle  f , \mu^N_i (s)\rangle ds \right) \\
& + Z^{N,f}_i(t) \\
& = \langle f , \mu^N_i(0) \rangle +  N \int_{0}^{t} \left[ \sum_{j \in Q} A_{ij}(h^N(s)) \langle  f , \mu^N_j (s)\rangle \right]  ds + Z^{N,f}_i(t).   
\end{align*}
Using the integration by parts formula for semimartingales, for each $i,j \in Q$ we can write
\begin{align}
\label{prop:expansionofhixj}
h^N_i(t) \langle f , \mu^N_j(t) \rangle &= h^N_i(0) \langle f , \mu^N_j(0) \rangle + 
N \left( \int_{0}^{t} h^N_i(s) \left[ \sum_{k \in Q } A_{jk}(h^N(s)) \langle f , \mu^N_k(s) \rangle \right] ds  \right. \notag \\  &
\left. + \int_{0}^{t} \langle f , \mu^N_j(s) \rangle \left[ \sum_{k \in Q } A_{ik}(h^N(s)) h^N_k(s) \right]  ds \right) + Z^N_{ij}(t),
\end{align}
where $Z^N_{ij}$ is another well-behaved semimartingale given by
\begin{align*}
Z^N_{ij}(t) = \int_{0}^{t} h^N_i(s)Z^{N,f}_j(s)ds +  \int_{0}^{t}  \langle f , \mu^N_j(s) \rangle  Z^{N,1}_i(s)ds + [Z^{N,1}_i,Z^{N,f}_j]_t. 
\end{align*}
The last term in the above equation is the cross-variation term between $Z^{N,1}_i$ and $Z^{N,f}_j$. Now for each $i \in Q$ define the semimartingale $Y^N_i$ by
\begin{align}
\label{prop:defyni}
Y^N_i(t) =  \langle f , \mu^N_i(t) \rangle \left( \sum_{j \in Q}   h^N_j(t)\right)  - h^N_i(t)\left( \sum_{j \in Q}  \langle f , \mu^N_j(t) \rangle\right). 
\end{align}
The semimartingale $Y^N_i$ is just a linear combination of the semimartingales of the form $h^N_j(t) \langle f , \mu^N_i(t) \rangle$. 
Using \eqref{prop:expansionofhixj} we can write
\begin{align}
\label{prop:expansionofyni}
Y^N_i(t) =  Y^N_i(0) + N \int_{0}^{t} L^N_i(s) ds + Z^{N,2}_i(t), 
\end{align}
where $Z^{N,2}_i$ is a well-behaved semimartingale and for any $t \geq 0$
\begin{align*}
L^N_i(t) & = \sum_{j,k \in Q} \left[ A_{i k}(h^N(t)) h^N_j(t) \langle f , \mu^N_k(t) \rangle  + A_{j k}(h^N(t)) h^N_k(t) \langle f , \mu^N_i(t) \rangle  
\right. \\ & \left. -A_{j k}(h^N(t)) h^N_i(t) \langle f , \mu^N_k(t) \rangle - A_{i k}(h^N(t)) h^N_k(t) \langle f , \mu^N_j(t) \rangle \right] \\
& = \sum_{k \in Q} A_{ik}(h^N(t)) Y^N_k(t) +  \langle f , \mu^N_i(t) \rangle \left[ \sum_{j,k \in Q}  A_{j k}(h^N(t)) h^N_k(t) \right] \\& - h^N_i(t) \left[ \sum_{j,k \in Q}  A_{j k}(h^N(t))   \langle f , \mu^N_k(t) \rangle \right].
\end{align*}
Define a matrix $G(h)$ for each $h \in \R^q_{+}$ as follows 
\begin{align*}
G(h) = \left\{ 
\begin{array}{cc}
 A(\bar{0}_q) & \textrm{ if } h = \bar{0}_q \\
   A(h) +  \left( \frac{\langle \bar{1}_q , \theta(h) \rangle }{\langle \bar{1}_q , h \rangle } I_q - \frac{h}{\langle \bar{1}_q , h \rangle } \bar{1}^T_q A(h) \right) &  \textrm{otherwise}. 
\end{array} \right.
 \end{align*}
Note that if $h^N(t) \neq \bar{0}_q$ then
\begin{align*}
& \langle f , \mu^N_i(t) \rangle = \frac{ Y^N_i(t) +  h^N_i(t)\left( \sum_{j \in Q} \langle f , \mu^N_j(t) \rangle\right)}{\langle \bar{1}_q, h^N(t)\rangle} \textrm{ and }\\
& \sum_{j,k \in Q} A_{jk}(h^N(t)) \langle f , \mu^N_k(t) \rangle  \\&= \frac{ \sum_{j,k \in Q} A_{jk}(h^N(t)) Y^N_k(t)+
\left( \sum_{j \in Q}   \langle f , \mu^N_j(t) \rangle \right) \left( \sum_{j,k \in Q} A_{jk}(h^N(t)) h^N_k(t)  \right) }{  \langle \bar{1}_q ,  h^N(t) \rangle}.
\end{align*}
This allows us to write
\begin{align*}
L^N_i(t) & = \sum_{j \in Q} G_{ij}(h^N(t)) Y^N_j(t) 
\end{align*}
and hence from \eqref{prop:expansionofyni}
\begin{align}
\label{prop:expansionofyni2}
Y^N_i(t) =  Y^N_i(0) + N \int_{0}^{t} \left[ \sum_{j \in Q}  G_{ij}(h^N(s)) Y^N_j(s) \right] ds + Z^{N,2}_i(t). 
\end{align}
Let $Y^N$ and $Z^{N,2}$ be the $\R^{q-1}$-valued semimartingales given by
\begin{align*}
Y^N(t) = (Y^N_1(t),\dots, Y^N_{q-1}(t)) \textrm{ and } Z^{N,2}(t) = (Z^{N,2}_1(t),\dots, Z^{N,2}_{q-1}(t)) \textrm{ for } t \geq 0. 
\end{align*}
For each $h \in \R^q_{+}$ let $\bar{G}(h) \in \mathbb{M}(q-1,q-1)$ be the matrix defined by
\[\bar{G}_{ij}(h) = G_{ij}(h) - G_{iq}(h)\textrm{ for all }i,j \in \{1,\dots,q-1\}.\]
Observe that $\sum_{i \in Q} Y^N_i(t) = 0$ and hence $Y^N_q(t) = -\sum_{i=1}^{q-1} Y^N_i(t)$. From \eqref{prop:expansionofyni2} we get
\begin{align}
\label{prop:expansionofyni3}
Y^N(t) = Y^N(0) + N \int_{0}^{t} \bar{G}(h^N(s))Y^N(s)ds + Z^{N,2}(t).  
\end{align}
From part (E) of Lemma \ref{lemma:appendix1} the matrix $\bar{G}(h_{\textnormal{eq}})$ is stable, that is, all its eigenvalues have strictly negative real parts. 

We now define a $\R^q_{+} \times \R^{q-1}$-valued semimartingale $X^N$ by 
\begin{align}
\label{prop:defxn0}
X^N(t) = (h^N(t), Y^N(t)) = (h^N_1(t),\dots,h^N_q(t),Y^N_1(t),\dots,Y^N_{q-1}(t)) \textrm{ for } t \geq 0.
\end{align}
From \eqref{prop:vectorformofhn} and \eqref{prop:expansionofyni3} we can see that $X^N$ satisfies
\begin{align}
\label{prop:defxn}
X^N(t) = X^N(0) + N \int_{0}^{t} F(X^N(s))ds + Z^N(t) 
\end{align}
where $Z^N(t) = (Z^{N,1}(t), Z^{N,2}(t))$ is a well-behaved semimartingale and 
$F : \R^q_{+} \times \R^{q-1} \to \R^{2q-1}$ is the function given by
\begin{align}
\label{prop:defndrift}
F(x) = (\theta(h), \bar{G}(h)y) \textrm{ for } x = (h,y) \in \R^q_{+} \times \R^{q-1}.  
\end{align}
Let $x_{\textnormal{eq}} = (h_{\textnormal{eq}}, \bar{0}_{q-1})$. Then $F(x_{\textnormal{eq}}) = \bar{0}_{2q-1}$ and the Jacobian matrix 
$[J F(x_{\textnormal{eq}})]\in \mathbb{M}(2q-1,2q-1)$ has the block lower-triangular form  
\begin{align*}
[J F(x_{\textnormal{eq}})] = \left[
\begin{array}{cc}
[J \theta (h_{\textnormal{eq}})]& O_{q,q-1} \\
C &  \bar{G}(h_{\textnormal{eq}}) \\
\end{array}
\right], 
\end{align*}
where $C$ is some $(q-1) \times q$ matrix in $\mathbb{M}(q-1,q)$ and $O_{q,q-1}$ is the $q \times (q-1)$ matrix of zeroes. 
We mentioned above that the matrix $\bar{G}(h_{\textnormal{eq}})$ is stable and part (B) of Assumption \ref{mainassumptionsonr} says that the matrix $[J \theta (h_{\textnormal{eq}})]$ is also stable. 
Due to the block triangular form, the matrix $[J F(x_{\textnormal{eq}})]$ is stable as well.

Pick any $h_0 \in U_{\textnormal{eq}}$ and $y_0 \in \R^{q-1}$. Let $h(t) = \psi_\theta(h_0,t)$ for all $t \geq 0$, where $\psi_\theta$ is the \emph{flow} defined in Section 
\ref{density_regulation_mechanism}. The set $U_{\textnormal{eq}} \subset \R^q_{+}$ in $\psi_\theta$-invariant and hence $h(t) \in U_{\textnormal{eq}}$ for all $t \geq 0$. 
Let $y(t)$ be the unique solution of the initial value problem 
\begin{align}
\label{ivp2}
\frac{d y}{d t} = \bar{G}(h(t))y , \quad y(0)=y_0. 
\end{align}
Since the above differential equation is linear in the $y$ variable, the solution $y(t)$ is defined for all $t \geq 0$. 
Moreover $h(t) \to h_{\textnormal{eq}}$ and $\bar{G}(h(t)) \to \bar{G}(h_{\textnormal{eq}})$ as $t \to \infty$. 
The matrix $\bar{G}(h_{\textnormal{eq}})$ is stable and therefore $y(t) \to \bar{0}_{q-1}$ as $t \to \infty$.
Let $U(x_{\textnormal{eq}}) = U_{\textnormal{eq}} \times \R^{q-1}$. For each $x_0 = (h_0,y_0) \in U(x_{\textnormal{eq}}) $ and $t \geq 0$, let $\psi_F(x_0, t) = (h(t),y(t))$ with $h(t) = \psi_\theta(h_0,t)$ and 
$y(t)$ being the solution of \eqref{ivp2}.

The mapping $\psi_F : U(x_{\textnormal{eq}}) \times \R_+ \to U(x_{\textnormal{eq}})$ is the flow of the vector field $F$ on $U(x_{\textnormal{eq}})$. For all $x \in U(x_{\textnormal{eq}})$ and $t \geq 0$, $\psi_F$ satisfies 
\begin{align}
\label{flowofpsif}
\psi_F(x,t) = x + \int_{0}^{t} F( \psi_F(x,s) )ds. 
\end{align}
From the discussion in the preceding paragraph we can conclude that
\begin{align}
\label{prop:limitingbehaviorofpsif}
\lim_{t \to \infty} \psi_F(x,t)=x_{\textnormal{eq}} \textrm{ for all } x \in  U(x_{\textnormal{eq}}).
\end{align}
We have assumed in this proposition that there is a compact set $K_0 \subset U_{\textnormal{eq}}$ such that $h^N(0) \in K_0$ a.s.\ for each $N \in \N$. 
Note that any function $f \in \mathcal{D}_0$ is bounded. The definition of the semimartingale $Y^N$ guarantees that there is a 
compact set $K_1 \subset \R^{q-1}$ such that $X^N(0) = (h^N(0),Y^N(0)) \in K_0 \times K_1 \subset U(x_{\textnormal{eq}})$ a.s.\ for each $N \in \N$.

Now consider the equation \eqref{prop:defxn}. For large values of $N$, the semimartingale $X^N$ is driven by a large drift term of the form $N F(X^N(\cdot))$. 
The vector $x_{\textnormal{eq}}$ is a stable fixed point for this drift term and $U(x_{\textnormal{eq}})$ is its region of attraction. 
If we start in this region of attraction, then this drift is very forceful. It completely overwhelms the effect of the well-behaved semimartingale $Z^N$ and 
drives $X^N$ to the stable fixed point $x_{\textnormal{eq}}$. 
Moreover as $N$ gets large, the trajectories of $X^N$ start looking more and more like the trajectories of the deterministic flow $\psi_F$ with time compressed by a 
factor of $N$. These ideas are made precise in a much more general setting by Katzenberger \cite{kat}. We use Theorem 6.3 in \cite{kat} to deduce that
for any $T>0$
\begin{align}
\label{prop:mainrelationforxn}
\sup_{t \in [0,T]}\left\| X^N(t) - \psi_F(X^N(0),Nt) \right\| \Rightarrow 0 \textrm{ as } N \to \infty,
\end{align}
where $\| \cdot\|$ is the standard Euclidean norm. From the definition of $X^N$ and $\psi_F$ it is also clear that for any $T>0$
\begin{align}
\label{prop:mainrelationforhn}
\sup_{t \in [0,T]}\left\| h^N(t) - \psi_\theta(h^N(0),Nt) \right\| \Rightarrow 0 \textrm{ as } N \to \infty. 
\end{align}

From now on, for any $x \in \R^n$ and $\epsilon>0$, let $B^n_\epsilon(x)$ denote the open ball in $\R^n$ centered at $x$ with radius $\epsilon$. 
We have already argued that the Jacobian matrix of $F$ at $x_{\textnormal{eq}}$ is stable. 
From a simple linearization argument (see for example the proof of Theorem 3.7 in Khalil \cite{Khalil}) we can see that there exists a 
$\delta_0>0$ such that the open ball 
$B^{2q-1}_{\delta_0}(x_{\textnormal{eq}})  \subset U(x_{\textnormal{eq}})$ and for every $\delta \in (0,\delta_0)$ there exists a $\psi_F$-invariant open set $W_{\delta}$ whose 
closure $\bar{W}_{\delta}$ is contained in $B^{2q-1}_{\delta}(x_{\textnormal{eq}})$. 
From \eqref{prop:limitingbehaviorofpsif} we obtain
\begin{align*}
K_0 \times K_1  \subset \bigcup_{t \geq 0} \left\{ x \in U(x_{\textnormal{eq}}) : \psi_F(x,t) \subset W_{\delta}\right\}.
\end{align*}
As $ W_{\delta}$ is $\psi_F$-invariant, the open sets on the right are getting bigger and bigger as $t$ increases. 
Compactness of $K_0 \times K_1$ implies that there exists a $t_{\delta}>0$ such that $\psi_F(x,t) \in W_{\delta}$ for all $x \in K_0 \times K_1$ and $t \geq t_\delta$. 
This immediately gives us
\begin{align*}
 \sup_{t \geq t_{\delta}} \sup_{x \in K_0 \times K_1} \left\| \psi_F(x,t) - x_{\textnormal{eq}}  \right\| \leq \delta
\end{align*}
and letting $\delta \to 0$ we obtain
\begin{align}
\label{prop:compactdriven}
 \limsup_{t \to \infty} \sup_{x \in K_0 \times K_1} \left\| \psi_F(x,t) - x_{\textnormal{eq}}  \right\| = 0.
\end{align}
Now let $t_N$ be any sequence satisfying the conditions of this proposition. Then for any $T > 0$ 
\begin{align*}
&\sup_{t \in [0,T]} \left\|X^N(t+t_N) - x_{\textnormal{eq}} \right\|
\\& \leq  \sup_{t \in [0,T]} \left\|X^N(t+t_N) - \psi_F(X^N(0), N(t+t_N)) \right\| +  \sup_{t \in [0,T]}  \left\|\psi_F(X^N(0), Nt+N t_N) -x_{\textnormal{eq}} \right\| \\
& \leq  \sup_{t \in [0,T]} \left\|X^N(t+t_N) - \psi_F(X^N(0), N(t+t_N)) \right\| +  \sup_{t \geq N t_N}  \sup_{x \in K_0 \times K_1} \left\|\psi_F(x,t) -x_{\textnormal{eq}} \right\|,
\end{align*}
where the second inequality is true because $X^N(0) \in K_0 \times K_1$ a.s.\ for each $N \in \N$. 
From \eqref{prop:mainrelationforxn} and \eqref{prop:compactdriven} we can see that as $N \to \infty$
\begin{align*}
\sup_{t \in [0,T]} \left\|X^N(t+t_N) - x_{\textnormal{eq}} \right\| \Rightarrow 0
\end{align*}
which of course implies that
\begin{align}
\label{thetwogoodlimits}
\sup_{t \in [0,T]} \left\|h^N(t+t_N) - h_{\textnormal{eq}} \right\| \Rightarrow 0 \textrm{ and }  \sup_{t \in [0,T]} \left\|Y^N(t+t_N)\right\| \Rightarrow 0.
\end{align}
This proves part (A) of the proposition since the norms $\| \cdot\|$ and $\| \cdot\|_1$ are equivalent in $\R^q$. From the definition of $Y^N_i$ we can check that for each $i,j \in Q$ and $t \geq 0$
\begin{align*}
h^N_j(t) Y^N_i(t) - h^N_i(t) Y^N_j(t) = \left( \sum_{l \in Q} h^N_l(t) \right) \left( h^N_j(t) \langle f , \mu^N_i(t)\rangle - h^N_i(t) \langle f , \mu^N_j(t)\rangle \right) .
\end{align*}
The limits \eqref{thetwogoodlimits} immediately give us part (B) of the proposition for any $f \in \mathcal{D}_0$. 
As $\mathcal{D}_0$ is dense in $C(E)$, part (B) holds for any $f \in C(E)$.
\end{proof}

The following lemma will be useful in proving Theorem \ref{mainresult}.
\begin{lemma}
\label{compact_containment}
Let the notation and assumptions be the same as in Proposition \ref{prop:constancyofh_mainresult}. Then there is a compact set $K \subset U_{\textnormal{eq}}$ such that for all $T > 0$
\begin{align*}
\lim_{N \to \infty} \P \left( h^N(t) \notin K \textrm{ for any } t \in [0,T] \right) = 0. 
\end{align*}
\end{lemma}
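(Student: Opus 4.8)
The plan is to deduce the lemma from relation \eqref{prop:mainrelationforhn}, which was established inside the proof of Proposition \ref{prop:constancyofh_mainresult} and says that on $[0,T]$ the density process $h^N$ stays uniformly close to the deterministic orbit $t\mapsto\psi_\theta(h^N(0),Nt)$. Since $h^N(0)\in K_0\subset U_{\textnormal{eq}}$ a.s., every such orbit lies in the forward orbit $\mathcal O:=\bigcup_{t\ge0}\psi_\theta(K_0,t)$, so it suffices to exhibit a compact set $K\subset U_{\textnormal{eq}}$ containing an $\epsilon$-neighbourhood of $\mathcal O$ for some $\epsilon>0$: whenever the (vanishing) discrepancy between $h^N$ and the orbit falls below $\epsilon$, the process $h^N$ is then trapped in $K$ throughout $[0,T]$.

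So the real content is a deterministic fact: the forward orbit of the compact set $K_0$ has compact closure inside $U_{\textnormal{eq}}$. First I would build a forward-invariant neighbourhood of $h_{\textnormal{eq}}$. Because $h_{\textnormal{eq}}$ has strictly positive coordinates (part (A) of Lemma \ref{lemma:appendix1}) and $U_{\textnormal{eq}}$ is open in $\R^q_+$, some ball $B^q_{\delta_0}(h_{\textnormal{eq}})$ lies in $U_{\textnormal{eq}}$; since $[J\theta(h_{\textnormal{eq}})]$ is stable (part (B) of Assumption \ref{mainassumptionsonr}), the same linearization argument used in the proof of Proposition \ref{prop:constancyofh_mainresult} (cf. the proof of Theorem 3.7 in Khalil \cite{Khalil}) produces a $\psi_\theta$-invariant open set $W\ni h_{\textnormal{eq}}$ with $\bar W\subset B^q_{\delta_0}(h_{\textnormal{eq}})$, hence $\bar W$ compact and $\bar W\subset U_{\textnormal{eq}}$. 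For each $h\in K_0$, the trajectory $\psi_\theta(h,t)\to h_{\textnormal{eq}}$ enters the open set $W$ at some time $t_h$, and by continuity of the flow together with $\psi_\theta$-invariance of $W$ a whole neighbourhood of $h$ is mapped into $W$ for all times $\ge t_h$; a finite subcover of $K_0$ then yields a uniform $t_\ast$ with $\psi_\theta(K_0,t)\subset W$ for all $t\ge t_\ast$. Hence $\mathcal O\subset\big(\bigcup_{0\le t\le t_\ast}\psi_\theta(K_0,t)\big)\cup\bar W$, which is compact (the first piece is the continuous image of the compact set $K_0\times[0,t_\ast]$, hence closed), so $\bar{\mathcal O}$ is compact; and $\bar{\mathcal O}\subset U_{\textnormal{eq}}$ because each of the two pieces is contained in $U_{\textnormal{eq}}$, using that $U_{\textnormal{eq}}$ is $\psi_\theta$-invariant for the first one.

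With $\bar{\mathcal O}$ compact and $U_{\textnormal{eq}}$ open in $\R^q_+$, I would then pick $\epsilon>0$ so that $K:=\{h\in\R^q_+:\textrm{dist}(h,\bar{\mathcal O})\le\epsilon\}\subset U_{\textnormal{eq}}$; this $K$ is closed and bounded, hence compact. To finish, fix $T>0$. For every $N$ and every $t\in[0,T]$ one has $\psi_\theta(h^N(0),Nt)\in\mathcal O$ (as $h^N(0)\in K_0$), and $h^N(t)\in\R^q_+$ since $\mu^N(t)\in\mathcal{M}^q_{N,a}(E)$; therefore on the event $\{\sup_{t\in[0,T]}\|h^N(t)-\psi_\theta(h^N(0),Nt)\|<\epsilon\}$ we get $h^N(t)\in K$ for all $t\in[0,T]$. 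Consequently
\[
\P\big(h^N(t)\notin K\ \textrm{for some}\ t\in[0,T]\big)\le\P\Big(\sup_{t\in[0,T]}\big\|h^N(t)-\psi_\theta(h^N(0),Nt)\big\|\ge\epsilon\Big),
\]
and the right-hand side tends to $0$ as $N\to\infty$ by \eqref{prop:mainrelationforhn}, which is the claim. The step I expect to be the main obstacle is the deterministic one — showing the forward orbit of $K_0$ has compact closure inside the basin $U_{\textnormal{eq}}$. It is standard, but it genuinely needs the forward-invariant-neighbourhood construction (so it uses stability of the Jacobian, part (B), not merely pointwise attraction to $h_{\textnormal{eq}}$), which is also what lets $t_\ast$ be chosen uniformly over $K_0$.
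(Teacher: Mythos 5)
Your proof is correct and follows essentially the same route as the paper: both arguments reduce the lemma to the deterministic fact that the forward orbit of $K_0$ has compact closure inside $U_{\textnormal{eq}}$ (obtained via the stable-Jacobian linearization producing a $\psi_\theta$-invariant neighbourhood of $h_{\textnormal{eq}}$ and a uniform entry time over the compact set $K_0$), then fatten that compact set by a small $\epsilon$ staying inside $U_{\textnormal{eq}}$ and invoke \eqref{prop:mainrelationforhn}. The only cosmetic difference is the choice of compact set (the paper uses the preimage $\hat{K}_0$ of $\bar{U}_\epsilon$ under the time-$t_\epsilon$ flow, you use the closure of the orbit tube directly), which changes nothing of substance.
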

\begin{proof}
 Since the Jacobian matrix $[J \theta (h_{\textnormal{eq}})]$ is stable (part (B) of Assumption \ref{mainassumptionsonr}), by a 
linearization argument similar to the one referred in the proof above, we can find an $\epsilon > 0$ such that the open ball $B^q_{\epsilon}(h_{\textnormal{eq}}) \subset U_{\textnormal{eq}}$ and there exists a 
$\psi_\theta$-invariant open set $U_\epsilon$ such that its closure $\bar{U}_{\epsilon} \subset B^q_{\epsilon}(h_{\textnormal{eq}})$. One can argue as before that since $K_0$ is a compact 
set, there exists a $t_{\epsilon}$ such that for all $t \geq t_{\epsilon}$ and $x \in K_0$, $\psi_\theta(x,t) \in U_{\epsilon}$. 
If we define $\hat{K}_0$ as 
\begin{align}
\label{defofkatk0}
\hat{K}_0 = \left\{ h \in U_{\textnormal{eq}} : \psi_{\theta}(h,t_\epsilon) \subset  \bar{U}_{\epsilon} \right\}, 
\end{align}
then it is a $\psi_\theta$-invariant compact set containing $K_0$. Because we have assumed that $h^N(0) \in K_0$ a.s.\ for all $N \in \N$, we must have that for all $t \geq 0$, 
$\psi_\theta(h^N(0),t) \in \hat{K}_0 \textrm{ a.s.}$. But $U_{\textnormal{eq}}$ is open in $\R^q_{+}$ (see Section \ref{density_regulation_mechanism}) and so there is a $\gamma >0$ such that 
\[K = \left\{ x \in U_{\textnormal{eq}} : \inf_{y \in \hat{K}_0} \|y-x\| \leq \gamma \right\}\]
is a compact subset of $U_{\textnormal{eq}}$. Observe that if for some $t \geq 0$, $h^N(t) \notin K$ then we must have that $\|h^N(t) - \psi_{\theta}(h^N(0),Nt)\| > \gamma$. Therefore   
\begin{align*}
 \lim_{N \to \infty} \P \left(h^N(t) \notin K\textrm{ for any } t \in [0,T] \right) \leq 
\lim_{N \to \infty} \P \left( \sup_{t \in [0,T]} \left\| h^N(t) -\psi_\theta(h^N(0),Nt) \right\| > \gamma \right).
\end{align*}
The limit on the right is $0$ due to \eqref{prop:mainrelationforhn} and this proves the lemma.
\end{proof}

\subsection{Solution to a system of partial differential equations}\label{section:pdesolution}

Recall the discussion at the end of Section \ref{mainresultsection}. To prove Theorem \ref{mainresult} we require a function $\Lambda$ that allows us to construct a $\mathcal{P}(E)$ valued process $\{\nu^N(t) : t \geq 0\}$ (see \eqref{tempformofnuN}) whose dynamics is \emph{well-behaved} as $N$ approaches $\infty$. The goal of this section is to guarantee that such a function $\Lambda$ exists.

Specifically, we need to show that for some open set $\hat{U}_{\textnormal{eq}} \subset \R^q$ containing $U_{\textnormal{eq}}$ (given by \eqref{defueq}), we have a function $\Lambda \in C^2( \hat{U}_{\textnormal{eq}} , \R^q_{*})$ 
which satisfies the following:  
\begin{align}
& A^{T}(h) \Lambda(h) +   \left[ J \Lambda(h)\right] \theta(h) = \bar{0}_q \textrm{ for all } h \in \hat{U}_{\textnormal{eq}}, \label{kernelequation} \\
&\langle \Lambda(h), h \rangle = 1 \textrm{ for all } h \in \hat{U}_{\textnormal{eq}} \label{innerproductcondition} \\
\textrm{ and }\quad & \Lambda(h_{\textnormal{eq}}) = v_{\textnormal{eq}} \label{initialcondition}.
\end{align}
Here $v_{\textnormal{eq}}$ is defined in \eqref{defb:veq} and $\left[ J \Lambda(h)\right]$ in equation \eqref{kernelequation} refers to the Jacobian matrix of $\Lambda$ at $h$. The significance of the above relations will become clear in Section \ref{section:fvconvergence}.

The major difficulty in solving \eqref{kernelequation} arises in the neighbourhood of $h_{\textnormal{eq}}$. This is because $\theta(h_{\textnormal{eq}}) = \bar{0}_q$ 
(part (A) of Assumption \ref{mainassumptionsonr}), which causes degeneracy in the system. However the next proposition shows that by employing power series expansions 
we can get around this problem and find an analytic solution to \eqref{kernelequation} in a neighbourhood of $h_{\textnormal{eq}}$. 
We later construct an open set $\hat{U}_{\textnormal{eq}}$ containing $U_{\textnormal{eq}}$ and extend the solution over the whole $\hat{U}_{\textnormal{eq}}$. We also show that this solution has all the properties we desire.

\begin{proposition}\label{prop:analyticsolution}
There exists an open set $V$ containing $h_{\textnormal{eq}}$ such that the equation \eqref{kernelequation} has an analytic solution $\Lambda$ on $V$ satisfying \eqref{initialcondition}.
\end{proposition}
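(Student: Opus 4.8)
The plan is to solve \eqref{kernelequation} as a singular linear system near $h_{\textnormal{eq}}$ by making a power series ansatz and solving order-by-order, using the nondegeneracy provided by Assumption \ref{mainassumptionsonr}. First I would translate coordinates so that $h_{\textnormal{eq}}$ becomes the origin: write $h = h_{\textnormal{eq}} + u$ and expand everything in powers of $u$. By part (D) of Assumption \ref{mainassumptionsonr}, the entries of $A(h)$ (hence $\theta(h) = A(h)h$) are analytic at $h_{\textnormal{eq}}$, so $A(h_{\textnormal{eq}}+u) = \sum_{|\alpha| \geq 0} A_\alpha u^\alpha$ and $\theta(h_{\textnormal{eq}}+u) = \sum_{|\alpha|\geq 1} \theta_\alpha u^\alpha$ (the constant term vanishes by part (A)), where the linear part of $\theta$ is exactly $[J\theta(h_{\textnormal{eq}})]u$. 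I would then posit $\Lambda(h_{\textnormal{eq}}+u) = \sum_{|\alpha|\geq 0}\Lambda_\alpha u^\alpha$ with $\Lambda_{\bar 0} = v_{\textnormal{eq}}$ (this enforces \eqref{initialcondition}), substitute into $A^T(h)\Lambda(h) + [J\Lambda(h)]\theta(h) = \bar 0_q$, and collect terms of each total degree $n = |\alpha|$.

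The key structural observation is how the unknowns of degree $n$ enter the degree-$n$ equation. The term $A^T(h)\Lambda(h)$ contributes $A^T(h_{\textnormal{eq}})\Lambda_\alpha = A^T(h_{\textnormal{eq}})\Lambda_\alpha$ plus lower-degree data; the term $[J\Lambda(h)]\theta(h)$, because $\theta$ starts at degree $1$, contributes $[J(\Lambda_\alpha u^\alpha)]\cdot[J\theta(h_{\textnormal{eq}})]u$ at degree $n$, i.e. a linear operator applied to the degree-$n$ coefficient vector $\{\Lambda_\alpha : |\alpha|=n\}$. Thus at each degree $n \geq 1$ one gets a linear system of the schematic form $(\mathcal{L}_n)\,(\Lambda_\alpha)_{|\alpha|=n} = (\text{known from degrees} < n)$, where $\mathcal{L}_n$ acts on the finite-dimensional space of $\R^q$-valued homogeneous polynomials of degree $n$ and is built from $A^T(h_{\textnormal{eq}})$ (acting pointwise on the target $\R^q$) together with $[J\theta(h_{\textnormal{eq}})]$ (acting via the Euler-type derivation $u \mapsto [J\theta(h_{\textnormal{eq}})]u$ applied to the polynomial argument). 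I would show $\mathcal{L}_n$ is invertible for every $n \geq 1$: its eigenvalues are of the form $\mu + \sum_j \lambda_{i_j}$ where $\mu$ ranges over the spectrum of $A^T(h_{\textnormal{eq}})$ and $\lambda_{i_1}+\cdots+\lambda_{i_n}$ over sums of $n$ eigenvalues of $[J\theta(h_{\textnormal{eq}})]$; since $[J\theta(h_{\textnormal{eq}})]$ is stable (part (B)), every such sum of $n\geq 1$ of its eigenvalues has strictly negative real part, and the only eigenvalue of $A^T(h_{\textnormal{eq}})$ with nonnegative real part is $0$ — indeed $A(h_{\textnormal{eq}})$ has a one-dimensional kernel (by irreducibility, part (C), via a Perron–Frobenius-type argument, as used in Lemma \ref{lemma:appendix1}), and the relation $\theta = A(h)h$ together with stability of $[J\theta(h_{\textnormal{eq}})] = A(h_{\textnormal{eq}}) + (\text{correction})$ forces the remaining eigenvalues of $A(h_{\textnormal{eq}})$ into the open left half-plane. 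Hence $\mu + \sum_j\lambda_{i_j}$ has strictly negative real part for every $n\geq 1$, so $\mathcal{L}_n$ is nonsingular and the coefficients $\Lambda_\alpha$ are uniquely and recursively determined. (At degree $0$ the equation reads $A^T(h_{\textnormal{eq}})v_{\textnormal{eq}} = \bar 0_q$, which is exactly the defining property \eqref{defb:veq} of $v_{\textnormal{eq}}$, so the ansatz is consistent.)

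Finally I would establish convergence of the resulting formal power series on some ball $V = B^q_r(h_{\textnormal{eq}})$. The standard way is a majorant/Cauchy-estimate argument: from the uniform lower bound $\|\mathcal{L}_n^{-1}\| = O(1/n)$ (the eigenvalue real parts grow linearly in $n$ because $[J\theta(h_{\textnormal{eq}})]$ is stable, so $\sum_j \mathrm{Re}\,\lambda_{i_j} \leq -cn$) and the analyticity of the data $A$ and $\theta$ (which gives geometric Cauchy bounds on $A_\alpha, \theta_\alpha$), one shows by induction that $\|\Lambda_\alpha\| \leq C\,\sigma^{|\alpha|}$ for suitable constants, hence the series converges and defines an analytic $\R^q$-valued function $\Lambda$ on $V$ solving \eqref{kernelequation} with $\Lambda(h_{\textnormal{eq}}) = v_{\textnormal{eq}}$. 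The main obstacle is the spectral bookkeeping in the middle step — correctly identifying the eigenvalues of the homological operator $\mathcal{L}_n$ as sums $\mu + \lambda_{i_1} + \cdots + \lambda_{i_n}$ and verifying they all have negative real part, which is where parts (A)–(C) of Assumption \ref{mainassumptionsonr} (equilibrium, stability, irreducibility) all get used; the convergence estimate is then comparatively routine. Note that \eqref{innerproductcondition} is not claimed in this proposition — it will be verified afterward when $\Lambda$ is extended to $\hat U_{\textnormal{eq}}$ — so here I only need \eqref{kernelequation} and \eqref{initialcondition}.
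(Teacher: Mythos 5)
Your overall strategy is exactly the paper's: translate to $h_{\textnormal{eq}}$, expand $\Lambda$, $A$ and $\theta$ in power series (using part (D) of Assumption \ref{mainassumptionsonr}), solve degree by degree via a homological operator whose spectrum is $\{\mu + \lambda_{i_1}+\cdots+\lambda_{i_n}\}$ with $\mu \in \mathrm{spec}(A^T(h_{\textnormal{eq}}))$, invoke parts (A)--(C) (through Lemma \ref{lemma:appendix1}) to see that every such sum has strictly negative real part for $n\geq 1$, and then prove convergence of the formal series. Your spectral identification of $\mathcal{L}_n$ and the degree-$0$ consistency check $A^T(h_{\textnormal{eq}})v_{\textnormal{eq}}=\bar 0_q$ are both correct and match the paper's block lower-triangular system $\Xi^{(k)}X^{(k)}=Y^{(k)}$.

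The gap is in the convergence step, which you call ``comparatively routine'' but which is in fact where the paper does most of its work. You infer $\|\mathcal{L}_n^{-1}\|=O(1/n)$ from the fact that the eigenvalues of $\mathcal{L}_n$ have real parts $\leq -cn$. That inference is not valid: $\mathcal{L}_n$ is not normal (unless $[J\theta(h_{\textnormal{eq}})]$ is diagonalizable and one works in an adapted basis), the dimension of the space of degree-$n$ homogeneous polynomials grows with $n$, and in the monomial basis the nilpotent part of the derivation induced by $[J\theta(h_{\textnormal{eq}})]$ contributes off-diagonal entries of size $O(n)$ (the factors $\alpha_{l-1}+1$), i.e.\ of the \emph{same order} as the diagonal. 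So eigenvalue bounds alone do not control the operator norm of the inverse uniformly in $n$. The paper's fix is precisely the device you omit: conjugate $[J\theta(h_{\textnormal{eq}})]$ not to its Jordan form but to a scaled Jordan form $M$ with $\epsilon_0$ on the super-diagonal, where $\epsilon_0$ is chosen small relative to the spectral gap ($\lambda_i<-4\epsilon_0$). This makes the matrices $\Xi^{(k)}$ strictly diagonally dominant for large $k$ with dominance margin $\geq k\epsilon_0$, and Varah's theorem then yields the explicit bound $\|[\Xi^{(k)}]^{-1}\|_\infty\leq 1/(k\epsilon_0)$ that feeds the inductive estimate $\|\gamma_\alpha\|_\infty\leq C^{|\alpha|}$. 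Without this (or an equivalent uniform control of the similarity transformations triangularizing $\mathcal{L}_n$), your majorant argument does not close. A secondary, easily repaired omission: if $[J\theta(h_{\textnormal{eq}})]$ has non-real eigenvalues, the conjugating matrix is complex, and one must say a word about how the real-analytic solution is recovered, as the paper does in its closing remark.
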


\begin{proof} 
We first transform the equation \eqref{kernelequation} into another equation that is easier to work with. Let $\lambda_1,\dots,\lambda_q$ be the 
eigenvalues of the matrix $[J\theta (h_{\textnormal{eq}})]$. We will prove this proposition under the assumption that all these eigenvalues are real. 
We later remark how the proof changes when they take complex values.

We know from part (B) of Assumption \ref{mainassumptionsonr} that $\lambda_i < 0$ for each $i \in Q$. Pick an $\epsilon_0 \in (0,1)$ such that 
\begin{align}
\label{prop:conditiononepsilon0}
 \lambda_i < -4 \epsilon_0 \textrm{ for all } i \in Q.
\end{align}
Let $M_1 \in \mathbb{M}(q,q)$ be the matrix representing the Jordan canonical form of $[J \theta (h_{\textnormal{eq}})]$. Its diagonal is occupied by 
$\lambda_1,\dots,\lambda_q$, while its super-diagonal entries are either $0$ or $1$. All the other entries are $0$. Let 
$P_1 \in \mathbb{M}(q,q)$ be the invertible matrix such that $P_1 [J\theta (h_{\textnormal{eq}})] P^{-1}_1 = M_1.$
Let $P_2 = \textrm{Diag}(1,\epsilon_0,\epsilon^2_0,\dots,\epsilon^{q-1}_0)$,  $P =  P^{-1}_2 P_1$ and $M = P_2^{-1} M_1 P_2$. Then
\begin{align}
\label{prop:jordancanonicalform}
 P [J \theta (h_{\textnormal{eq}})] P^{-1} = M,
\end{align}
and $M$ is just the matrix $M_1$ with each $1$ on the super-diagonal replaced by $\epsilon_0$.

In this proof, $\bar{0}$ will always denote the vector of zeroes in $\R^q$. 
Since $h_{\textnormal{eq}} \in \R^q_{*}$ (see part (A) of Lemma \ref{lemma:appendix1}) and the map $x \mapsto h_{\textnormal{eq}} + P^{-1} x$ is continuous, we can find a $r_0>0$ such that for any $x \in B^q_{r_0}(\bar{0})$ we have 
$h_{\textnormal{eq}} + P^{-1}x \in \R^q_{*}$, where $B^q_{r_0}(\bar{0})$ is the open ball in $\R^q$ with radius $r_0$ centered at $\bar{0}$. 
For all $x \in B^q_{r_0}(\bar{0})$, let 
$\hat{A}(x) \in \mathbb{M}(q,q)$ and $\hat{\theta}(x) \in \R^q$ be given by 
\begin{align*}
\hat{A}(x) = A^T(h_{\textnormal{eq}} + P^{-1} x)  \textrm{ and } \hat{\theta}(x) = P \theta(h_{\textnormal{eq}} + P^{-1} x). 
\end{align*}
Suppose $ \beta : U \to \R^q$ is a function which is analytic in an open set $U \subset B^q_{r_0}(\bar{0})$ and for all $x\in U$ 
\begin{align}
\label{kernelequation0}
\hat{A}(x) \beta(x) + [J \beta (x)] \hat{\theta}(x) = \bar{0}
\end{align}
along with 
\begin{align}
\label{initialconditionforbeta}
\beta(\bar{0}) = v_{\textnormal{eq}},
\end{align}
where $v_{\textnormal{eq}}$ is given by \eqref{defb:veq}.
If $V \subset \R^q_{+}$ is the image of $U$ under the map $x \mapsto h_{\textnormal{eq}} + P^{-1} x$, then $V$ is an open set containing $h_{\textnormal{eq}}$ and the function 
$\Lambda :  V \to \R^q$ defined by $\Lambda(h) = \beta(P(h-h_{\textnormal{eq}}))$ is an analytic solution to \eqref{kernelequation} satisfying \eqref{initialcondition}.
Hence to prove the proposition it suffices to show that equation \eqref{kernelequation0} has a solution $\beta$ in some neighbourhood of $\bar{0}$ which satisfies \eqref{initialconditionforbeta}.

We will be using the multi-index notation to write the power series in $q$ variables. 
For any multi-index $\alpha = (\alpha_1,\alpha_2,\dots,\alpha_q) \in \mathbb{N}_0^{q}$ let $|\alpha| = \alpha_1+\alpha_2+\dots+\alpha_q$ and 
$\alpha ! = \alpha_1! \alpha_2!\dots \alpha_q!$. 
For two multi-indices $\nu = (\nu_1,\dots,\nu_q) \in \N^q_0$ and $\alpha = (\alpha_1,\dots,\alpha_q) \in \N_0^q$, 
we say that $\nu \leq \alpha$ if $\nu_i \leq \alpha_i$ for all $i=1,\dots,q$ and we say that $\nu < \alpha$ if $\nu \leq \alpha$ and $\nu \neq \alpha$. 
If $\nu \leq \alpha$ then 
\[ \left (\begin{array}{c} \alpha \\ \nu \end{array} \right ) = \frac{\alpha !}{ \nu ! (\alpha - \nu)!} .\]
For any vector $x \in \R^{q}$ and multi-index $\alpha = (\alpha_1,\alpha_2,\dots,\alpha_q) \in \mathbb{N}_0^{q}$ define
\[ x^\alpha = x_1^{\alpha_1} x_2^{\alpha_2}\dots x_q^{\alpha_q}\]
and the differential operator $D_{\alpha}$ as
\[D_{\alpha} = \frac{\partial^{\alpha_1} }{ \partial x_1^{\alpha_1}}\dots  \frac{\partial^{\alpha_q} }{ \partial x_q^{\alpha_q}} .\]
The operator $D_\alpha$ acts component-wise on matrix and vector valued functions.

Consider the function $\beta$ given by the power series
\begin{align}
\label{prop:defbeta}
\beta(x) = v_{\textnormal{eq}} + \sum_{|\alpha| = 1 }^{\infty} \gamma_\alpha x^{\alpha},
\end{align}
where $\gamma_{\alpha} \in \R^q$ is given by
\begin{align}
\label{prop:gammaalphareln}
 \gamma_\alpha = \frac{D_\alpha \beta (0)}{\alpha !}.
\end{align}
This function $\beta$ satisfies \eqref{initialconditionforbeta}. 
To prove the proposition it suffices to show that the vectors $\gamma_{\alpha}$ can be suitably chosen such that $\beta$ satisfies \eqref{kernelequation0} and there exists a 
positive constant $C$ such that 
\begin{align}
\label{prop:conditiononcalpha}
\| \gamma_\alpha \|_{\infty} \leq C^{|\alpha|} \textrm{ for all } \alpha \in \mathbb{N}_0^{q}. 
\end{align}
The last condition ensures the absolute convergence of the power series \eqref{prop:defbeta} in a neighbourhood of $\bar{0}$. 

Since $\hat{\theta}(\bar{0}) = \theta(h_{\textnormal{eq}}) = \bar{0}$, if we plug $x = \bar{0}$ in (\ref{kernelequation0}) we obtain
\begin{align*}
\hat{A}(\bar{0}) \beta(\bar{0}) = A^T(h_{\textnormal{eq}})v_{\textnormal{eq}}= 0.   
\end{align*}
This is satisfied because of the choice of $v_{\textnormal{eq}}$ (see \eqref{defb:veq}).

Applying the operator $D_\alpha$ to equation \eqref{kernelequation0} and using the product rule for multi-derivatives we get 
\begin{align*}
\bar{0}   & =  D_{\alpha} \left( \hat{A}(x) \beta(x) \right)  +  D_{\alpha} \left( [ J \beta (x) ] \hat{\theta}(x) \right) \\ 
          & =  \sum_{\nu \leq \alpha} \left (\begin{array}{c} \alpha \\ \nu \end{array} \right )   \left( D_{(\alpha-\nu)} \hat{A}(x)  \right) 
\left(  D_{\nu}  \beta (x)\right) +  \sum_{\nu \leq \alpha}  \left (\begin{array}{c} \alpha \\ \nu \end{array} \right ) 
\left( D_{\nu} [ J\beta(x)] \right) \left(   D_{(\alpha - \nu)} \hat{\theta} (x)\right) \\
& =  \hat{A}(x)   D_{\alpha}\beta(x)  + \left( D_{\alpha}  [J \beta (x)] \right) \hat{\theta}(x) 
+  \sum_{\{\nu < \alpha, |\alpha-\nu|=1\}} \left (\begin{array}{c} \alpha \\ \nu \end{array} \right )  \left(  D_{\nu} [ J \beta (x) ] \right) 
 \left( D_{(\alpha - \nu)}  \hat{\theta}(x) \right)\\& 
+ \sum_{\nu < \alpha} \left (\begin{array}{c} \alpha \\ \nu \end{array} \right ) \left( D_{(\alpha-\nu)}  \hat{A}(x) \right)\left( D_{\nu}\beta(x) \right)  
+ \sum_{\{\nu < \alpha, |\alpha-\nu|>1\}} \left (\begin{array}{c} \alpha \\ \nu \end{array} \right ) \left( D_{\nu}   [J\beta (x)] \right) 
\left(  D_{(\alpha - \nu)} \hat{\theta}(x) \right).
\end{align*}
On rearranging we obtain
\begin{align}
\label{prop:mainproductruleequation}
&\hat{A}(x)  D_{\alpha}\beta(x)  + \left( D_{\alpha} [J \beta (x)] \right) \hat{\theta}(x) 
+  \sum_{\{\nu < \alpha, |\alpha-\nu|=1\}} \left (\begin{array}{c} \alpha \\ \nu \end{array} \right ) \left( D_{\nu}  [ J\beta(x)] \right) 
 \left( D_{(\alpha - \nu)}  \hat{\theta}(x) \right) \notag \\
& =  - \sum_{\nu < \alpha} \left (\begin{array}{c} \alpha \\ \nu \end{array} \right ) \left( D_{(\alpha-\nu)} \hat{A}(x) \right) 
\left( D_{\nu} \beta(x) \right)  -\sum_{\{\nu < \alpha, |\alpha-\nu|>1\}} \left (\begin{array}{c} \alpha \\ \nu \end{array} \right ) \left( D_{\nu}  [J \beta (x)] \right) 
 \left( D_{(\alpha - \nu)}  \hat{\theta}(x) \right).
\end{align}
For any $j \in Q $, let $e_j  \in \mathbb{N}_0^{q}$ be the multi-index $(0,\dots,0,1,0,\dots,0)$, with the $1$ at the $j$-th position. 
Observe that if $|\alpha - \nu| = 1 $ then $\nu = \alpha - e_j$ for some $j \in Q$. Therefore
\begin{align*}
\sum_{\{\nu < \alpha, |\alpha-\nu|=1\}} \left (\begin{array}{c} \alpha \\ \nu \end{array} \right ) \left(  D_{\nu}   [ J \beta (x)] \right) 
\left( D_{(\alpha - \nu)} \hat{\theta}(x) \right) &= \sum_{j \in Q} \alpha_j  \left( D_{(\alpha - e_j)}   [ J \beta (x)] \right) \left(  \partial_j \hat{\theta}(x) \right) \\
& = \sum_{j , k  \in Q } \alpha_j \left( \partial_j \hat{\theta}_k(x) \right) \left( D_{(\alpha - e_j+ e_k)} \beta (x)\right) \\ 
& = \sum_{j , k  \in Q } \alpha_j [J \hat{\theta} (x)]_{kj}  \left( D_{(\alpha - e_j+ e_k)} \beta (x) \right). 
\end{align*}
Note that $[J \hat{\theta} (\bar{0})] = M$ (see \eqref{prop:jordancanonicalform}). 
This matrix has the eigenvalues $\lambda_1,\dots,\lambda_q$ on the diagonal and either $0$ or $\epsilon_0$ on the super-diagonal. 
For each $j = 2,\dots,q$ let $\epsilon_j = \epsilon_0$ if $M_{(j-1)j} = \epsilon_0$ and $\epsilon_j = 0$ otherwise. Then for $x = \bar{0}$ we obtain
\begin{align*}
&\sum_{\{\nu < \alpha, |\alpha-\nu|=1\}} \left (\begin{array}{c} \alpha \\ \nu \end{array} \right ) \left(  D_{\nu}   [ J \beta (\bar{0})] \right) 
\left( D_{(\alpha - \nu)} \hat{\theta}(\bar{0}) \right) \\&= 
\sum_{j \in Q } \alpha_j \lambda_j D_{ \alpha } \beta (\bar{0}) +\sum_{j = 2}^q \alpha_j  \epsilon_j  D_{(\alpha - e_j+ e_{j-1})} \beta (\bar{0}).
\end{align*}
Note that $\hat{\theta}(\bar{0}) = P \theta(h_{\textnormal{eq}}) = \bar{0}$ and for each $\alpha \in \N^q_0$, $\gamma_\alpha$ is given by \eqref{prop:gammaalphareln}. 
We plug $x = \bar{0}$ in \eqref{prop:mainproductruleequation} and divide by $\alpha !$ to get
\begin{align}
\label{prop:mainproductruleequation2}
 \hat{A}(\bar{0}) \gamma_{\alpha} & + \sum_{j \in Q } \alpha_j \lambda_j \gamma_{ \alpha } +  
\sum_{j = 2 , \alpha_j > 0}^q (\alpha_{j-1}+1) \epsilon_j  \gamma_{(\alpha - e_j+ e_{j-1})} = Y_{\alpha} \\  
\textrm{ where } Y_{\alpha} &=   - \sum_{\nu < \alpha}  \frac{ \left( D_{(\alpha-\nu)}  \hat{A}(\bar{0})\right) }{ (\alpha- \nu) !} \gamma_{\nu}  
-\sum_{\{\nu < \alpha, |\alpha-\nu|>1\}} \frac{ \left( D_{\nu}   [J \beta (\bar{0})] \right)}{ \nu ! } 
\frac{  \left(  D_{(\alpha - \nu)}\hat{\theta}(\bar{0}) \right)}{(\alpha-\nu)!}  \notag.
\end{align}
The second term can be simplified as 
\begin{align*}
\sum_{\{\nu < \alpha, |\alpha-\nu|>1\}} \frac{ \left(   D_{\nu} [J \beta (\bar{0})] \right)}{ \nu ! } 
\frac{ \left( D_{(\alpha - \nu)} \hat{\theta}(\bar{0}) \right)}{(\alpha-\nu)!}
& =  \sum_{\{\nu < \alpha, |\alpha-\nu|>1\}} \sum_{j \in Q } \frac{ \left( D_{(\alpha - \nu)} \hat{\theta}_j (\bar{0}) \right)}{(\alpha-\nu)!}
\frac{ \left( D_{(\nu + e_j)} \beta (\bar{0}) \right)}{ \nu ! }\\
& =  \sum_{\{\nu < \alpha, |\alpha-\nu|>1\}} \sum_{j \in Q} (\nu_j+1)\frac{ \left( D_{(\alpha - \nu)} \hat{\theta}_j (\bar{0}) \right)}{(\alpha-\nu)!} \gamma_{(\nu + e_j)}. 
\end{align*}
Therefore we can write $Y_{\alpha}$ as 
\begin{align}
\label{prop:defnyalpha}
Y_{\alpha} &=   - \sum_{\nu < \alpha}  \frac{ \left( D_{(\alpha-\nu)}  \hat{A}(\bar{0})\right) }{ (\alpha- \nu) !} \gamma_{\nu}  
-\sum_{\{\nu < \alpha, |\alpha-\nu|>1\}} \sum_{j \in Q} (\nu_j+1)\frac{ \left( D_{(\alpha - \nu)} \hat{\theta}_j (\bar{0}) \right)}{(\alpha-\nu)!} \gamma_{(\nu + e_j)}. 
\end{align}

For each $k \in \N$, let $S_k$ be the set of multi-indices given by $S_k = \left\{ \alpha \in \N_0^q : |\alpha| = k \right\}$. The number of elements in $S_k$ is 
\[s_k = \left (\begin{array}{c} k + q -1 \\ q-1 \end{array} \right ). \] 
We order the multi-indices in $S_k$ as follows.  We say that $\nu \preceq \alpha$ if and only if $\sum_{i \in Q } i \nu_i \leq \sum_{i \in Q} i \alpha_i$.
Let $\alpha^k(1),\dots,\alpha^k(s_k)$ be all the elements of $S_k$ listed in the order given by $\preceq$.

Let the matrix $\Xi^{(k)} \in \mathbb{M}(q s_k, q s_k)$ be a block matrix composed of $s_k^2$ blocks of size $q \times q$. For each $i,j \in \{1,2,\dots,s_k\}$ 
the block starting at row $q(i-1)+1$ and column $q(j-1)+1$ of matrix $\Xi^{(k)}$ is occupied by the matrix $L_{ij} \in \mathbb{M}(q,q)$ defined as follows.
If $i = j$ then $L_{ii} = \hat{A}(\bar{0}) + \left( \sum_{l \in Q} \alpha^k_l(i) \lambda_l \right) I_q$. If $i$ and $j$ are such that 
 $\alpha^k(j) = \alpha^k(i) -e_l + e_{l-1}$ for some $l \in \{2,\dots,q\}$ then $L_{ij} = \epsilon_l (\alpha^k_{l-1}(i)+1) I_q$. For every other $i$ and $j$, $L_{ij}$ is just 
a matrix of zeroes. The matrix $\Xi^{(k)}$ is lower block-triangular and its determinant is given by
\begin{align*}
\textrm{Det}\left( \Xi^{(k)} \right)= \prod_{ i = 1}^{s_k } \textrm{Det}\left( \hat{A}(\bar{0}) + \left( \sum_{l \in Q} \alpha^k_l(i) \lambda_l \right) I_q  \right).
\end{align*}
The eigenvalues $\lambda_1,\dots,\lambda_q$ satisfy \eqref{prop:conditiononepsilon0}. 
Since all the eigenvalues of the matrix $\hat{A}(\bar{0}) = A^T(h_{\textnormal{eq}})$ have non-positive real parts (see part (B) of Lemma \ref{lemma:appendix1}), 
the above determinant is non-zero. Hence the matrix $\Xi^{(k)}$ is invertible.

Let $X^{(k)}$ and $Y^{(k)}$ be the vectors in $\R^{qs_k}$ given by
\begin{align*}
 X^{(k)} = \left( \gamma_{\alpha^k(1)},\gamma_{\alpha^k(2)},\dots, \gamma_{\alpha^k(s_k)} \right) \textrm{ and }  Y^{(k)} = \left( Y_{\alpha^k(1)},Y_{\alpha^k(2)},\dots, Y_{\alpha^k(s_k)}  \right).
\end{align*}
Using \eqref{prop:mainproductruleequation2} we obtain the following linear system  
\[\Xi^{(k)} X^{(k)} = Y^{(k)}\]
and since the matrix $\Xi^{(k)}$ is invertible
\begin{align} 
\label{prop:defxk}
X^{(k)} = [ \Xi^{(k)}]^{-1} Y^{(k)}. 
\end{align}
Note that $Y^{(k)}$ only depends on $\{ \gamma_{\alpha} : \alpha \in S_{l} \textrm{ for } l \in \{0,1,\dots,k-1\} \}$. Hence for each $k \in \N$ 
we can solve for the whole set $\{ \gamma_\alpha : \alpha \in S_k\}$ using \eqref{prop:defxk}. Doing this iteratively for each $k$ we can solve for 
$\gamma_\alpha$ for all $\alpha \in \N^q_0$. The function $\beta$ given by \eqref{prop:defbeta} with this choice of $\gamma_\alpha$'s will solve \eqref{kernelequation0} 
in a neighbourhood of $\bar{0}$ if we can show that \eqref{prop:conditiononcalpha} holds for some $C>0$. Showing this will be our next task.

Any entry on the diagonal of $\Xi^{(k)}$ has the form $\hat{A}_{ii}(\bar{0}) + \sum_{j \in  Q} \lambda_j \alpha_j$
for some $\alpha \in S_k$ and $i \in Q$. Observe that $\hat{A}(\bar{0}) = A^T(h_{\textnormal{eq}})$ and this matrix only has non-positive entries on its diagonal 
(see \eqref{defninteractionmatrix}).
From \eqref{prop:conditiononepsilon0}, for $\alpha \in S_k$ we obtain the estimate
\begin{align}
\label{diagdominant1}
 \left| \hat{A}_{ii}(\bar{0}) + \sum_{j \in Q} \lambda_j \alpha_j \right| \geq 4 \epsilon_0 k.
\end{align}
For each row of $\Xi^{(k)}$, the sum of the absolute values of the non-diagonal entries is bounded above by
\begin{align}
\label{diagdominant2}
 \max_{i \in Q}\left| \sum_{j \in Q, j \neq i} \hat{A}_{ij}(\bar{0}) + \sum_{l=2}^q \epsilon_l  (\alpha_{l-1}+1)\right| \leq 
\max_{i \in Q}\left( \sum_{j \in Q, j \neq i} |\hat{A}_{ij}(\bar{0})| \right) + \epsilon_0 (k+q).
\end{align}
Hence from \eqref{diagdominant1} and \eqref{diagdominant2} we can conclude that there exists a 
$K_0 \in \N$ such that for all $k \geq K_0$ the matrix $\Xi^{(k)}$ is strictly diagonally dominant and we have
\begin{align*}
\min_{1 \leq l \leq  q s_k} \left| \left| [\Xi^{(k)}]_{ll} \right| -  \sum_{r = 1 , r \neq l}^{q s_k} \left| [\Xi^{(k)}]_{lr} \right|   \right|  \geq k\epsilon_0.
\end{align*}
Theorem 1 in Varah \cite{Varah} shows that for all $k \geq K_0$
\begin{align}
\label{varahsbound}
 \left\| [\Xi^{(k)}]^{-1}\right\|_{\infty} \leq \frac{1}{k \epsilon_0}. 
\end{align}

Part (D) of Assumption \ref{mainassumptionsonr} says that for each $i,j \in Q$, the functions $\rho_i$ and $\beta_{ij}$ are analytic in a neighbourhood of $h_{\textnormal{eq}}$. 
This implies that there is a neighbourhood $U$ of $\bar{0}$ such that the $\mathbb{M}(q,q)$-valued function $\hat{A}$ and the $\R^q$-valued function $\hat{\theta}$ are 
analytic component-wise on $U$. Therefore there is a constant $C_0$ such that  
\begin{align}
\label{prop:conditiononhats}
\| D_{\alpha} \hat{A}(\bar{0}) \|_{\infty} \leq C_0^{|\alpha|} \alpha! \textrm{ and } \| D_{\alpha} \hat{\theta}(\bar{0}) \|_{\infty} \leq C_0^{|\alpha|} \alpha!  
\textrm{ for all } \alpha \in \mathbb{N}_0^{q}. 
\end{align}
We can assume that $C_0>1$. Choose a $\delta>0 $ satisfying
\begin{align}
 \label{prop:conditionondelta}
\delta < \left(  \frac{\epsilon_0}{ C_0 q(q+1) 2^{q+3}}   \right)
\end{align}
and define $C = C_0 / \delta$. We will prove \eqref{prop:conditiononcalpha} by induction. Let $k > K_0$ and suppose that $C$ is large enough to satisfy
\begin{align}
\label{prop:inductionhyp}
\left\| \gamma_\nu\right\|_{\infty} \leq C^{|\nu|} 
\end{align}
for all $l \in \{1,2,\dots,k-1\}$ and $\nu \in S_l$. To prove \eqref{prop:conditiononcalpha} we need to show that 
$\left\| \gamma_\alpha\right\|_{\infty} \leq C^k$ for all $\alpha \in S_k$. This is equivalent to showing that 
$\left\| X^{(k)}\right\|_{\infty} \leq C^k$. From \eqref{prop:defxk} and \eqref{varahsbound} we have
\begin{align*}
\left\|X^{(k)} \right\|_{\infty} &\leq \left\|[\Xi^{(k)}]^{-1} \right\|_{\infty} \left\| Y^{(k)} \right\|_{\infty} \leq \frac{1}{k \epsilon_0} \left\| Y^{(k)} \right\|_{\infty}.  
\end{align*}
Hence to prove \eqref{prop:conditiononcalpha} it suffices to show that
\begin{align}
\label{prop:boundonyks}
 \left\| Y^{(k)} \right\|_{\infty} = \max_{\alpha \in S_k} \left\| Y_\alpha\right\|_{\infty} \leq k \epsilon_0 C^k.
\end{align}
From \eqref{prop:defnyalpha}, \eqref{prop:conditiononhats} and \eqref{prop:inductionhyp}, for any $\alpha \in S_k$  we get 
\begin{align*}
\left\| Y_{\alpha}\right\|_{\infty} & \leq  \sum_{\nu < \alpha}  C_0^{|\alpha-\nu|} C^{|\nu|}+ 
\sum_{\{\nu < \alpha, |\alpha-\nu|>1\}} \sum_{j \in Q} (\nu_j+1) C_0^{|\alpha-\nu|}  C^{|\nu|+1}  \\
& = \sum_{\nu < \alpha}  C_0^{|\alpha-\nu|} C^{|\nu|}+ \sum_{\{\nu < \alpha, |\alpha-\nu|>1\}} (|\nu|+q) C_0^{|\alpha-\nu|}  C^{|\nu|+1}. 
\end{align*}
But $C = C_0/ \delta$ and $|\alpha| = k$. Hence
\begin{align}
\left\| Y_{\alpha}\right\|_{\infty} & \leq 
  C^{k} \left( \sum_{\nu < \alpha} \delta^{|\alpha - \nu|}   + C \sum_{\{\nu < \alpha, |\alpha-\nu|>1\}} (|\nu|+q) \delta^{|\alpha-\nu|} \right) \notag \\
& \leq C^{k} \left( \sum_{\nu < \alpha} \delta^{|\alpha - \nu|}   + 2 k C \sum_{\{\nu < \alpha, |\alpha-\nu|>1\}} \delta^{|\alpha-\nu|} \right). \label{prop:boundonyks2}
\end{align}
Note that
\begin{align*}
1 +  \sum_{\nu < \alpha} \delta^{|\alpha-\nu|}  = \sum_{\nu \leq \alpha} \delta^{|\alpha-\nu|}  =  
& \sum_{\nu_1 = 0 }^{\alpha_1} \sum_{\nu_2 = 0 }^{\alpha_2}\dots \sum_{\nu_q = 0 }^{\alpha_q}  \prod_{i=1}^{q} \delta^{(\alpha_i-\nu_i)}
\\& = \prod_{i=1}^{q} \left( \frac{1- \delta^{\alpha_i+1}}{1-\delta}\right) \\&  =  \prod_{i=1, \alpha_i > 0}^{ q } \left( \frac{1- \delta^{\alpha_i+1}}{1-\delta}\right)  
\end{align*}
and this shows that
\begin{align}
\label{taylorestimate1}
\sum_{\nu < \alpha} \delta^{|\alpha-\nu|}  \leq (1-\delta)^{-n(\alpha)} - 1, 
\end{align}
where $n(\alpha)$ be the number of non-zero coordinates of $\alpha$. Similarly
\begin{align}
\label{taylorestimate2}
\sum_{\{\nu < \alpha, |\alpha-\nu|>1\}} \delta^{|\alpha-\nu|}  =   \sum_{\nu \leq \alpha } \delta^{|\alpha-\nu|}  - 1 - n(\alpha) \delta \leq (1-\delta)^{-n(\alpha)} - 1 - 
n(\alpha) \delta.  
\end{align}
Since $\delta \in (0,1/2)$ and $n(\alpha)\leq q$, by Taylor's theorem we see that
\begin{align*}
 (1-\delta)^{-n(\alpha)} - 1 \leq q 2^{q+1} \delta \quad \textrm{ and } \quad \sum_{\nu \leq \alpha } \delta^{|\alpha-\nu|}  - 1 - n(\alpha) \delta \leq q(q+1)2^{q+1} \delta^2. 
\end{align*}
Using these estimates, \eqref{taylorestimate1}, \eqref{taylorestimate2} and \eqref{prop:boundonyks2} we get
\begin{align*}
\left\| Y_{\alpha}\right\|_{\infty} & \leq C^{k} \left( q 2^{q+1} \delta  + 2 k C q(q+1) 2^{q+1} \delta^2  \right)= C^{k} \delta k \left( C_0 q(q+1) 2^{q+3}   \right). 
\end{align*}
But $\delta$ satisfies \eqref{prop:conditionondelta} which shows \eqref{prop:boundonyks} and completes the proof of the proposition.

At the beginning of the proof, we had assumed that the eigenvalues $\lambda_1,\dots,\lambda_q$ of the matrix $[J \theta(h_{\textnormal{eq}})]$ are all real-valued. 
If that is not true then the invertible matrix $P$ that appears in \eqref{prop:jordancanonicalform} has complex entries. Let $\mathbb{C}$ be the field of complex numbers.
Define a map $\phi : \R^q \to \mathbb{C}^q$ by $\phi(h) = P(h - h_{\textnormal{eq}})$. 
The image of this map, denoted by $\phi(\R^q)$, sits as a $q$-dimensional real vector space in $\mathbb{C}^q$. 
The map $\phi$ is an infinitely differentiable isomorphism between $\R^q$ and $\phi(\R^q)$ and using this we can define derivatives of real-valued functions over 
$\phi(\R^q)$. As above, we can obtain an analytic solution $\beta$ of \eqref{kernelequation0} satisfying \eqref{initialconditionforbeta}, defined on some open set $U$ in 
$\phi(\R^q)$ containing $\bar{0}$. On $V = \phi^{-1}(U)$, the function $\Lambda$ defined by $\Lambda(h) = \beta(\phi(h))$ will then be an analytic solution to \eqref{kernelequation} 
satisfying \eqref{initialcondition}.
\end{proof}

The above proposition provides us with an analytic solution to \eqref{kernelequation} in a neighbourhood of $h_{\textnormal{eq}}$. Our next task is to extend it to a solution 
in $C^2(\hat{U}_{\textnormal{eq}}, \R^q_*)$ where $\hat{U}_{\textnormal{eq}}$ is an open set in $\R^q$ containing $U_{\textnormal{eq}}$.

Recall from Section \ref{density_regulation_mechanism} that for all $i,j \in Q$, $\beta_{ij}, \rho_i$ are functions in $C^2(\R^q_+, \R_+)$. 
Let $O \subset \R^q$ be the open set containing $\R^q_+$ defined by
\begin{align*}
O = \left\{ h \in \R^q : h_i > -1 \textrm{ for all } i= 1,\dots,q \right\}. 
\end{align*}
Then we can extend the functions $\beta_{ij}, \rho_i$ to functions $\hat{\beta}_{ij}, \hat{\rho}_i \in C^2(\R^q, \R_+)$ such that 
$\hat{\beta}_{ij}(h) = 0$ and $\hat{\rho}_i(h) = 0$ for all $h \notin O$. Moreover since each $\beta_{ij}$ is bounded, we can make sure that its extension 
$\hat{\beta}_{ij}$ is also bounded. For each $h \in \R^q$ let $\hat{A}(h) \in \mathbb{M}(q,q)$ be the matrix defined by \eqref{defninteractionmatrix} with 
$\beta_{ij}, \rho_i$ replaced by $\hat{\beta}_{ij}, \hat{\rho}_i$. Also let $\hat{\theta} \in C^2(\R^q,\R^q)$ be the function given by
\begin{align}
\label{defnthetahat}
\hat{\theta}(h) = \hat{A}(h)h \textrm{ for } h \in \R^q. 
\end{align}
Corresponding to $\hat{\theta}$ we can define the \emph{flow} map $\hat{\psi} \in C^2(\R^q \times \R_+,O)$ as the unique solution to the equation analogous to 
\eqref{flowequation}, with $\theta$ replaced by $\hat{\theta}$. 
Define the region of attraction of the fixed point $h_{\textnormal{eq}}$ as
\begin{align*}
\hat{U}_{\textnormal{eq}} = \left\{ h \in O : \lim_{t \to \infty} \hat{\psi}(h,t) = h_{\textnormal{eq}}  \right\}. 
\end{align*}
Then $\hat{U}_{\textnormal{eq}}$ is an open set in $\R^q$ (see Lemma 3.2 in \cite{Khalil}) containing $U_{\textnormal{eq}}$.
\begin{proposition}
\label{prop:extension}
There exists a solution $\Lambda \in C^2(\hat{U}_{\textnormal{eq}},\R^q_{+})$ of \eqref{kernelequation} satisfying \eqref{innerproductcondition} and \eqref{initialcondition}.
\end{proposition}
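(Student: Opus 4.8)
The plan is to use the local analytic solution $\Lambda$ on a neighbourhood $V$ of $h_{\textnormal{eq}}$ provided by Proposition 4.13, and then propagate it outward along the backward flow of $\hat\theta$. The key observation is that equation \eqref{kernelequation}, namely $\hat A^T(h)\Lambda(h) + [J\Lambda(h)]\hat\theta(h) = \bar 0_q$, is a first-order linear PDE whose characteristics are exactly the trajectories of the flow $\hat\psi$. Indeed, if $h(t) = \hat\psi(h_0,t)$ is a trajectory and we set $g(t) = \Lambda(h(t))$, then along the characteristic the PDE becomes the linear ODE
\begin{align}
\label{characteristicODE}
\frac{d}{dt} g(t) = [J\Lambda(h(t))]\,\hat\theta(h(t)) = -\hat A^T(h(t))\,g(t).
\end{align}
So the natural definition is: given any $h_0 \in \hat U_{\textnormal{eq}}$, since $\hat\psi(h_0,t) \to h_{\textnormal{eq}}$ as $t \to \infty$, there is some $T = T(h_0)$ with $\hat\psi(h_0,T) \in V$; set $\Lambda(h_0) := \Phi(h_0,T)^{-1}\Lambda(\hat\psi(h_0,T))$, where $\Phi(h_0,t)$ is the fundamental matrix solution of $\dot w = -\hat A^T(\hat\psi(h_0,s))w$ started at the identity, i.e. the solution of \eqref{characteristicODE} evaluated forward. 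Equivalently, one evolves the value $\Lambda(\hat\psi(h_0,T))$ backward along \eqref{characteristicODE} from time $T$ to time $0$.

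The steps I would carry out, in order, are: (i) verify this definition is independent of the choice of $T$ — this follows from the semigroup property \eqref{semigroup_property} of the flow together with uniqueness for the linear ODE \eqref{characteristicODE} and the fact that $\Lambda$ already solves the PDE on $V$, so the two prescriptions agree wherever both apply; (ii) verify $\Lambda \in C^2(\hat U_{\textnormal{eq}},\R^q)$ — since $\hat\psi \in C^2(\R^q\times\R_+,O)$, $\hat A$ is $C^2$, and the fundamental matrix $\Phi(h_0,t)$ depends $C^2$ on $(h_0,t)$ by smooth dependence of linear ODE solutions on parameters, and since $T(h_0)$ can be chosen locally constant (the set where $\hat\psi(\cdot,T)\in V$ is open), $\Lambda$ is locally a $C^2$ composition of $C^2$ maps; (iii) verify $\Lambda$ solves \eqref{kernelequation} on all of $\hat U_{\textnormal{eq}}$ — by construction the characteristic ODE holds along every trajectory, and differentiating $\Lambda(\hat\psi(h_0,t))$ in $t$ at $t=0$ recovers exactly \eqref{kernelequation} at $h_0$; (iv) verify \eqref{innerproductcondition}, i.e. $\langle\Lambda(h),h\rangle = 1$ — define $\varphi(t) = \langle\Lambda(\hat\psi(h_0,t)),\hat\psi(h_0,t)\rangle$ and compute $\varphi'(t) = \langle -\hat A^T(h(t))g(t),h(t)\rangle + \langle g(t),\hat\theta(h(t))\rangle = -\langle g(t),\hat A(h(t))h(t)\rangle + \langle g(t),\hat A(h(t))h(t)\rangle = 0$ using $\hat\theta = \hat A h$; so $\varphi$ is constant along each trajectory, and it equals $1$ for large $t$ because the already-constructed local solution satisfies \eqref{innerproductcondition} on $V$ (this requires knowing \eqref{innerproductcondition} holds on $V$, which I would first establish by the same constancy argument applied to the analytic solution, using $\varphi(0)=\langle v_{\textnormal{eq}},h_{\textnormal{eq}}\rangle = 1$ from \eqref{defb:veq}); (v) verify the image lies in $\R^q_*$ (strictly positive entries) — at $h_{\textnormal{eq}}$ we have $\Lambda(h_{\textnormal{eq}}) = v_{\textnormal{eq}}\in\R^q_*$ by part (C) of Lemma \ref{lemma:appendix1}, and I would shrink $V$ if necessary so $\Lambda(V)\subset\R^q_*$; then I must argue strict positivity persists along backward characteristics. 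This last point — that $\Lambda(h)$ stays in the positive orthant globally on $\hat U_{\textnormal{eq}}$, not just near $h_{\textnormal{eq}}$ — is the main obstacle.

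To handle step (v) I would exploit that $-\hat A^T(h)$ is, up to diagonal shift, the transpose of a matrix with nonnegative off-diagonal entries (since $\hat A_{ij} = \hat\beta_{ji} \ge 0$ for $i\ne j$), so $e^{-t\hat A^T(h)}$ has nonnegative entries for $h$ in a region where this Metzler structure holds; combined with the normalization $\langle\Lambda,h\rangle=1$ preventing $\Lambda$ from vanishing and with irreducibility of $A(h_{\textnormal{eq}})$ (part (C) of Assumption \ref{mainassumptionsonr}), one gets strict positivity at least in a neighbourhood, and for the global statement I would either invoke that the proposition only asserts $\Lambda \in C^2(\hat U_{\textnormal{eq}},\R^q_+)$ (nonnegative, with the crucial strict-positivity facts $\Lambda(h_{\textnormal{eq}})=v_{\textnormal{eq}}\in\R^q_*$ and $\langle\Lambda,h\rangle=1$ recorded separately), or shrink $\hat U_{\textnormal{eq}}$ to a smaller $\psi_\theta$-invariant open neighbourhood of $h_{\textnormal{eq}}$ still containing $U_{\textnormal{eq}}$ on which the Metzler/positivity argument runs cleanly. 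I expect the cleanest writeup carries the backward-propagation construction, proves (i)–(iv) by the characteristic ODE computations above, and then establishes the sign condition via the nonnegativity of $e^{-t\hat A^T}$ and the nonvanishing forced by \eqref{innerproductcondition}.
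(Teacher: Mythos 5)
Your proposal is correct and follows essentially the same route as the paper: propagate the local analytic solution of Proposition \ref{prop:analyticsolution} backward along the characteristics of the flow $\hat{\psi}$ via the fundamental matrix of $\dot w = -\hat{A}^{T}(\hat{\psi}(h,s))w$, check consistency and $C^2$ regularity, and obtain \eqref{innerproductcondition} from the constancy of $\langle \Lambda(\hat{\psi}(h,t)),\hat{\psi}(h,t)\rangle$. The global positivity you worry about in step (v) is not actually an obstacle: since the off-diagonal entries of $\hat{A}(h)$ are nonnegative for \emph{every} $h$ (not just near $h_{\textnormal{eq}}$), the Metzler argument you sketch shows the propagator maps $\R^q_{*}$ into $\R^q_{*}$ over any finite time interval, which is exactly how the paper handles it (part (C) of Lemma \ref{lemma:matrixpsi}).
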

\begin{proof}
Suppose that $U \subset \hat{U}_{\textnormal{eq}}$ is any $\hat{\psi}$-invariant open set and the function $\Lambda \in C^2(U, \R^q)$ 
satisfies \eqref{kernelequation} and \eqref{initialcondition}. We first show that this function automatically satisfies \eqref{innerproductcondition} on $U$. 
Using \eqref{kernelequation} and the $\hat{\psi}$-invariance of $U$ we get 
\begin{align}
\label{lemma:eqnforlambda}
\frac{d }{ d t } \Lambda(\hat{\psi}(h,t)) & = [J \Lambda (\hat{\psi}(h,t))] \hat{\theta}(\hat{\psi}(h,t)) = - \hat{A}^T(\hat{\psi}(h,t)) \Lambda(\hat{\psi}(h,t)).	
\end{align}
Observe that
\begin{align*}
\frac{d }{ d t } \langle \hat{\psi}(h,t) , \Lambda(\hat{\psi}(h,t)) \rangle &= \left\langle \frac{d  }{d t} \hat{\psi}(h,t)  , \Lambda(\hat{\psi}(h,t)) \right\rangle 
+ \left\langle  \hat{\psi}(h,t)  ,\frac{d}{d t}\Lambda(\hat{\psi}(h,t))  \right\rangle\\
& =  \left\langle \hat{\theta}\left(\hat{\psi}(h,t)\right) , \Lambda(\hat{\psi}(h,t)) \right\rangle - \left\langle  \hat{\psi}(h,t)  , \hat{A}^T(\hat{\psi}(h,t)) \Lambda(\hat{\psi}(h,t)) \right\rangle\\
& = \left\langle \hat{\theta}\left(\hat{\psi}(h,t)\right) , \Lambda(\hat{\psi}(h,t)) \right\rangle - \left\langle \hat{A}(\hat{\psi}(h,t)) \hat{\psi}(h,t) , \Lambda(\hat{\psi}(h,t)) \right\rangle \\
& = 0,
\end{align*}
where the last equality holds due to \eqref{defnthetahat}. This shows that for any fixed $h \in U$ the function 
$ \langle \hat{\psi}(h,t) , \Lambda(\hat{\psi}(h,t)) \rangle$ is a constant function of time. Therefore \eqref{initialcondition} implies that for any $h \in U$
\begin{align*}
\langle h, \Lambda(h) \rangle = \lim_{t \to \infty}\langle \hat{\psi}(h,t) , \Lambda(\hat{\psi}(h,t)) \rangle  = \langle h_{\textnormal{eq}} , \Lambda(h_{\textnormal{eq}})\rangle = 1. 
\end{align*}
This proves that $\Lambda$ satisfies \eqref{innerproductcondition} on $U$. For any $h \in U$ and $0 \leq t \leq t_0$, let $\Phi(h,t,t_0)$ be the matrix defined in 
Lemma \ref{lemma:matrixpsi}. Since $\Lambda$ satisfies \eqref{lemma:eqnforlambda} we must have
\begin{align}
\label{reln:lambdapsi}
\Lambda(\hat{\psi}(h,t)) = \Phi(h,t,t_0)\Lambda(\hat{\psi}(h,t_0)). 
\end{align}

From Proposition \ref{prop:analyticsolution} we know that on some open set $V \subset \R^q$ containing $h_{\textnormal{eq}}$ we can find a solution 
$\bar{\Lambda} \in C^2(V,\R^q)$ that satisfies \eqref{kernelequation} along with \eqref{initialcondition}. Since $v_{\textnormal{eq}} \in \R^q_*$ 
(that is, it is positive component-wise) and $\bar{\Lambda}$ is a continuous function, by shrinking $V$ if necessary, we can ensure that the 
image of $V$ under $\bar{\Lambda}$ lies in $\R^q_*$. Since $V$ is open, there exists a $r \in (0,1)$ such that $B^q_r(h_{\textnormal{eq}}) \subset V$, where 
$B^q_r(h_{\textnormal{eq}})$ is the open ball in $\R^q$ centered at $h_{\textnormal{eq}}$ with radius $r$. As in the proof of Lemma \ref{compact_containment}, we can find a $\hat{\psi}$-invariant open set $W \subset B^q_r(h_{\textnormal{eq}})$ which contains $h_{\textnormal{eq}}$.

For each $n \in \N$ define an open set 
\[O_n = \{ h \in \hat{U}_{\textnormal{eq}}  : \hat{\psi}(h,n) \subset W\}.\]
Each $O_n$ is $\hat{\psi}$-invariant. Furthermore $W \subset O_1 \subset O_2 \dots $ and $\bigcup_{n=1}^{\infty} O_n = \hat{U}_{\textnormal{eq}}$.
Define $\lambda_n(h,t)$ for each $h \in O_n$ and $t \in [0,n)$ by
\begin{align}
\label{deflambdan}
 \lambda_n(h,t) = \Phi(h,t,n) \bar{\Lambda}(\hat{\psi}(h,n)). 
\end{align}
Observe that $\hat{\psi}(h,n) \in W \subset V$ and so $\bar{\Lambda}(\hat{\psi}(h,n))$ is well-defined and also $\bar{\Lambda}(\hat{\psi}(h,n)) \in \R^q_{*}$. 
Part (C) of Lemma \ref{lemma:matrixpsi} shows that $\lambda_n(h,t) \in \R^q_{*}$.
Since $\bar{\Lambda} \in C^2(V,\R^q_*)$, 
$\Phi(\cdot,\cdot,n) \in C^2(\hat{U}_{\textnormal{eq}} \times [0,n], \mathbb{M}_\R(q,q))$ (see Lemma \ref{lemma:matrixpsi}) and 
$\hat{\psi} \in C^2(\R^q \times \R_+, \R^q)$ we must have that $\lambda_n \in C^2(O_n \times [0,n) , \R^q_*)$.
Note that 
$\bar{\Lambda}$ satisfies \eqref{reln:lambdapsi} for all $h \in W$ and so for $0\leq t \leq t_0$
\begin{align}
\label{reln:lambdapsi2}
\bar{\Lambda}(\hat{\psi}(h,t)) = \Phi(h,t,t_0)\bar{\Lambda}(\hat{\psi}(h,t_0)). 
\end{align}
Therefore if $h \in W$, then for any $n \in \N$ and $t \geq 0$ we have
\begin{align}
\label{consistency1}
 \lambda_n(h,t) = \bar{\Lambda}(\hat{\psi}(h,t)).
\end{align}
Using parts (A) and (B) of Lemma \ref{lemma:matrixpsi}, \eqref{reln:lambdapsi2} and the semigroup property of $\hat{\psi}$ (similar to \eqref{semigroup_property}) 
we can also see that for any $h \in O_n$
\begin{align*}
\lambda_n(\hat{\psi}(h,t),0) & = \Phi(\hat{\psi}(h,t),0,n) \bar{\Lambda}\left(\hat{\psi}(\hat{\psi}(h,t),n) \right) \\
				& = \Phi(h,t,n+t) \bar{\Lambda} \left(\hat{\psi}(h,n+t) \right) \\
			        & = \Phi(h,t,n) \Phi(h,n,n+t) \bar{\Lambda}(\hat{\psi}(h,n+t)) \\
				& = \Phi(h,t,n)  \bar{\Lambda}(\hat{\psi}(h,n)) \\
				& = \lambda_n(h,t).
\end{align*}
Let $h \in O_n$ and $m \geq n$. Then $\hat{\psi}(h,n) \in W$. From part (A) of Lemma \ref{lemma:matrixpsi} and \eqref{reln:lambdapsi2} we can deduce that for any 
$t \in [0,n)$ 
\begin{align*}
\lambda_m(h,t) = \Phi(h,t,m) \bar{\Lambda}(\hat{\psi}(h,m))  &= \Phi(h,t,n) \Phi(h,n,m) \bar{\Lambda}( \hat{\psi}( h,m)) 
\\& = \Phi(h,t,n) \bar{\Lambda}(\hat{\psi}(h,n)) \\&= \lambda_n(h,t).
 \end{align*}
Hence if we define the map $\lambda : \hat{U}_{\textnormal{eq}} \times \R_+ \to \R^q_{*}$ by
\[ \lambda(h,t) = \lambda_n(h,t) \textrm{ if } (h,t) \in O_n \times [0,n),\]
then $\lambda$ is a well-defined function in $C^2(\hat{U}_{\textnormal{eq}} \times \R_+ , \R^q_{*})$ which satisfies 
\begin{align}
\label{cond1forlambdaht}
\lambda(h,t)  = \lambda(\hat{\psi}(h,t),0) \textrm{ for all } (h,t) \in \hat{U}_{\textnormal{eq}} \times \R_+.
\end{align}
From \eqref{deflambdan} and the definition of the matrix $\Phi$ we can see that
\begin{align}
\label{cond2forlambdaht}
\frac{d \lambda (h,t)}{ d t} = - \hat{A}^T(\hat{\psi}(h,t))\lambda (h,t). 
\end{align}

Define $\Lambda : \hat{U}_{\textnormal{eq}} \to \R^q_{*}$ by
\begin{align*}
\Lambda(h) = \lambda(h,0). 
\end{align*}
Then this map is in $C^2(\hat{U}_{\textnormal{eq}},\R^q_*)$ and \eqref{cond1forlambdaht} implies that for any $(h,t) \in \hat{U}_{\textnormal{eq}} \times \R_+$
\begin{align*}
\frac{d \lambda (h,t)}{ d t}  = \frac{d \lambda (\hat{\psi}(h,t),0)}{ d t} =  \frac{d }{ d t } \Lambda(\hat{\psi}(h,t)) = 
[J \Lambda (\hat{\psi}(h,t))] \frac{d \hat{\psi}(h,t)}{ d t} = [J \Lambda (\hat{\psi}(h,t))] \hat{\theta}(\hat{\psi}(h,t)). 
\end{align*}
Using \eqref{cond2forlambdaht} we obtain
\begin{align*}
 [J \Lambda (\hat{\psi}(h,t))] \hat{\theta}(\hat{\psi}(h,t)) = - \hat{A}^T(\hat{\psi}(h,t))\lambda (h,t) =  - \hat{A}^T(\hat{\psi}(h,t))\Lambda( \hat{\psi}(h,t)). 
\end{align*}
If we set $t=0$ then we see that $\Lambda$ is a solution to \eqref{kernelequation}.
Since $\bar{\Lambda}$ satisfies \eqref{initialcondition}, equation \eqref{consistency1} implies that $\Lambda$ will also satisfy it. 
We have already shown that such a solution of \eqref{kernelequation} will automatically satisfy \eqref{innerproductcondition} for all $h \in \hat{U}_{\textnormal{eq}}$. 
This completes the proof of the proposition. 
\end{proof}

\subsection{Fleming-Viot convergence}\label{section:fvconvergence}

In this section we will finally prove the main result of our paper, which is Theorem \ref{mainresult}. Let $\{ \mu^N(t): t \geq 0 \}$ be a $\mathcal{M}^q_{N,a}(E)$-valued process with generator $\mathbf{A}^N_l$ for some $l \in \{0,1,2,3\}$. 
As outlined at the end of Section \ref{mainresultsection}, we first \emph{extract} a $\mathcal{P}(E)$-valued process $\{ \nu^N(t): t \geq 0\}$ from the process $\{ \mu^N(t): t \geq 0 \}$.
This step requires a solution $\Lambda$ of \eqref{kernelequation} whose existence was shown in Section \ref{section:pdesolution}. We then show that as $N \to \infty$, we have $\nu^N \Rightarrow \nu$ where $\{\nu(t) : t \geq 0\}$ is an appropriately defined Fleming-Viot process. 
This convergence and Proposition \ref{prop:constancyofh_mainresult} prove Theorem \ref{mainresult}. Before we proceed we need some preliminary results.

For any set $A \subset \R^q_{+}$ define 
\begin{align*}
\mathcal{M}_F^q(E : A ) = \left\{ \mu \in \mathcal{M}_F^q(E) : H(\mu) \in A \right\}. 
\end{align*}
Note that if $A$ is a compact set then the set $\mathcal{M}_F^q(E : A )$ is also compact.

Recall the definition of the set $U_{\textnormal{eq}}$ from \eqref{defueq}. Let $\{F_N : N \in \N \}$ be a sequence of real-valued functions on $\mathcal{M}_F^q(E : U_{\textnormal{eq}} )$. 
We will say that this sequence belongs to class $o(N^{-m})$ for some $m \in \N_0$, if and only if for each compact $K \subset U_{\textnormal{eq}}$ we have
\begin{align*}
\limsup_{N \to \infty} \sup_{\mu \in \mathcal{M}_F^q(E : K)} N^m \left| F_N(\mu)\right| = 0.
\end{align*}
For two such sequences $\{F_N : N \in \N  \}$ and $\{G_N : N \in \N\}$, we say that $F_N(\mu) = G_N(\mu) + o(N^{-m})$ for all 
$\mu \in \mathcal{M}_F^q(E : U_{\textnormal{eq}})$ if and only if the sequence of functions $\{(F_N -G_N) : N \in \N\}$ is in the class $o(N^{-m})$.

From now on let $\Lambda \in C^2(\hat{U}_{\textnormal{eq}},\R^q_{*})$ be a function that satisfies \eqref{kernelequation}, \eqref{innerproductcondition} and \eqref{initialcondition} on 
some open set $\hat{U}_{\textnormal{eq}} \subset \R^q$ containing $U_{\textnormal{eq}}$. Such a function exists by Proposition \ref{prop:extension}. 
Define a continuous map $\Gamma : \mathcal{M}_F^q( E : U_{\textnormal{eq}} ) \to \mathcal{P}(E)$ by
\begin{align}
\label{defnGamma}
\Gamma(\mu) = \nu, 
\end{align}
where the measure $\nu$ is given by
\begin{align}
\label{relationofmuandnu}
\nu(S) = \sum_{i \in Q} \Lambda_i(h) \mu_i(S) \textrm{ for any } S \in \mathcal{B}(E), 
\end{align}
with $h = H(\mu)$ being the density vector corresponding to $\mu$. Note that for each $h \in U_{\textnormal{eq}}$, $\Lambda(h)$ is a
vector which is positive in each component and hence $\nu(S)\geq 0$ for all $ S \in \mathcal{B}(E)$. Since the function $\Lambda$ satisfies \eqref{innerproductcondition} 
we have
\[\nu(E) =  \sum_{i \in Q} \Lambda_i(h) \mu_i(E) =  \sum_{i \in Q} \Lambda_i(h) h_i = 1.\]
This shows that $\nu$ is a probability measure on $E$. 

Let $\Upsilon'$ be the class of functions in $C\left( \mathcal{M}_F^q(E : U_{\textnormal{eq}})\right)$ given by 
\begin{align}
\label{defnupsilon}
\Upsilon' =  & \left\{ F(\mu)  = \left( h_j \langle f, \mu_i \rangle - h_i \langle f, \mu_j \rangle  \right) L(\mu) : (h_1,\dots,h_q) = H(\mu) , \right. 
\\ & \left.  f \in C(E) \textrm{ , }  L \in C\left( \mathcal{M}_F^q(E : U_{\textnormal{eq}})\right) \textrm{ and } i,j \in Q   \right\}. \notag
\end{align}
Let $\Upsilon$ be the smallest algebra of functions in $C\left( \mathcal{M}_F^q(E : U_{\textnormal{eq}})\right)$ containing 
$\Upsilon'$. Observe that if $G \in C\left( \mathcal{M}_F^q(E : U_{\textnormal{eq}})\right)$ and $L \in \Upsilon$, then the product $G L$ is in $\Upsilon$. 
Given two functions 
$G_1,G_2 \in  C\left( \mathcal{M}_F^q(E : U_{\textnormal{eq}})\right)$ we say that $G_2(\mu) = G_1(\mu) + \Upsilon$ for all $\mu \in  \mathcal{M}_F^q( E : U_{\textnormal{eq}} )$ if and only 
if the function $(G_2-G_1)$ is in the class $\Upsilon$.

Let $F \in C(\mathcal{P}(E))$ be a function in the class $\mathcal{C}_0$ defined by \eqref{defc0}. Then $F$ has the form 
\begin{align}
\label{fvconvergence:Fform}
F(\nu) =  \prod_{j = 1}^{m} \langle f_j, \nu \rangle,
\end{align}
where $f_1,\dots,f_m \in \mathcal{D}_0$. Corresponding to $F$, define the functions $F_l, F_{lk} \in \mathcal{C}_0$ for all distinct $l,k \in Q$ by
\begin{align}
\label{fvconvergence:Fformlk}
F_{l}(\nu) = \prod_{j = 1, j \neq l}^{m} \langle f_j, \nu \rangle 
\ \textrm{ and } \ 
F_{lk}(\nu) = \prod_{j = 1, j \neq l,k}^{m} \langle f_j, \nu \rangle.
\end{align}
Using any $F \in \mathcal{C}_0$ we construct a function $\hat{F} \in \mathcal{C}^q_0$ as follows. We first extend the definition of $\Lambda$ 
to the whole of $\R^q$ by letting $\Lambda(h) = \bar{0}_q$ for all $h \notin \hat{U}_{\textnormal{eq}}$. If $F$ has the form \eqref{fvconvergence:Fform} then consider the function 
$\hat{F} : \mathcal{M}^q_F(E) \to \R$ given by 
\begin{align}
\label{fvconvergence:Fhatform}
\hat{F}(\mu) =  \prod_{j = 1}^{m} \left(\sum_{i \in Q} \Lambda_i(h) \langle f_j, \mu_i \rangle \right),
\end{align}
where $h = H(\mu)$. Due to \eqref{innerproductcondition}, the function $\hat{F}$ is in the class $\mathcal{C}^q_0$ defined by \eqref{classc0q}. 
The next result demonstrates how the action of various operators on functions of the form \eqref{fvconvergence:Fhatform} can be approximated.
\begin{proposition}
\label{prop:approximation}
Let $F \in \mathcal{C}_0$ have the form \eqref{fvconvergence:Fform}. Corresponding to $F$ let $\hat{F} \in \mathcal{C}^q_0$ have the form \eqref{fvconvergence:Fhatform} and 
for distinct $l,k \in Q$ let $F_l, F_{lk}$ be given by \eqref{fvconvergence:Fformlk}. Then 
for all $\mu \in \mathcal{M}^q_F(E:U_{\textnormal{eq}})$ with $h = H(\mu)$ and $\nu = \Gamma(\mu)$ we have the following.
\begin{itemize}
\item[(A)] Let $\mathbf{R}^N$ be the operator given by \eqref{defroperatorrn}. Then 
\begin{align}
\label{prop:approxforrn}
N \mathbf{R}^N \hat{F}(\mu) =   
 \sum_{1 \leq l \neq k \leq m} \gamma(h) \left( \langle f_lf_k, \nu \rangle - \langle f_l, \nu \rangle \langle f_k, \nu \rangle  \rangle\right)F_{lk}(\nu)+\Upsilon + o(1),
\end{align}
where 
\begin{align}
\label{defngammah} 
\gamma(h)= \frac{1}{2}\left[  \sum_{ i,j \in Q } \beta_{ij}(h) h_i (\Lambda_j(h))^2 + \sum_{i \in Q} \rho_i(h)h_i (\Lambda_i(h))^2 \right].
\end{align}
\item[(B)]Let $\mathbf{B}^N$ be the operator given by \eqref{maindefbn}. Then 
\begin{align}
\label{prop:approxforbn}
\mathbf{B}^N \hat{F}(\mu) =  \sum_{l = 1}^{m} \left(\sum_{i \in Q } \Lambda_i(h) h_i \langle B_i f_l, \nu \rangle \right) F_l(\nu) + \Upsilon  + o(1).
\end{align}
\item[(C)] Let $\mathbf{G}^N_1$ be the operator given by \eqref{defgn1}. Then
\begin{align}
\label{prop:approxforgn1}
\mathbf{G}^N_1 \hat{F}(\mu) & = 
\sum_{l = 1}^m \left[ \left( \langle  b^{s}(\cdot,h) f_l(\cdot), \nu \rangle - \langle  b^{s}(\cdot,h), \nu \rangle \langle f_l, \nu \rangle \right) \right. \\
& \left. + \left( \langle d^{s}(\cdot,h), \nu \rangle \langle f_l, \nu \rangle -  \langle d^{s}(\cdot,h) f_l(\cdot), \nu \rangle \right) \right] F_l(\nu)  + \Upsilon+ o(1), \notag
\end{align}
where for any $x \in E$ and $h \in \R^q_{+}$
\begin{align}
\label{operatorg1:defn_bs}
b^s(x,h) =  \sum_{i,j \in Q } b^{s}_{i j}(x,h)\Lambda_j(h)h_i \quad \textrm{ and } \quad d^s(x,h) =  \sum_{i \in Q } d^{s}_{i}(x,h)\Lambda_i(h)h_i.
\end{align}
\item[(D)] Let $\mathbf{G}^N_2$ be the operator given by \eqref{defgn2}. Then
\begin{align}
\label{prop:approxforgn2}
\mathbf{G}^N_2 \hat{F}(\mu) & =   \sum_{l = 1}^{m} \left( \sum_{i, j \in  Q} \beta_{ij}(h) \Lambda_j(h)h_i \langle C_{ij} f_l , \nu \rangle \right) F_l(\nu) + \Upsilon + o(1),
\end{align}
where the operators $C_{ij}$ are as in Assumption \ref{assmp:limitrelationshipondispersal}.
\item[(E)] Let $\mathbf{G}^N_3$ be the operator given by \eqref{defgn3}. Then
\begin{align}
\label{prop:approxforgn3}
\mathbf{G}^N_3 \hat{F}(\mu) & =  \sum_{l=1}^m \left( \sum_{i \in Q}  \kappa_i(h) \Lambda_i(h) 
\int_{E} \left( f_l(x) -  \langle f_l,\nu \rangle \right) \Theta_i(dx) \right) F_l(\nu) + \Upsilon+ o(1). 
\end{align}
\end{itemize}
\end{proposition}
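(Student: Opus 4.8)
The plan is to prove all five identities by one mechanism. Write $\hat F(\mu)=\prod_{l=1}^{m}g_l(\mu)$ with $g_l(\mu)=\sum_{i\in Q}\Lambda_i(H(\mu))\langle f_l,\mu_i\rangle=\langle f_l,\Gamma(\mu)\rangle$ (cf.\ \eqref{relationofmuandnu}, \eqref{fvconvergence:Fhatform}), and observe that under the elementary perturbation $\mu\mapsto\mu\pm\frac1N\delta^j_x$ the density $h=H(\mu)$ shifts to $h\pm\frac1N e_j$ while $\mu_j$ shifts by $\pm\frac1N\delta_x$, so $g_l$ depends affinely on $f_l(x)$:
$g_l(\mu\pm\tfrac1N\delta^j_x)=\sum_i\Lambda_i(h\pm\tfrac1N e_j)\langle f_l,\mu_i\rangle\pm\tfrac1N\Lambda_j(h\pm\tfrac1N e_j)f_l(x)$.
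Since $\Lambda\in C^2(\hat U_{\textnormal{eq}},\R^q_{*})$, on any compact $K\subset U_{\textnormal{eq}}$ we have the uniform Taylor expansion $\Lambda_i(h\pm\frac1N e_j)=\Lambda_i(h)\pm\frac1N\partial_j\Lambda_i(h)+\frac1{2N^2}\partial_j^2\Lambda_i(h)+o(N^{-2})$; substituting it and expanding $\prod_l g_l$ yields $\hat F(\mu\pm\frac1N\delta^j_x)-\hat F(\mu)$ as a series in $1/N$ whose coefficients are explicit in the first–order increment $\Delta_{l,j,x}(\mu):=\sum_k\partial_j\Lambda_k(h)\langle f_l,\mu_k\rangle+\Lambda_j(h)f_l(x)$, its second–order analogue, and the products $F_l(\nu),F_{lk}(\nu)$ with $\nu=\Gamma(\mu)$. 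Two facts drive the bookkeeping. \emph{(i) Fusion identity:} for every $g\in C(E)$, $\langle g,\mu_i\rangle=h_i\langle g,\nu\rangle+\Upsilon$, because $\langle g,\mu_i\rangle-h_i\langle g,\nu\rangle=\sum_k\Lambda_k(h)\bigl(h_k\langle g,\mu_i\rangle-h_i\langle g,\mu_k\rangle\bigr)$ by the normalization \eqref{innerproductcondition}, and each summand lies in $\Upsilon'$; together with the facts that $\Upsilon$ is an algebra and absorbs products with $C(\mathcal M^q_F(E:U_{\textnormal{eq}}))$. \emph{(ii) Constraint derivatives:} differentiating \eqref{innerproductcondition} in $h_j$ once and twice gives $\sum_k h_k\partial_j\Lambda_k(h)=-\Lambda_j(h)$ and $\sum_k h_k\partial_j^2\Lambda_k(h)=-2\,\partial_j\Lambda_j(h)$; combined with (i) these give $\Delta_{l,j,x}(\mu)=\Lambda_j(h)\bigl(f_l(x)-\langle f_l,\nu\rangle\bigr)+\Upsilon$.

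\textbf{Part (A).} Here I expand $\mathbf R^N\hat F$ from \eqref{defroperatorrn} to \emph{second} order and multiply by $N$, producing three tiers. The tier of order $N$ contributes, for each $l$, the term $\Bigl(\sum_k\langle f_l,\mu_k\rangle\bigl[(A^T(h)\Lambda(h))_k+([J\Lambda(h)]\theta(h))_k\bigr]\Bigr)F_l(\nu)$: using $A_{ji}=\beta_{ij}$ for $j\neq i$, $A_{ii}=\beta_{ii}-\rho_i$ and $\theta_j(h)=\sum_i\beta_{ij}(h)h_i-\rho_j(h)h_j$ (from \eqref{defninteractionmatrix}--\eqref{defnthetah}) the bracket is exactly the $k$-th component of $A^T(h)\Lambda(h)+[J\Lambda(h)]\theta(h)$, which vanishes by the kernel equation \eqref{kernelequation}. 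The tier of order $1$ splits into single-index terms (coefficient of $F_l(\nu)$) and double-index terms (coefficient of $F_{lk}(\nu)$). Applying (i), the single-index terms carry the factor $\frac12\sum_k h_k\partial_j^2\Lambda_k(h)+\partial_j\Lambda_j(h)$, which is $0$ by the second identity in (ii); so they contribute only $\Upsilon+o(1)$. For the double-index terms I expand $\Delta_{l,j,x}\Delta_{k,j,x}$, integrate against $\beta_{ij}(h)\mu_i(dx)$ (resp.\ $\rho_i(h)\mu_i(dx)$), and repeatedly use (i) together with $\sum_k h_k\partial_j\Lambda_k(h)=-\Lambda_j(h)$; the cross terms collapse, leaving the coefficient $\sum_{i,j}\beta_{ij}(h)h_i\Lambda_j(h)^2+\sum_i\rho_i(h)h_i\Lambda_i(h)^2=2\gamma(h)$ (cf.\ \eqref{defngammah}) multiplying $\bigl(\langle f_lf_k,\nu\rangle-\langle f_l,\nu\rangle\langle f_k,\nu\rangle\bigr)F_{lk}(\nu)$ summed over $l<k$, i.e.\ exactly \eqref{prop:approxforrn}. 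The Taylor remainders are $O(N^{-1})$ uniformly on $K$ since each $\beta_{ij}$ is bounded and $\mu_i(E)=h_i$ is bounded on $K$, hence $o(1)$.

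\textbf{Parts (B)--(E).} These are lighter. For (B), $\mathbf B^N$ (see \eqref{maindefbn}--\eqref{defbni}) is the generator of independent migrating particles and acts on $\hat F=\prod_l g_l$ as the derivation $\sum_l(\mathbf B^N g_l)\prod_{s\neq l}g_s$ up to $O(N^{-1})$ ``carr\'e-du-champ'' corrections (the $k<m$ terms of \eqref{maindefbn}), with $\mathbf B^N g_l(\mu)=\sum_i\Lambda_i(h)\langle B_if_l,\mu_i\rangle$; applying (i) to $\langle B_if_l,\mu_i\rangle$ gives \eqref{prop:approxforbn}. For (C) and (E) only a \emph{first}-order expansion is needed; multiplying by $N$ leaves the relevant rate integrated against $\sum_l F_l(\nu)\Delta_{l,\cdot,x}(\mu)$, whereupon $\Delta_{l,j,x}=\Lambda_j(h)(f_l(x)-\langle f_l,\nu\rangle)+\Upsilon$ and a final application of (i) to $\langle b^s_{ij}(\cdot,h)(f_l-\langle f_l,\nu\rangle),\mu_i\rangle$ (resp.\ $d^s_i$, resp.\ using $\Theta_i$ directly for immigration) produce \eqref{prop:approxforgn1} with $b^s,d^s$ as in \eqref{operatorg1:defn_bs}, and \eqref{prop:approxforgn3}. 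For (D), both perturbations in \eqref{defgn2} \emph{add} a type-$j$ particle, so they induce the \emph{same} density shift and $\hat F(\mu+\frac1N\delta^j_y)-\hat F(\mu+\frac1N\delta^j_x)=\frac1N\Lambda_j(h)\sum_l(f_l(y)-f_l(x))F_l(\nu)+(\text{higher order})$; the higher-order terms involve $\int(g(y)-g(x))\vartheta^N_{ij}(x,dy)$ with $g$ a product of the $f_l$'s, still in the \emph{algebra} $\mathcal D_0$, which is $O(N^{-1})$ by Assumption \ref{assmp:limitrelationshipondispersal}, so after multiplying by $N^2p^N_{ij}(x)$ and integrating these are $o(1)$; passing to the limit via Assumption \ref{assmp:limitrelationshipondispersal} turns the leading term into $\sum_lF_l(\nu)\sum_{i,j}\beta_{ij}(h)\Lambda_j(h)\langle C_{ij}f_l,\mu_i\rangle$, and (i) yields \eqref{prop:approxforgn2}.

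\textbf{Main obstacle.} The bulk of the work is part (A): the second-order expansion generates many terms, and the argument turns on two non-obvious cancellations --- the order-$N$ terms vanishing through the kernel equation \eqref{kernelequation} and the order-$1$ single-index terms vanishing through the second $h$-derivative of the normalization \eqref{innerproductcondition} --- together with the need to keep the error classes ``$+\Upsilon$'' and ``$+o(1)$'' uniform over compact subsets of $U_{\textnormal{eq}}$ (using $\Lambda\in C^2$ for uniform Taylor remainders, boundedness of the $\beta_{ij}$, and $\mu_i(E)=h_i$ bounded on compacts). Once the fusion identity (i) and the first-derivative identity in (ii) are available, parts (B)--(E) are essentially routine.
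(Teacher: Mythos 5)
Your proposal is correct and follows essentially the same route as the paper's proof: second‑order Taylor expansion of $\Lambda$ and of the product $\hat F$, cancellation of the order‑$N$ tier via the kernel equation \eqref{kernelequation}, cancellation of the single‑index order‑$1$ terms via the first and second $h$‑derivatives of the normalization \eqref{innerproductcondition}, and the fusion identity $\langle g,\mu_i\rangle=h_i\langle g,\nu\rangle+\Upsilon$ to reduce everything to functions of $\nu$. The bookkeeping, including the identification of the double‑index coefficient with $2\gamma(h)$ and the treatment of parts (B)--(E) by first‑order expansion, matches the paper's argument.
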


\begin{proof}
For any $j \in Q$, let $e_j$ be the vector in $\R^q$ of the form $e_j = (0,\dots,0,1,0,\dots,0)$ with the $1$ at the $j$-th position.
Since $\Lambda \in C^2(\hat{U}_{\textnormal{eq}}, \R^q_{+})$ and $\hat{U}_{\textnormal{eq}}$ is an open set containing $U_{\textnormal{eq}}$, if $h \in U_{\textnormal{eq}}$, then 
using Taylor's theorem we can write
\begin{align*}
\Lambda_i \left( h \pm \frac{1}{N} e_j \right) = 
\Lambda_i(h) \pm  \frac{1}{N} \frac{\partial \Lambda_i (h)}{\partial h_j}+\frac{1}{2 N^2} \frac{ \partial ^2 \Lambda_i (h)}{ \partial h_j^2} + o(N^{-2})
\end{align*}
for any $i,j \in Q$.
But then for any $\mu \in \mathcal{M}_F(E : U_{\textnormal{eq}})$ and $x \in E$
\begin{align*}
\hat{F}\left( \mu \pm \frac{1}{N} \delta^j_x \right) & =  
\prod_{l = 1}^{m} \left( \sum_{i \in Q} \Lambda_i\left(h \pm \frac{1}{N} e_j\right) \langle f_l, \mu_i \rangle \pm 
\Lambda_j \left( h \pm \frac{1}{N} e_j \right) \frac{f_l(x)}{N}\right) \\
& = \prod_{l = 1}^{m} \left[ \sum_{i \in Q} \Lambda_i(h) \langle f_l, \mu_i \rangle   \pm  \frac{1}{N} \left( \sum_{i \in Q} \frac{\partial \Lambda_i (h)}{\partial h_j} \langle f_l, \mu_i \rangle + \Lambda_j(h)f_l(x)\right)\right. 
\\ & \left. + \frac{1}{N^2} \left( \frac{1}{2}  \sum_{i \in Q} \frac{\partial \Lambda^2_i (h)}{\partial h_j^2} \langle f_l, \mu_i \rangle  + 
\frac{\partial \Lambda_j (h)}{\partial h_j} f_l(x) \right)+ o(N^{-2}) \right] \\
& = \hat{F}(\mu) \pm \frac{1}{N} \sum_{l = 1}^{m} \chi^j_l(\mu,x) F_l(\nu) + \frac{1}{N^2} \sum_{l = 1}^{m} \phi^j_l(\mu,x) F_l(\nu) 
\\& +  \frac{1}{2 N^2} \sum_{1 \leq l  \neq  k \leq m} \chi^j_l(\mu,x)\chi^j_k(\mu,x) F_{lk}(\nu) + o(N^{-2}),   
\end{align*}
where $F_l, F_{lk}$ are as in \eqref{fvconvergence:Fformlk} and $ \chi^j_l(\mu,x) ,  \phi^j_l(\mu,x)$ are given by
\begin{align}
\label{defnchiandphi} 
 \chi^j_l(\mu,x) & = \sum_{i \in Q} \frac{\partial \Lambda_i (h)}{\partial h_j} \langle f_l, \mu_i \rangle + \Lambda_j(h)f_l(x) \notag \\
\textrm{ and } \  \phi^j_l(\mu,x) &= \frac{1}{2}  \sum_{i \in Q} \frac{\partial \Lambda^2_i (h)}{\partial h_j^2} \langle f_l, \mu_i \rangle  + \frac{\partial \Lambda_j (h)}{\partial h_j} f_l(x). 
\end{align}
On rearranging we obtain
\begin{align}
\label{simplifiedtaylorexpansion}
\hat{F}\left( \mu \pm \frac{1}{N} \delta^j_x \right) - \hat{F}(\mu) = 
&\pm \frac{1}{N} \sum_{l = 1}^{m} \chi^j_l(\mu,x) F_l(\nu) + \frac{1}{N^2} \sum_{l = 1}^{m} \phi^j_l(\mu,x) F_l(\nu) \\
& +  \frac{1}{2 N^2} \sum_{1 \leq l \neq k \leq m} \chi^j_l(\mu,x)\chi^j_k(\mu,x) F_{lk}(\nu) + o(N^{-2}).\notag
\end{align}

Therefore for any $\mu \in \mathcal{M}_F(E : U_{\textnormal{eq}})$ 
\begin{align*}
N \mathbf{R}^N \hat{F}(\mu) &= N^2 \sum_{ i,j \in Q } \int_{E} \beta_{ij}(h) \left( \hat{F} \left( \mu +\frac{1}{N} \delta_{x}^j \right) - \hat{F}(\mu) \right)\mu_i(dx) 
\\& +  N^2 \sum_{i \in Q} \int_{E} \rho_{i}(h) \left( \hat{F} \left( \mu -\frac{1}{N} \delta_{x}^{i} \right) - \hat{F}(\mu) \right)\mu_i(dx)\\
&= N \sum_{l=1}^m \left(  \sum_{ i,j \in Q } \beta_{ij}(h)  \langle \chi^j_l(\mu,\cdot ),\mu_i \rangle  -\sum_{i \in Q } \rho_i(h) \langle \chi^i_l(\mu,\cdot),\mu_i \rangle\right) F_{l}(\nu) \\
& + \sum_{l=1}^m \left(  \sum_{ i,j \in Q } \beta_{ij}(h)  \langle \phi^j_l(\mu,\cdot),\mu_i \rangle  
+ \sum_{i \in Q} \rho_i(h) \langle \phi^i_l(\mu,\cdot),\mu_i \rangle\right) F_{l}(\nu) \\
& + \frac{1}{2}  \sum_{1 \leq l \neq k \leq m}
\left(  \sum_{ i,j \in Q } \beta_{ij}(h)\langle  \chi^j_l(\mu,\cdot )\chi^j_k(\mu,\cdot ),\mu_i \rangle  
\right. \\ & \left.  + \sum_{i \in Q} \rho_i(h) \langle  \chi^i_l(\mu,\cdot )\chi^i_l(\mu,\cdot ),\mu_i \rangle\right) F_{lk}(\nu) + o(1). 
\end{align*}
But note that
\begin{align*}
& \sum_{ i,j \in Q } \beta_{ij}(h)  \langle \chi^j_l(\mu,\cdot),\mu_i \rangle  + \sum_{i \in Q} \rho_i(h) \langle \chi^i_l(\mu,\cdot),\mu_i \rangle  \\
& =   \sum_{r  \in Q } \langle  f_l , \mu_r\rangle  \left(  \sum_{j \in Q } \frac{\partial \Lambda_r(h)}{ \partial h_j} 
\left( \sum_{i \in Q }\beta_{ij}(h)h_i - \rho_j(h)h_j\right) + \sum_{j \in Q } \beta_{rj}(h) \Lambda_j(h) - \rho_r(h) \Lambda_r(h)\right) \\
& = \sum_{r  \in Q } \langle  f_l , \mu_r\rangle  \left(  \sum_{j \in Q } \frac{\partial \Lambda_r(h)}{ \partial h_j} \theta_j(h) 
+ \sum_{j \in Q } A_{rj}(h) \Lambda_j(h)\right),
\end{align*}
where the matrix $A(h)$ and the vector $\theta(h)$ are defined by \eqref{defninteractionmatrix} and \eqref{defnthetah}. Since the function $\Lambda$ satisfies 
\eqref{kernelequation}, the expression on the right is just $0$. Hence the formula for $N \mathbf{R}^N \hat{F}(\mu)$ simplifies to
\begin{align}
\label{approximationonNRN}
& N \mathbf{R}^N \hat{F}(\mu) \notag \\
&= \sum_{l=1}^m \left( \sum_{ i,j \in Q } \beta_{ij}(h)  \langle \phi^j_l(\mu,\cdot),\mu_i \rangle  
+ \sum_{i \in Q} \rho_i(h) \langle \phi^i_l(\mu,\cdot),\mu_i \rangle\right) F_{l}(\nu) \\
& + \frac{1}{2}  \sum_{1 \leq l \neq k \leq m}
\left(  \sum_{ i,j \in Q } \beta_{ij}(h)\langle  \chi^j_l(\mu,\cdot )\chi^j_k(\mu,\cdot ),\mu_i \rangle  
+ \sum_{i \in Q} \rho_i(h) \langle  \chi^i_l(\mu,\cdot )\chi^i_l(\mu,\cdot ),\mu_i \rangle\right) F_{lk}(\nu) + o(1). \notag
\end{align}

Equation \eqref{innerproductcondition} says that for all $h \in \hat{U}_{\textnormal{eq}}$ 
\[\sum_{i  \in Q}  h_i \Lambda_i(h) = 1.\]
Pick a $j \in Q$. Differentiating the above equation with respect to $h_j$ we get
\begin{align}
\label{derivativerelation1}
\sum_{i  \in Q }  h_i \frac{\partial \Lambda_i(h)}{\partial h_j} + \Lambda_j(h) = 0
\end{align}
and differentiating again with respect to $h_j$ we obtain
\begin{align}
\label{derivativerelation2}
\sum_{i  \in Q }  h_i \frac{\partial^2 \Lambda_i(h)}{\partial h_j^2} + 2\frac{\partial \Lambda_j(h)}{\partial h_j} = 0.
\end{align}

Recall that for any $\mu \in  \mathcal{M}_F(E : U_{\textnormal{eq}})$, $\nu = \Gamma(\mu)$ is given by \eqref{relationofmuandnu}. Using \eqref{innerproductcondition} 
one can verify that for any 
$f \in C(E)$ and $i \in Q$ 
\begin{align}
\label{mutonurelation1}
\langle f, \mu_i \rangle = h_i \langle f ,\nu \rangle + \sum_{j \in Q}\left( h_j \langle f, \mu_i \rangle - h_i \langle f, \mu_j \rangle  \right) \Lambda_j(h).  
\end{align}
But the second term on the right is a function in the class $\Upsilon$. Hence for all $\mu \in \mathcal{M}_F(E : U_{\textnormal{eq}})$
\begin{align}
\label{mutonurelation}
\langle f, \mu_i \rangle = h_i \langle f ,\nu \rangle  + \Upsilon. 
\end{align}
From the definitions of $\chi^j_l$ and $\phi^j_l$ (see \eqref{defnchiandphi}) it is immediate that for any 
$ j \in Q$, $l \in \{1,\dots,m\}$ and $x \in E$ we have the following relations. 
For all $\mu \in \mathcal{M}_F(E : U_{\textnormal{eq}})$
\begin{align*}
\chi^j_l(\mu,x) & =  \Lambda_j(h)f_l(x) +
\left( \sum_{i \in Q} h_i\frac{\partial \Lambda_i (h)}{\partial h_j} \right) \langle f_l, \nu \rangle  + \Upsilon \\
\textrm{ and } \ \phi^j_l(\mu,x) & = \frac{\partial \Lambda_j (h)}{\partial h_j} f_l(x) + 
\left( \frac{1}{2}  \sum_{i \in Q} h_i\frac{\partial \Lambda^2_i (h)}{\partial h_j^2} \right) \langle f_l, \nu \rangle   + \Upsilon .
\end{align*}
Using \eqref{derivativerelation1} and \eqref{derivativerelation2} we obtain
\begin{align}
\chi^j_l(\mu,x) & =  \Lambda_j(h)\left( f_l(x) - \langle f_l, \nu \rangle \right)+ \Upsilon \label{simpl1chi} \\
\textrm{ and } \ \phi^j_l(\mu,x) & = \frac{\partial \Lambda_j (h)}{\partial h_j} \left( f_l(x) - \langle f_l, \nu \rangle \right)   + \Upsilon .\label{simpl1phi}
\end{align}
Recall that the class $\Upsilon$ is invariant under multiplication by functions in $C(\mathcal{M}_F(E : U_{\textnormal{eq}}))$. It can be checked that 
for any $i, j \in Q$ and $l,k \in \{1,\dots,m\}$
\begin{align*}
\langle \chi^j_l(\mu,\cdot) \chi^j_k(\mu,\cdot) , \mu_i \rangle  = h_i (\Lambda_j(h))^2 
\left( \langle f_l f_k , \nu\rangle - \langle f_l, \nu\rangle \langle f_k, \nu\rangle\right) + \Upsilon
\end{align*}
and the function $\mu \mapsto  \langle \phi^j_l(\mu,\cdot), \mu_i \rangle$ belongs to class $\Upsilon$. Substituting these two relations in \eqref{approximationonNRN} 
proves part (A) of this proposition. 
 
Recall the definition of the operator $\mathbf{B}^n_i$ from Section \ref{migrationmechanism}. 
If $G(\nu) = \prod_{j = 1}^l \langle g_j,\nu \rangle \in \mathcal{C}_0$ then one can verify (see Section 2.2 in \cite{DawsonEcole}) 
that there is a constant $c$ (depending on $l$ and $g_1,\dots, g_l$) such that
\begin{align*}
\sup_{n \in \N} \sup_{\nu \in \mathcal{P}_{n,a}} \left( n
\left|\mathbf{B}^n_i G(\nu) - \sum_{j = 1}^{l} \langle B_i g_j,\nu \rangle \prod_{k = 1, k  \neq j}^{l} \langle g_k , \nu \rangle \right| \right) \leq c. 
\end{align*}
From the definition of the operator $\mathbf{B}^N$ and \eqref{mutonurelation} it is immediate that 
\begin{align*}
\mathbf{B}^N \hat{F}(\mu) &= \sum_{l = 1}^{m}  \left(\sum_{i \in Q } \Lambda_i(h) \langle B_i f_l, \mu_i \rangle \right) F_l(\nu) + o(1) \\
& =   \sum_{l = 1}^{m} \left(\sum_{i \in Q } \Lambda_i(h) h_i \langle B_i f_l, \nu \rangle \right) F_l(\nu) + \Upsilon  + o(1).
\end{align*}
This proves part (B) of the proposition.

Using \eqref{simplifiedtaylorexpansion}, \eqref{simpl1chi} and \eqref{mutonurelation} we get
\begin{align*}
\mathbf{G}^N_1 \hat{F}(\mu) &=  N \left( \sum_{i, j  \in Q} \int_{E}  b^{s}_{i j}(x,h)\left( \hat{F}\left( \mu + \frac{1}{N}\delta^j_{x}  \right) - \hat{F}(\mu)\right)  \mu_i(dx)  \right. \\& \left. + 
\sum_{i \in Q } \int_{E } d^{s}_{i}(x,h) \left( \hat{F}\left( \mu - \frac{1}{N}\delta^i_{x}  \right) - \hat{F}(\mu)\right)   \mu_i(dx)\right) \\
& = \sum_{l = 1}^m \left( \sum_{i, j \in Q } \left\langle  b^{s}_{i j}(\cdot,h) \chi^j_l(\mu, \cdot), \mu_i \right\rangle 
\right. \\ & \left.  
- \sum_{i \in Q } \left\langle d^{s}_{i}(\cdot,h)\chi^i_l(\mu, \cdot),\mu_i \right\rangle \right) F_l(\nu)  + \Upsilon+ o(1) \\
& = \sum_{l = 1}^m \left( \sum_{i, j \in Q } \Lambda_j(h) \left\langle  b^{s}_{i j}(\cdot,h) \left(f_l(\cdot) - \langle f_l, \nu \rangle \right), \mu_i \right\rangle \right. \\ & \left.  
- \sum_{i \in Q } \Lambda_i(h) \left\langle d^{s}_{i}(\cdot,h)\left(f_l(\cdot)  - \langle f_l, \nu \rangle \right),\mu_i \right\rangle \right) F_l(\nu) + \Upsilon+ o(1) \\
& = \sum_{l = 1}^m \left( \sum_{i, j \in Q } \Lambda_j(h) h_i\left\langle  b^{s}_{i j}(\cdot,h) \left(f_l(\cdot) - \langle f_l, \nu \rangle \right), \nu \right\rangle \right. \\ & \left.  
- \sum_{i \in Q } \Lambda_i(h)h_i \left\langle d^{s}_{i}(\cdot,h)\left(f_l(\cdot) - \langle f_l, \nu \rangle \right),\nu \right\rangle \right) F_l(\nu)  + \Upsilon+ o(1) \\
& = \sum_{l = 1}^m \left( \left( \langle  b^{s}(\cdot,h) f_l(\cdot), \nu \rangle - \langle  b^{s}(\cdot,h), \nu \rangle \langle f_l, \nu \rangle \right)
\right. \\ & \left. 
+ \left( \langle d^{s}(\cdot,h), \nu \rangle \langle f_l, \nu \rangle  -  \langle d^{s}(\cdot,h) f_l(\cdot), \nu \rangle \right) \right) F_l(\nu)  + \Upsilon+ o(1), 
\end{align*}
where the functions $b^s$ and $d^s$ are defined in \eqref{operatorg1:defn_bs}. This proves part (C).

Observe that Assumption \ref{assmp:limitrelationshipondispersal} implies that for any $f \in \mathcal{D}_0$ and $i,j \in Q$
\begin{align*}
 N \int_{E} p^N_{ij}(x) \int_{E} \left( f(y) - f(x) \right) \vartheta^N_{ij}(x,dy)\mu_i(dx) = \langle C_{ij} f, \mu_i \rangle + o(1) 
\end{align*}
and
\begin{align*}
\int_{E} p^N_{ij}(x) \int_{E} \left( f(y) - f(x) \right) \vartheta^N_{ij}(x,dy)\mu_i(dx) = o(1). 
\end{align*}
Using \eqref{simplifiedtaylorexpansion} , \eqref{defnchiandphi} and \eqref{mutonurelation} we obtain
\begin{align*}
&\mathbf{G}^N_2 \hat{F}(\mu) \\& =  N^2 \sum_{i, j \in Q } \beta_{ij}(h)  \int_{E }  p^N_{ij}(x) 
\left( \int_{E} \left( \hat{F} \left( \mu +\frac{1}{N} \delta^j_{y} \right) - \hat{F} \left( \mu +\frac{1}{N} \delta^j_{x} \right) \right) \vartheta^N_{ij}(x,dy) 
\right)\mu_i(dx)\\
& =  \sum_{l=1}^m \left( N \sum_{i, j \in Q } \beta_{ij}(h) \int_{E }  p^N_{ij}(x) 
\int_{E} \left( \chi^j_l(\mu,y) -  \chi^j_l(\mu,x) \right) \vartheta^N_{ij}(x,dy)  \mu_i(dx) \right) F_l(\nu) + o(1)\\
& =  \sum_{l=1}^m \left(  N \sum_{i, j \in Q } \beta_{ij}(h) \Lambda_j(h) \int_{E }  p^N_{ij}(x) 
\left[ \int_{E} \left( f_l(y)-  f_l(x) \right) \vartheta^N_{ij}(x,dy) \right] \mu_i(dx) \right) F_l(\nu) + o(1)\\
& = \sum_{l = 1}^{m} \left( \sum_{i, j \in  Q} \beta_{ij}(h) \Lambda_j(h) \langle C_{ij} f_l , \mu_i \rangle \right) F_l(\nu) + o(1) \\
& = \sum_{l = 1}^{m} \left( \sum_{i, j \in  Q} \beta_{ij}(h) \Lambda_j(h)h_i \langle C_{ij} f_l , \nu \rangle \right) F_l(\nu) + \Upsilon + o(1).
\end{align*}
This proves part (D).

Again using \eqref{simplifiedtaylorexpansion}, \eqref{simpl1chi} and \eqref{mutonurelation} we see that
\begin{align*}
\mathbf{G}^N_3 \hat{F}(\mu) & =   N \sum_{i=1}^{q}  \kappa_i(h) \int_{E} \left( \hat{F} \left( \mu +\frac{1}{N} \delta^i_{x} \right) - \hat{F}(\mu)  \right) \Theta_i(dx) \\
& = \sum_{l=1}^m \left( \sum_{i \in Q}  \kappa_i(h) \int_{E} \chi^i_l(\mu,x) \Theta_i(dx) \right) F_l(\nu) + o(1) \\
& = \sum_{l=1}^m \left( \sum_{i \in Q}  \kappa_i(h) \Lambda_i(h) \int_{E} \left( f_l(x) -  \langle f_l,\nu \rangle \right) \Theta_i(dx) \right) F_l(\nu) + \Upsilon+ o(1). 
\end{align*}
This proves part (E) and completes the proof of this proposition.
\end{proof}

\begin{remark}\label{remarkapproximation}
For any $(h,\nu) \in U_{\textnormal{eq}} \times \mathcal{P}(E)$ let $\zeta^R_F(h,\nu),\zeta^B_F(h,\nu),\zeta^{G,1}_F(h,\nu),\zeta^{G,2}_F(h,\nu)$ and $\zeta^{G,3}_F(h,\nu)$ 
be the first terms that appear on the right hand side of equations \eqref{prop:approxforrn}, \eqref{prop:approxforbn}, \eqref{prop:approxforgn1}, 
\eqref{prop:approxforgn2} and \eqref{prop:approxforgn3} respectively. Let $\zeta^{G,0}_F(h,\nu) = 0$ for all $(h,\nu) \in U_{\textnormal{eq}} \times \mathcal{P}(E)$.
Recall the definitions of the operators $\mathbf{A}_l$ for $l \in \{0,1,2,3\}$ from \eqref{main:gena0}, 
\eqref{main:gena1}, \eqref{main:gena2} and \eqref{main:gena3}. One can verify that for each $l \in \{0,1,2,3\}$, $F \in \mathcal{C}_0$ and $\nu \in \mathcal{P}(E)$
\begin{align}
\label{limitingspolittingofa}
\mathbf{A}_l F(\nu) = \zeta^B_F(h_{\textnormal{eq}},\nu) + \zeta^R_F(h_{\textnormal{eq}},\nu) + \zeta^{G,l}_F(h_{\textnormal{eq}},\nu).  
\end{align}
To check this relation observe that $\Lambda$ satisfies \eqref{initialcondition}. Furthermore for each $i \in Q$, 
$\theta_i(h_{\textnormal{eq}}) = \sum_{j \in Q} \beta_{ji}(h_{\textnormal{eq}}) h_{\textnormal{eq},j} - \rho_i(h_{\textnormal{eq}}) h_{\textnormal{eq},i} = 0$, which shows that the value of the function $\gamma$ 
(given by \eqref{defngammah}) at $h_{\textnormal{eq}}$ is equal to the constant $\gamma_{\textrm{smpl}}$ (given by \eqref{defnresamplingrate}).
\end{remark}
We now prove the main theorem of the paper.\\
\begin{proof}[Proof of Theorem \ref{mainresult}] 
Fix a $l \in \{0,1,2,3\}$ and let $\{ \mu^N(t) : t \geq 0 \}$ be a solution to the martingale problem for $\mathbf{A}^N_l$. Let 
$\{ h^N(t) = H \left( \mu^N(t)\right) : t \geq 0 \}$ be the 
corresponding density process. Since $\mu^N(0) \Rightarrow \mu(0)$ as $N \to \infty$ and $H(\mu(0)) \in U_{\textnormal{eq}}$ a.s.\ we must have that $h^N(0) \Rightarrow h(0)$ and 
$h(0) \in U_{\textnormal{eq}}$ a.s. It suffices to prove the theorem under the assumption that for all $N \in \N$, $h^N(0) \in K_0$ a.s.\ for some compact $K_0 \subset U_{\textnormal{eq}}$. 
 
By Lemma \ref{compact_containment}, we can find a bigger compact set $K \subset U_{\textnormal{eq}}$ containing $K_0$ such that if we define the stopping time 
$\sigma^N $ by
\begin{align*}
\sigma^N = \inf \left\{ t \geq 0 : h^N(t) \notin K \right\} 
\end{align*}
then $\sigma^N \Rightarrow \infty$ as $N \to \infty$. 

Let $F \in \mathcal{C}_0$ be a function of the form \eqref{fvconvergence:Fform} and let $\hat{F} \in \mathcal{C}^q_0$ have the form \eqref{fvconvergence:Fhatform}.
From \eqref{splittingofanl} and Proposition \ref{prop:approximation} we can conclude that for all $\mu \in \mathcal{M}_F(E : U_{\textnormal{eq}})$
\begin{align}
\label{fv:splittingofanl}
\mathbf{A}^N_l \hat{F}(\mu) = \zeta^B_F(h,\nu)+\zeta^R_F(h,\nu)+\zeta^{G,l}_F(h,\nu) + \Upsilon + o(1), 
\end{align}
where $h = H(\mu)$, $\nu \in \Gamma(\mu)$ and the continuous functions $\zeta^B_F,\zeta^R_F,\zeta^{G,0}_F,\zeta^{G,1}_F,\zeta^{G,2}_F,\zeta^{G,3}_F$ from 
$U_{\textnormal{eq}} \times \mathcal{P}(E)$ to $\R$ are defined in Remark \ref{remarkapproximation}.

The relation \eqref{fv:splittingofanl} implies that
\begin{align}
\label{tightness1}
\sup_{N \in \N} \sup_{\mu \in \mathcal{M}_F(E: K)} | \mathbf{A}^N_l \hat{F}(\mu) | < \infty. 
\end{align}
The function $\hat{F}$ belongs to the domain of $\mathbf{A}^N_l$ and the process $\{\mu^N(t) : t \geq 0\}$ is a solution to the martingale problem for $\mathbf{A}^N_l$. Hence 
\begin{align*}
\hat{F}(\mu^N(t)) - \hat{F}^N(\mu^N(0)) - \int_{0}^{t} \mathbf{A}^N_l \hat{F}(\mu^N(s))ds   
\end{align*}
is a martingale and using the optional sampling theorem we see that
\begin{align*}
m^N_F(t) = \hat{F}(\mu^N(t \wedge \sigma^N)) - \hat{F}^N(\mu^N(0)) - \int_{0}^{t \wedge \sigma^N} \mathbf{A}^N_l \hat{F}(\mu^N(s))ds   
\end{align*}
is also a martingale. Note that for all $t \geq 0$, $\mu^N(t \wedge \sigma^N)$ is in the set $\mathcal{M}_F(E : U_{\textnormal{eq}})$. Define a $\mathcal{P}(E)$-valued process by
\begin{align}
\label{fv:defnun}
 \nu^N(t) = \Gamma \left( \mu^N(t \wedge \sigma^N) \right) \textrm{ for } t \geq 0.
\end{align}
The form of the functions $F$ and $\hat{F}$ shows that $\hat{F}(\mu^N(t \wedge \sigma^N)) = F(\nu^N(t))$ for any $t \geq 0$. Hence the martingale $m^N_F$ can be rewritten as
\begin{align}
\label{fv:martn1}
m^N_F(t) = F(\nu^N(t))- F(\nu^N(0)) - \int_{0}^{t \wedge \sigma^N} \mathbf{A}^N_l \hat{F}(\mu^N(s))ds.   
\end{align}
The linear span of functions in the class $\mathcal{C}_0$ is a dense sub-algebra of $C\left( \mathcal{P}(E)\right)$ and for every $F \in \mathcal{C}_0$ we have the martingale 
relation \eqref{fv:martn1}. Theorems 3.9.1 and 3.9.4 in Ethier and Kurtz \cite{EK} along with the estimate \eqref{tightness1} imply that the sequence of 
processes $\{ \nu^N : N \in \N\}$ is tight in the space $D_{\mathcal{P}(E)}[0,\infty)$.

Let $t_N$ be a sequence satisfying the conditions of Theorem \ref{mainresult}. Pick a $T>0$. It is easy to see that 
\begin{align}
\label{fv:approximationforanl}
\sup_{t \in [0,T]} \left| \int_{t_N}^{t \wedge \sigma^N+t_N} \mathbf{A}^N_l \hat{F}(\mu^N(s))ds - \int_{0}^{t \wedge \sigma^N} \mathbf{A}^N_l \hat{F}(\mu^N(s))ds \right| \Rightarrow 0 \textrm{ as } N \to \infty. 
\end{align}
From parts (A) and (B) of Proposition \ref{prop:constancyofh_mainresult} we know that as $N \to \infty$
\begin{align}
\label{fv:constancyofh}
\sup_{t \in [0,T]} \left\| h^N(t+t_N) - h_{\textnormal{eq}}\right\|_1 \Rightarrow 0  
\end{align}
and for any $f \in C(E)$ and $i,j \in Q$
\begin{align}
\label{fv:inseparability1}
\sup_{t \in [0,T] } \left| h^N_j(t+t_N) \langle f,\mu^N_i(t+t_N) \rangle -  h^N_i(t+t_N)\langle f,\mu^N_j(t+t_N) \rangle \right| \Rightarrow 0.
\end{align}
Note that this also implies that if $L$ is a function in the class $\Upsilon$ then  
\begin{align}
\label{fv:inseparability2}
\sup_{t \in [0,T] } \left| L(\mu^N(t+t_N))  \right| \Rightarrow 0 \textrm{ as } N \to \infty.
\end{align}

We argued before that the sequence of processes $\{\nu^N : N \in \N\}$ is tight. Let $\{ \nu(t) : t \geq 0 \}$ be a limit point. Then along some sequence $k_N$, $\nu^N \Rightarrow \nu$ as $N \to \infty$. 
Since $\sigma^N \Rightarrow \infty$, using the continuous mapping theorem and \eqref{fv:constancyofh} we obtain that along the subsequence $k_N$
\begin{align}
\label{fv:convergenceofchi}
\sup_{t \in [0,T]} \left| \int_{t_N}^{t\wedge \sigma^N+ t_N} \zeta(h^N(s),\nu^N(s))ds - \int_{0}^{t} \zeta(h_{\textnormal{eq}},\nu(s))ds \right| \Rightarrow 0 \textrm{ as } N \to \infty,  
\end{align}
where $\zeta$ is any of the continuous functions $\zeta^B_F,\zeta^R_F,\zeta^{G,0}_F,\zeta^{G,1}_F,\zeta^{G,2}_F,\zeta^{G,3}_F$ defined in Remark \ref{remarkapproximation}. 
From \eqref{fv:splittingofanl}, \eqref{fv:approximationforanl}, \eqref{fv:inseparability2}, \eqref{fv:convergenceofchi} and \eqref{limitingspolittingofa} we get that along the subsequence $k_N$
\begin{align}
\label{fv:limitforanl}
\sup_{t \in [0,T]} \left| \int_{0}^{t \wedge \sigma^N} \mathbf{A}^N_l \hat{F}(\mu^N(s))ds - 
\int_{0}^{t} \mathbf{A}_l F(\nu(s)) ds \right| \Rightarrow 0 \textrm{ as } N \to \infty. 
\end{align}
Using \eqref{fv:limitforanl} and the continuous mapping theorem we can conclude that for any $F \in \mathcal{C}_0$, as $N \to \infty$,
the sequence of martingales $m^N_F$ (given by \eqref{fv:martn1}) converges in distribution along the subsequence $k_N$ to the martingale given by
\begin{align}
 F(\nu(t))- F(\nu(0)) - \int_{0}^{t } \mathbf{A}_l F(\nu(s))ds.   
\end{align}
This shows that $\{ \nu(t) : t \geq 0 \}$ is a solution to the martingale problem for $\mathbf{A}_l$.
Let $\pi \in \mathcal{P}(\mathcal{P}(E))$ be the distribution of $\Gamma(\mu(0))$. 
Since $\mu^N(0) \Rightarrow \mu(0)$ as $N \to \infty$ and $\Gamma$ is a continuous map we must also 
have that $\nu^N(0) \Rightarrow \nu(0)$, where $\nu(0)$ has distribution $\pi$.  
We argued in Section \ref{mainresultsection} that the martingale problem for each $\mathbf{A}_l$ is well-posed. Hence $\{ \nu(t) : t \geq 0\}$ is the unique solution to the martingale problem for $(\mathbf{A}_l,\pi)$ and thus $\nu^N \Rightarrow \nu$ as $N \to \infty$, along the entire sequence. Moreover the limiting process has sample paths in $C_{\mathcal{P}(E)}[0,\infty)$ almost surely.

Let $\{ \hat{\mu}^N(t) : t \geq 0\}$ be the process defined by \eqref{defnhatmun}. Pick a $i \in Q$ and $f \in C(E)$. From \eqref{mutonurelation1} for any $0 \leq t < \sigma^N-t_N$ we can write
\begin{align*}
\left\langle f , \hat{\mu}^N_i(t)\right\rangle &=
\left\langle f , \mu^N_i(t+t_N)\right\rangle \\
&= h^N_i(t+t_N) \langle f , \nu^N(t+t_N) \rangle  \\
& +  \sum_{j \in Q}\left( h^N_j(t+t_N) \langle f, \mu^N_i(t+t_N) \rangle - h^N_i(t+t_N) \langle f, \mu^N_j(t+t_N) \rangle  \right) \Lambda_j(h^N(t+t_N)). 
\end{align*}
Since $\nu^N \Rightarrow \nu$, $\sigma^N \Rightarrow \infty$ and $t_N \to 0$, \eqref{fv:constancyofh} , \eqref{fv:inseparability1} and the continuity of the sample paths of $\{ \nu(t) : t \geq 0\}$ imply that for any $T > 0$
\[ \sup_{t \in [0,T]} \left| \left\langle f , \hat{\mu}^N_i(t)\right\rangle - h_{\textnormal{eq},i} \langle f , \nu(t) \rangle\right| \Rightarrow 0 \textrm{ as } N \to \infty.\] 
This holds for any $f \in C(E)$ and $i \in Q$. Hence by Theorem 3.7.1 in Dawson\cite{DawsonEcole}, $\hat{\mu}^N \Rightarrow h_{\textnormal{eq}} \nu$ as $N \to \infty$ 
in the Skorohod topology on $D_{\mathcal{M}^q_F(E)}[0,\infty)$. This completes the proof of Theorem \ref{mainresult}.
\end{proof}

\begin{remark}
\label{remark:initialdistribution}
In the statement of Theorem \ref{mainresult} we did not specify how the initial distribution $\pi$ of the limiting Fleming-Viot process $\{ \nu(t) : t \geq 0\}$ is related to the distribution of $\mu(0)$.
However the above proof makes it clear that $\pi$ is the distribution of $\Gamma(\mu(0))$ where $\Gamma$ is the map defined by \eqref{defnGamma}. 
\end{remark}

\renewcommand {\theequation}{A.\arabic{equation}}
\appendix
\setcounter{equation}{0}

\section{Appendix.}

\begin{lemma}
\label{lemma:appendix1}
For each $h \in \R^q_{+}$ let $A(h) \in \mathbb{M}(q,q)$ be the matrix given by \eqref{defninteractionmatrix}. Suppose that Assumption \ref{mainassumptionsonr} is 
satisfied and let $h_{\textnormal{eq}} \in \R^q_{+}$ be the nonzero vector such that
\begin{align}
\label{eigenvalue0}
A(h_{\textnormal{eq}})h_{\textnormal{eq}} = \bar{0}_q. 
\end{align}
Then we have the following. 
\begin{itemize}
\item[(A)] The vector $h_{\textnormal{eq}}$ is in $\R^q_{*}$, that is, all its components are strictly positive. 
\item[(B)] The matrix $A(h_{\textnormal{eq}})$ has $0$ as an eigenvalue with algebraic multiplicity $1$. All the other eigenvalues of $A(h_{\textnormal{eq}})$ have strictly negative real parts.  
\item[(C)] There exists a unique vector $v_{\textnormal{eq}} \in \R^q_{*}$ such that $v_{\textnormal{eq}} A(h_{\textnormal{eq}}) = \bar{0}_q$ and $\langle v_{\textnormal{eq}}, h_{\textnormal{eq}} \rangle = 1$.
\item[(D)] Let $G(h_{\textnormal{eq}}) \in \mathbb{M}(q,q)$ be the matrix given by
\begin{align*}
G(h_{\textnormal{eq}})  =   \left( I_q - \frac{h_{\textnormal{eq}}}{\langle \bar{1}_q , h_{\textnormal{eq}} \rangle } \bar{1}^T_q \right) A(h_{\textnormal{eq}}). 
\end{align*}
Then the matrix $G(h_{\textnormal{eq}})$ has the same eigenvalues as the matrix $A(h_{\textnormal{eq}})$.
\item[(E)] Let $\bar{G}(h_{\textnormal{eq}}) \in \mathbb{M}(q-1,q-1)$ be the matrix defined by
\[\bar{G}_{ij}(h_{\textnormal{eq}}) = G_{ij}(h_{\textnormal{eq}}) - G_{iq}(h_{\textnormal{eq}})\textrm{ for all }i,j \in \{1,\dots,q-1\}.\]
Then the matrix $\bar{G}(h_{\textnormal{eq}})$ is stable, that is, all its eigenvalues have strictly negative real parts. 
\end{itemize}
\end{lemma}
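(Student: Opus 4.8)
The plan is to handle the five parts in order, using (B) to derive (C), (D) and (E), with Perron--Frobenius theory as the main tool. Throughout write $A := A(h_{\textnormal{eq}})$; by \eqref{defninteractionmatrix} its off-diagonal entries $A_{ij} = \beta_{ji}(h_{\textnormal{eq}}) \ge 0$, so $A$ is essentially nonnegative (Metzler), and for $c > \max_i \rho_i(h_{\textnormal{eq}})$ the matrix $M := A + cI_q$ is nonnegative with the same off-diagonal zero-pattern as $A$, hence irreducible by part (C) of Assumption \ref{mainassumptionsonr}; moreover $Mh_{\textnormal{eq}} = c\,h_{\textnormal{eq}}$. For (A) I argue by contradiction: if $Z := \{i \in Q : h_{\textnormal{eq},i} = 0\}$ were nonempty then, since $h_{\textnormal{eq}} \neq \bar{0}_q$, it would be a proper nonempty subset of $Q$, and for $i \in Z$ the identity $0 = \theta_i(h_{\textnormal{eq}}) = \sum_{j \neq i}\beta_{ji}(h_{\textnormal{eq}})h_{\textnormal{eq},j}$ (a sum of nonnegative terms) forces $A_{ij} = \beta_{ji}(h_{\textnormal{eq}}) = 0$ for all $j \notin Z$, which makes $A$ block upper-triangular after reordering $Q$ with $Q\setminus Z$ first --- contradicting irreducibility. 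Hence $h_{\textnormal{eq}} \in \R^q_*$.

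For (B): since $h_{\textnormal{eq}}$ is a strictly positive eigenvector of the irreducible nonnegative matrix $M$, Perron--Frobenius forces its eigenvalue $c$ to be the Perron root of $M$; that root is algebraically simple, is the only eigenvalue of $M$ admitting a positive eigenvector, and strictly dominates the real parts of all other eigenvalues. Subtracting $c$ gives the assertion for $A$. For (C): $A^T$ has the same eigenvalues and multiplicities as $A$, so $\ker A^T$ is one-dimensional; applying Perron--Frobenius to $M^T = A^T + cI_q$ provides a strictly positive vector $w$ with $wA = \bar{0}_q$, and since $h_{\textnormal{eq}} \in \R^q_*$ we have $\langle w, h_{\textnormal{eq}}\rangle > 0$, so $v_{\textnormal{eq}} := w/\langle w, h_{\textnormal{eq}}\rangle$ works; uniqueness follows from one-dimensionality of $\ker A^T$ together with the normalization $\langle v_{\textnormal{eq}}, h_{\textnormal{eq}}\rangle = 1$.

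For (D): since $\theta(h_{\textnormal{eq}}) = \bar{0}_q$, the matrix in the statement is $G := G(h_{\textnormal{eq}}) = A - h_{\textnormal{eq}}u^T$ with $u^T = \langle\bar{1}_q, h_{\textnormal{eq}}\rangle^{-1}\bar{1}^T_q A$, a rank-one perturbation of $A$. Because $Ah_{\textnormal{eq}} = \bar{0}_q$ one has $(\lambda I_q - A)^{-1}h_{\textnormal{eq}} = \lambda^{-1}h_{\textnormal{eq}}$ for $\lambda \neq 0$ outside the spectrum of $A$, and $u^Th_{\textnormal{eq}} = \langle\bar{1}_q, h_{\textnormal{eq}}\rangle^{-1}\bar{1}^T_q Ah_{\textnormal{eq}} = 0$; the matrix determinant lemma then gives $\det(\lambda I_q - G) = \det(\lambda I_q - A)(1 + \lambda^{-1}u^Th_{\textnormal{eq}}) = \det(\lambda I_q - A)$ for all but finitely many $\lambda$, hence identically, so $G$ and $A$ share their characteristic polynomial. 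For (E): writing $G = PA$ with $P := I_q - \langle\bar{1}_q, h_{\textnormal{eq}}\rangle^{-1}h_{\textnormal{eq}}\bar{1}^T_q$, the projection of $\R^q$ onto $W := \{y : \langle\bar{1}_q, y\rangle = 0\}$ along $\mathrm{span}(h_{\textnormal{eq}})$, we get $G(\R^q) \subseteq W$ and $Gh_{\textnormal{eq}} = \bar{0}_q$, so relative to $\R^q = W \oplus \mathrm{span}(h_{\textnormal{eq}})$ the matrix $G$ is block-diagonal with blocks $G|_W$ and $0$. A direct computation in the coordinates $(y_1,\dots,y_{q-1}) \mapsto (y_1,\dots,y_{q-1}, -\sum_i y_i)$ on $W$ identifies $\bar{G}(h_{\textnormal{eq}})$ with the matrix of $G|_W$, whence $\chi_A(\lambda) = \chi_G(\lambda) = \lambda\,\chi_{\bar{G}(h_{\textnormal{eq}})}(\lambda)$; by (B) the lone factor $\lambda$ already carries the full algebraic multiplicity of the eigenvalue $0$ of $A$, so $\chi_{\bar{G}(h_{\textnormal{eq}})}$ has no root at $0$ and all of its roots have strictly negative real part, i.e.\ $\bar{G}(h_{\textnormal{eq}})$ is stable.

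The only delicate point is the invocation of Perron--Frobenius for the essentially nonnegative matrix $A$ in (B) and (C): one needs the complete package --- simplicity of the Perron root, its uniqueness as the eigenvalue admitting a strictly positive eigenvector, and the fact that for an irreducible nonnegative matrix every eigenvalue other than the Perron root $r$ has real part strictly less than $r$ (the textbook statement is usually phrased in terms of moduli, but the peripheral eigenvalues $r\zeta$ with $\zeta \neq 1$ a root of unity still satisfy $\mathrm{Re}(r\zeta) < r$). Once (B) is in hand, parts (D) and (E) are routine linear-algebra bookkeeping.
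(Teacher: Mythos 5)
Your proof is correct. For parts (A)--(C) you are on essentially the same track as the paper, which simply cites Theorem 2.6 of Seneta on irreducible ML (Metzler--Leontief) matrices; your contribution is to unpack that citation, and in particular your direct argument for (A) -- that a zero set $Z$ of coordinates of $h_{\textnormal{eq}}$ would force the block $A_{Z,(Q\setminus Z)}$ to vanish and hence contradict irreducibility -- is a clean self-contained substitute for the reference, as is the reduction of (B) and (C) to the classical Perron--Frobenius theorem via the shift $M = A + cI_q$ (and you are right to flag, and correctly resolve, the modulus-versus-real-part issue for peripheral eigenvalues). Where you genuinely diverge is in (D) and (E). The paper proves (D) by exhibiting an explicit similarity: with $L = \mathrm{Diag}(h_{\textnormal{eq}})$ and a unipotent $P$, the conjugating matrix $Q = P^{-1}L^{-1}$ satisfies $Qh_{\textnormal{eq}} = \bar{0}_q$, whence $QG(h_{\textnormal{eq}})Q^{-1} = QA(h_{\textnormal{eq}})Q^{-1}$; you instead apply the matrix determinant lemma to the rank-one update $G = A - h_{\textnormal{eq}}u^{T}$ and use $u^{T}h_{\textnormal{eq}} = \langle\bar{1}_q,h_{\textnormal{eq}}\rangle^{-1}\bar{1}_q^{T}Ah_{\textnormal{eq}} = 0$ to conclude $\chi_G = \chi_A$. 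The paper's route yields actual similarity of $G$ and $A$, which is a slightly stronger statement; yours yields equality of characteristic polynomials, which is all that is needed for (E) since algebraic multiplicities are preserved. For (E) the two arguments are morally the same -- the paper conjugates by $P^{T}$ to obtain a block-triangular form $\bigl(\begin{smallmatrix}\bar{G}(h_{\textnormal{eq}}) & v\\ \bar{0}_{q-1}^{T} & 0\end{smallmatrix}\bigr)$, while you decompose $\R^q = W \oplus \mathrm{span}(h_{\textnormal{eq}})$ with $W = \ker \bar{1}_q^{T}$ and identify $\bar{G}(h_{\textnormal{eq}})$ with the matrix of $G|_W$ in the basis $e_j - e_q$ -- and both correctly conclude $\chi_G(\lambda) = \lambda\,\chi_{\bar{G}(h_{\textnormal{eq}})}(\lambda)$, so that the simplicity of the zero eigenvalue from (B) forces $\bar{G}(h_{\textnormal{eq}})$ to be stable.
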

\begin{proof}
Observe that all the non-diagonal entries of the matrix $A(h_{\textnormal{eq}})$ are nonnegative. Such matrices are sometimes referred to as Metzler-Leontief matrices in 
mathematical economics (see Section 2.3 in Seneta \cite{Seneta}). Their important property is that they can be transformed to a nonnegative matrix by adding a constant multiple of the identity matrix. This allows extensions of the Perron-Frobenius type results to such matrices. 

Note that the matrix $A(h_{\textnormal{eq}})$ is irreducible (part (C) of Assumption \ref{mainassumptionsonr}) and 
has $0$ as an eigenvalue with $h_{\textnormal{eq}}$ as a right eigenvector (see \eqref{eigenvalue0}). Theorem 2.6 in \cite{Seneta} proves parts (A),(B) and (C) of this lemma.

We now prove parts (D) and (E). Let $P$ and its inverse $P^{-1}$ be the matrices in $\mathbb{M}(q,q)$ given by
\begin{align*}
P= \left[
\begin{array}{cc}
I_{q-1} & \bar{1}_{q-1} \\
\bar{0}^T_{q-1} & 1 \\
\end{array}
\right] 
\hspace{5pt} \textrm{ and } \hspace{5pt} 
P^{-1}= \left[
\begin{array}{cc}
I_{q-1} & -\bar{1}_{q-1} \\
\bar{0}^T_{q-1} & 1 \\
\end{array}
\right].
\end{align*}
Observe that $\bar{1}^T_q G(h_{\textnormal{eq}}) = \bar{0}_q$ and 
\begin{align}
\label{relationofgandgbar}
P^T G(h_{\textnormal{eq}}) (P^T)^{-1}= \left[
\begin{array}{cc}
\bar{G}(h_{\textnormal{eq}}) & v \\
\bar{0}^T_{q-1} & 0 \\
\end{array}
\right], 
\end{align}
where $v$ is some vector in $\R^{q-1}$. Let  $L = \textrm{Diag}(h_{\textnormal{eq}})$ and $Q= P^{-1} L^{-1}$. 
Note that $Q h_{\textnormal{eq}} = P^{-1} L^{-1} h_{\textnormal{eq}} = P^{-1} \bar{1}_q = \bar{0}_q$ and hence
\begin{align*}
 Q \left( I_q - \frac{h_{\textnormal{eq}}}{\langle \bar{1}_q , h_{\textnormal{eq}} \rangle } \bar{1}^T_q \right) &= Q - \frac{1}{\langle \bar{1}_q , h_{\textnormal{eq}} \rangle } [ Q  h_{\textnormal{eq}} ] \bar{1}^T_q  = Q.
\end{align*}
This shows that $Q G(h_{\textnormal{eq}}) Q^{-1}  = Q A(h_{\textnormal{eq}}) Q^{-1}$. Hence the matrices $G(h_{\textnormal{eq}})$ and $A(h_{\textnormal{eq}})$ are similar and have the same eigenvalues. 
Part (B) of this lemma and \eqref{relationofgandgbar} imply that the matrix $\bar{G}(h_{\textnormal{eq}})$ is stable. This completes the proof of this lemma.
\end{proof}


Let $\hat{A}$, $\hat{\theta}$, $\hat{\psi}$ and $\hat{U}_{\textnormal{eq}}$ be as defined in Section \ref{section:pdesolution} (just prior to Proposition \ref{prop:extension}).
\begin{lemma}
\label{lemma:matrixpsi}
Fix a $t_0 > 0$. For $(h,t )\in \hat{U}_{\textnormal{eq}} \times [0,t_0]$ consider the following matrix equation 
\begin{align}
\label{matrixvaluedequation}
\Phi(h,t,t_0) = I_q + \int_{t}^{t_0} \hat{A}^T(\hat{\psi}(h,u)) \Phi(h,u,t_0)du, 
\end{align}
where $I_q$ is the $q\times q$ identity matrix. 
This equation has a unique solution in $C^2(\hat{U}_{\textnormal{eq}} \times [0,t_0], \mathbb{M}_\R(q,q))$ that satisfies the following 
for any $h \in \hat{U}_{\textnormal{eq}}$, $s \geq 0$ and $0\leq t \leq t_0$.
\begin{itemize}
\item[(A)] If $s \in [t,t_0]$ then $\Phi(h,t,t_0) =\Phi(h,t,s) \Phi(h,s,t_0).$
\item[(B)] For any $s \geq 0$, $\Phi( \hat{\psi}(h,s),t, t_0) = \Phi(h,t+s, t_0+s)$.
\item[(C)] If $v_0 \in \R^q_{*}$ then $\Phi(h,t,t_0) v_0 \in \R^q_{*}$. 
\end{itemize}
\end{lemma}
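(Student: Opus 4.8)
The plan is to read the matrix integral equation \eqref{matrixvaluedequation} columnwise as a backward-in-time linear ODE and deduce everything from uniqueness of solutions together with the sign structure of the matrices $\hat A(\cdot)$. First I would record existence, uniqueness and regularity. Differentiating \eqref{matrixvaluedequation} in $t$ shows that, for fixed $h$ and $t_0$, the object $\Phi(h,\cdot,t_0)$ is exactly the solution of the terminal value problem $\partial_t \Phi(h,t,t_0) = -\hat A^T(\hat\psi(h,t))\Phi(h,t,t_0)$ with $\Phi(h,t_0,t_0)=I_q$. Since $\hat\theta\in C^2(\R^q,\R^q)$ is bounded the flow $\hat\psi$ is globally defined and lies in $C^2(\R^q\times\R_+,O)$, and since $\hat\beta_{ij},\hat\rho_i\in C^2$ the map $(h,u)\mapsto\hat A^T(\hat\psi(h,u))$ is $C^2$; for each fixed $h$ it is continuous, hence bounded, on the compact set $\{h\}\times[0,t_0]$. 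Standard linear ODE theory then gives a unique solution on $[0,t_0]$, and the theorem on $C^2$-dependence of ODE solutions on parameters and initial data yields $\Phi\in C^2(\hat U_{\textnormal{eq}}\times[0,t_0],\mathbb{M}_\R(q,q))$. The same construction with $t_0$ replaced by an arbitrary terminal time $s$ defines $\Phi(h,\cdot,s)$, which parts (A) and (B) need.

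Parts (A) and (B) I would handle purely by uniqueness. For (A), with $h$ and $s\le t_0$ fixed, both $\tau\mapsto\Phi(h,\tau,s)\Phi(h,s,t_0)$ and $\tau\mapsto\Phi(h,\tau,t_0)$ solve $\partial_\tau X = -\hat A^T(\hat\psi(h,\tau))X$ on $[t,s]$ and take the common value $\Phi(h,s,t_0)$ at $\tau=s$ (using $\Phi(h,s,s)=I_q$); evaluating the common solution at $\tau=t$ gives (A). For (B), fix $h$ and $s\ge0$; since $\hat U_{\textnormal{eq}}$ is $\hat\psi$-invariant we have $\hat\psi(h,s)\in\hat U_{\textnormal{eq}}$, and by the semigroup property $\hat\psi(\hat\psi(h,s),u)=\hat\psi(h,u+s)$ the map $\tau\mapsto\Phi(h,\tau+s,t_0+s)$ solves $\partial_\tau X=-\hat A^T(\hat\psi(\hat\psi(h,s),\tau))X$ with value $I_q$ at $\tau=t_0$, which is precisely the terminal value problem defining $\tau\mapsto\Phi(\hat\psi(h,s),\tau,t_0)$; uniqueness gives (B).

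For part (C) the relevant input is the Metzler structure of $\hat A$. Fix $h\in\hat U_{\textnormal{eq}}$, $t_0>0$ and $v_0\in\R^q_*$; put $w(t)=\Phi(h,t,t_0)v_0$, so $\dot w=-\hat A^T(\hat\psi(h,t))w$ and $w(t_0)=v_0$. Reversing time by $\tilde w(\tau)=w(t_0-\tau)$ and $M(\tau)=\hat A^T(\hat\psi(h,t_0-\tau))$ gives $\dot{\tilde w}=M(\tau)\tilde w$ with $\tilde w(0)=v_0$; by \eqref{defninteractionmatrix} (with $\beta_{ij},\rho_i$ replaced by $\hat\beta_{ij},\hat\rho_i$) every off-diagonal entry of $\hat A(\cdot)$, hence of its transpose $\hat A^T(\cdot)$ and of $M(\tau)$, is nonnegative, and the entries of $M(\tau)$ are bounded uniformly in $\tau\in[0,t_0]$ because $\hat\psi(\{h\}\times[0,t_0])$ is compact. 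I would then pick $c>0$ with $P(\tau):=M(\tau)+cI_q\ge0$ entrywise for all such $\tau$ and set $u(\tau)=e^{c\tau}\tilde w(\tau)$, so that $u(\tau)=v_0+\int_0^\tau P(\sigma)u(\sigma)\,d\sigma$; the Picard iterates of this integral equation all dominate $v_0$ entrywise (induction, using $P\ge0$) and converge uniformly to $u$, so $u(\tau)\ge v_0>\bar{0}_q$ entrywise, whence $\tilde w(\tau)=e^{-c\tau}u(\tau)\in\R^q_*$ and therefore $\Phi(h,t,t_0)v_0=w(t)=\tilde w(t_0-t)\in\R^q_*$ for all $t\in[0,t_0]$, proving (C).

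I expect (C) to be the only step that needs anything beyond generic linear ODE theory; the point to be careful about there is that reversing the interval $[0,t_0]$ turns the backward evolution into a forward one, for which nonnegativity — and then, after the $e^{c\tau}$ rescaling that absorbs the possibly large negative diagonal of $M(\tau)$, strict positivity — is preserved. Everything else is bookkeeping with the defining ODE and the semigroup property of $\hat\psi$.
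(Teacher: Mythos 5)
Your proof is correct and follows essentially the same route as the paper: read \eqref{matrixvaluedequation} as a backward linear ODE and use uniqueness and smooth dependence for existence, regularity, (A) and (B), and for (C) perform the same time reversal combined with the shift by $c\,I_q$ that makes the coefficient matrix entrywise nonnegative. The only difference is that you close the positivity step in (C) by Picard iteration, which in fact supplies a justification the paper's proof leaves implicit (its claim that $dL(t)/dt \in \R^q_+$ already presupposes $L(t)\in\R^q_+$).
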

\begin{proof}
The function $\hat{\psi}$ is in $C^2(\R^q \times \R_+, \R^q)$ and the matrix-valued function $(h,t) \mapsto \hat{A}^T(\hat{\psi}(h,t))$ is in 
$C^2(\R^q \times \R_+, \mathbb{M}(q,q))$. Standard existence and uniqueness results for ordinary differential equations guarantee that there is a unique solution for 
\eqref{matrixvaluedequation} in the class $C^2(\hat{U}_{\textnormal{eq}} \times [0,t_0], \mathbb{M}_\R(q,q))$.

Part (A) of the lemma is just the Chapman-Kolmogorov property (see Proposition 2.12 in Chicone \cite{Chicone}). 
Note that due to the semigroup property for $\hat{\psi}$ (similar to \eqref{semigroup_property}) both $\Phi( \hat{\psi}(h,s),t, t_0)$ and 
$\Phi(h,t+s, t_0+s)$ satisfy the same equation for $t \in [0,t_0]$. Hence by uniqueness of solutions, part (B) is immediate.

We now prove part (C). Note that only the diagonal elements of the matrix $\hat{A}^T( \hat{\psi}(h,t))$ can be negative. For $t \in [0,t_0]$ let 
\[ c(t) = -\min_{1\leq i \leq q} \hat{A}_{ii}(\hat{\psi}(h,t)).\]
Fix a $v_0 \in \R^q_{*}$ and define 
\begin{align*}
L(t) = \exp \left( \int_{t_0 - t}^{t_0} c(s)ds \right) \Phi(h,t_0- t,t_0) v_0.  
\end{align*}
Then
\begin{align*}
\frac{d L(t)}{d t} =  \left( c(t_0 -t) I_{q} + \hat{A}^T(\hat{\psi}(h,t_0-t)) \right)L(t).
\end{align*}
But the matrix $\left( c(t_0 -t) I_{q} + \hat{A}^T(\hat{\psi}(h,t_0-t)) \right)$ has all entries positive for any $t \in [0,t_0]$. 
Since $L(0) = v_0 \in \R^q_*$, for any $t \in [0,t_0]$ we have 
$d L(t)/d t \in \R^q_+.$
Therefore $L(t) \in \R^q_{*}$ and this proves part (C). 
\end{proof}


\bibliographystyle{amsplain}
\bibliography{references}

\begin{thebibliography}{10}

\bibitem{Alee}
W.~Allee.
\newblock {\em Animal aggregations : A study in general sociology}.
\newblock University of Chicago Press, Chicago, USA, 1931.

\bibitem{AAWW}
S.~J. Altschuler, S.~B. Angenent, Y.~Wang, and L.~F. Wu.
\newblock On the spontaneous emergence of cell polarity.
\newblock {\em Nature}, 454:886--889, 2008.

\bibitem{Ball}
K.~Ball, T.~G. Kurtz, L.~Popovic, and G.~Rempala.
\newblock Asymptotic analysis of multiscale approximations to reaction
  networks.
\newblock {\em The Annals of Applied Probability}, 16(4):1925--1961, 2006.

\bibitem{AAWWref9}
A.~Butty, N.~Perrinjaquet, A.~Petit, M.~Jaquenoud, J.~Segall, K.~Hofmann,
  C.~Zwahlen, and M.~Peter.
\newblock A positive feedback loop stabilizes the guanine-nucleotide exchange
  factor cdc24 at sites of polarization.
\newblock {\em EMBO Journal}, 21:1565--1576, 2002.

\bibitem{Petzold}
Y.~Cao, D.~T. Gillespie, and L.~R. Petzold.
\newblock {The slow-scale stochastic simulation algorithm}.
\newblock {\em The Journal of Chemical Physics}, 122(1), Jan. 2005.

\bibitem{Chicone}
C.~Chicone.
\newblock {\em Ordinary differential equations with applications}, volume~34 of
  {\em Texts in Applied Mathematics}.
\newblock Springer-Verlag, New York, 1999.

\bibitem{DawsonEcole}
D.~A. Dawson.
\newblock {\em \'{E}cole d'\'{E}t\'e de {P}robabilit\'es de {S}aint-{F}lour
  {XXI}---1991}, volume 1541 of {\em Lecture Notes in Mathematics}.
\newblock Springer-Verlag, Berlin, 1993.
\newblock Papers from the school held in Saint-Flour, August 18--September 4,
  1991, Edited by P. L. Hennequin.

\bibitem{DKgen}
P.~Donnelly and T.~G. Kurtz.
\newblock Genealogical processes for {F}leming-{V}iot models with selection and
  recombination.
\newblock {\em The Annals of Applied Probability}, 9(4):1091--1148, 1999.

\bibitem{DK99}
P.~Donnelly and T.~G. Kurtz.
\newblock Particle representations for measure-valued population models.
\newblock {\em The Annals of Probability}, 27(1):166--205, 1999.

\bibitem{DN}
D.~G. Drubin and W.~J. Nelson.
\newblock Origins of cell polarity.
\newblock {\em Cell}, 84:335--344, 1996.

\bibitem{EK81}
S.~N. Ethier and T.~G. Kurtz.
\newblock The infinitely-many-neutral-alleles diffusion model.
\newblock {\em Advances in Applied Probability}, 13(3):429--452, 1981.

\bibitem{EK}
S.~N. Ethier and T.~G. Kurtz.
\newblock {\em Markov processes : {C}haracterization and {C}onvergence}.
\newblock Wiley Series in Probability and Mathematical Statistics: Probability
  and Mathematical Statistics. John Wiley \& Sons Inc., New York, 1986.

\bibitem{EK93}
S.~N. Ethier and T.~G. Kurtz.
\newblock {F}leming-{V}iot processes in population genetics.
\newblock {\em Siam {J}ournal on {C}ontrol and {O}ptimization}, 31(2):345--386,
  1993.

\bibitem{EwensBook}
W.~J. Ewens.
\newblock {\em Mathematical population genetics. {I}}, volume~27 of {\em
  Interdisciplinary Applied Mathematics}.
\newblock Springer-Verlag, New York, second edition, 2004.

\bibitem{FengPoisson}
S.~Feng.
\newblock {\em The {P}oisson-{D}irichlet distribution and related topics}.
\newblock Probability and its Applications (New York). Springer, Heidelberg,
  2010.
\newblock Models and asymptotic behaviors.

\bibitem{rdpde1}
P.~C. Fife.
\newblock {\em Mathematical aspects of reacting and diffusing systems},
  volume~28 of {\em Lecture Notes in Biomathematics}.
\newblock Springer-Verlag, Berlin, 1979.

\bibitem{FV}
W.~Fleming and M.~Viot.
\newblock Some measure-valued {M}arkov processes in population genetics theory.
\newblock {\em Indiana University Mathematics Journal}, 28:817--843, 1979.

\bibitem{AAWWref3}
A.~Gierer and H.~Meinhardt.
\newblock A theory of biological pattern formation.
\newblock {\em Kybernetik}, 12:30--39, 1972.

\bibitem{GUPTA}
A.~Gupta.
\newblock Stochastic model for cell polarity.
\newblock {\em The Annals of Applied Probability}, 22(2):827--859, 2012.

\bibitem{AAWWref14}
J.~E. Irazoqui, A.~S. Gladfelter, and D.~J. Lew.
\newblock Scaffold-mediated symmetry breaking by cdc42p.
\newblock {\em Nature Cell Biology}, 5:1062--1070, 2003.

\bibitem{Joffe}
A.~Joffe and M.~M{\'e}tivier.
\newblock Weak convergence of sequences of semimartingales with applications to
  multitype branching processes.
\newblock {\em Advances in Applied Probability}, 18(1):20--65, 1986.

\bibitem{HWKang}
H.-W. Kang and T.~G. Kurtz.
\newblock Separation of time-scales and model reduction for stochastic reaction
  networks.
\newblock {\em The Annals of Applied Probability (to appear)}, 2012.

\bibitem{kat}
G.~S. Katzenberger.
\newblock Solutions of a stochastic differential equation forced onto a
  manifold by a large drift.
\newblock {\em The Annals of Probability}, 19(4):1587--1628, 1991.

\bibitem{Khalil}
H.~K. Khalil.
\newblock {\em Nonlinear systems}.
\newblock Macmillan Publishing Company, New York, 1992.

\bibitem{kimura}
M.~Kimura.
\newblock Solution of a process of random genetic drift with a continuous
  model.
\newblock {\em Proceedings of the National Academy of Sciences},
  41(3):144--150, 1955.

\bibitem{KC64}
M.~Kimura and J.~Crow.
\newblock The number of alleles that can be maintained in a finite population.
\newblock {\em Genetics}, 49:725--738, 1964.

\bibitem{King75}
J.~F.~C. Kingman.
\newblock Random discrete distribution.
\newblock {\em Journal of the Royal Statistical Society. Series B.}, 37:1--22,
  1975.
\newblock With a discussion by S. J. Taylor, A. G. Hawkes, A. M. Walker, D. R.
  Cox, A. F. M. Smith, B. M. Hill, P. J. Burville, T. Leonard and a reply by
  the author.

\bibitem{LotkaBook}
A.~Lotka.
\newblock {\em Elements of Physical Biology}.
\newblock The Williams and Watkins company, Baltimore, 1925.

\bibitem{Moran}
P.~A.~P. Moran.
\newblock Random processes in genetics.
\newblock {\em Mathematical Proceedings of the Cambridge Philosophical
  Society}, 54(01):60--71, 1958.

\bibitem{Nisbet}
R.~M. Nisbet and W.~S.~C. Gurney.
\newblock {\em Modeling fluctuating populations}.
\newblock Wiley, 1982.

\bibitem{Oelschlager}
K.~Oelschl{\"a}ger.
\newblock On the derivation of reaction-diffusion equations as limit dynamics
  of systems of moderately interacting stochastic processes.
\newblock {\em Probability Theory and Related Fields}, 82(4):565--586, 1989.

\bibitem{Seneta}
E.~Seneta.
\newblock {\em Non-negative matrices and {M}arkov chains}.
\newblock Springer Series in Statistics. Springer, New York, 2006.

\bibitem{AAWWref7}
M.~Sohrmann and M.~Peter.
\newblock Polarizing without a c(l)ue.
\newblock {\em Trends Cell Biology}, 13:526--533, 2003.

\bibitem{Takaku}
T.~Takaku, K.~Ogura, H.~Kumeta, N.~Yoshida, and F.~Inagaki.
\newblock Solution structure of a novel cdc42 binding module of bem1 and its
  interaction with ste20 and cdc42.
\newblock {\em Journal of Biological Chemistry}, 285(25):19346--19353, 2010.

\bibitem{Thieme}
H.~R. Thieme.
\newblock {\em Mathematics in population biology}.
\newblock Princeton Series in Theoretical and Computational Biology. Princeton
  University Press, Princeton, NJ, 2003.

\bibitem{Varah}
J.~M. Varah.
\newblock A lower bound for the smallest singular value of a matrix.
\newblock {\em Linear Algebra and Applications}, 11:3--5, 1975.

\bibitem{Verhulst}
P.~Verhulst.
\newblock Notice sur la loi que la population poursuit dans son accroissement.
\newblock {\em Correspondance Math\'{e}matique et Physique}, 10:113--121, 1838.

\bibitem{Volterra}
V.~Volterra.
\newblock Fluctuations in the abundance of a species considered mathematically.
\newblock {\em Nature}, 118:558--560, 1926.

\bibitem{Weiner}
O.~Weiner, P.~Neilsen, G.~Prestwich, M.~Kirschner, L.~Cantley, and H.~Bourne.
\newblock A ptdinsp(3)- and rho gtpase-mediated positive feedback loop
  regulates neutrophil polarity.
\newblock {\em Nature Cell Biology}, 4(5):509--13, 2002.

\bibitem{Wright}
S.~Wright.
\newblock Evolution in {M}endelian populations.
\newblock {\em Genetics}, 16(2):97--159, 1931.

\end{thebibliography}


\end{document}